\theoremstyle{plain}
\newtheorem{theorem}{Theorem}
\newtheorem{lemma}[theorem]{Lemma}
\newtheorem{proposition}[theorem]{Proposition}
\newtheorem{corollary}[theorem]{Corollary}
\newtheorem{setup}[theorem]{Setup}
\theoremstyle{definition}
\newtheorem{definition}[theorem]{Definition}
\newtheorem{remark}[theorem]{Remark}
\newtheorem{example}[theorem]{Example}
\numberwithin{theorem}{section}
\numberwithin{equation}{section} 
\renewcommand{\(}{\left (}
\renewcommand{\)}{\right )}
\newcommand{\set}[1]{ \left \{ #1 \right \} }
\newcommand{\NN}{\mathbb{N}}
\newcommand{\ZZ}{\mathbb{Z}}
\newcommand{\QQ}{\mathbb{Q}}
\newcommand{\RR}{\mathbb{R}}
\newcommand{\FF}{\mathbb{F}}
\newcommand{\kk}{\Bbbk}
\newcommand{\m}{\mathfrak{m}}
\newcommand{\n}{\mathfrak{n}}
\newcommand{\cone}{\operatorname{cone}}
\newcommand{\vol}{\operatorname{vol}}
\newcommand{\T}{T_{\bullet}}
\newcommand{\U}{U_{\bullet}}
\newcommand{\interior}[1]{{#1}^{\circ}}
\newcommand{\boundary}[1]{\partial{#1}}
\newcommand{\closure}[1]{\overline{#1}}
\newcommand{\vv}[1]{\mathbf{#1}} 
\newcommand{\vs}[1]{#1} 
\newcommand{\iprod}[2]{ \langle {#1}, {#2} \rangle}
\newcommand{\size}[1]{\#{#1}}
\newcommand{\idealm}{\mathfrak{m}}
\newcommand{\idealp}{\mathfrak{p}}
\newcommand{\idealb}{\mathfrak{b}}
\newcommand*{\longhookrightarrow}{\ensuremath{\lhook\joinrel\joinrel\relbar\joinrel\rightarrow}}
\renewcommand*{\longrightarrow}{\ensuremath{\relbar\joinrel\relbar\joinrel\rightarrow}}
\newcommand*{\longonto}{\ensuremath{\relbar\joinrel\twoheadrightarrow}}
\begin{document}
\title{Local Okounkov bodies and limits in prime characteristic}
\author{Daniel J. Hern\'andez and Jack Jeffries}
\thanks{Hern\'andez was supported in part by the NSF Postdoctoral Research Fellowship DMS \#1304250 and NSF grant DMS \#1600702.}
\thanks{Jeffries was supported in part by the NSF Postdoctoral Research Fellowship DMS \#1606353}
\maketitle

\begin{abstract}
This article is concerned with the asymptotic behavior of certain sequences of ideals in rings of prime characteristic.  These sequences, which we call $p$-families of ideals,  are ubiquitous in prime characteristic commutative algebra (e.g., they occur naturally in the theories of tight closure, Hilbert-Kunz multiplicity, and $F$-signature). We associate to each $p$-family of ideals an object in Euclidean space that is analogous to the Newton-Okounkov body of a graded family of ideals, which we call a $p$-body.  Generalizing the methods used to establish volume formulas for the Hilbert-Kunz multiplicity and $F$-signature of semigroup rings, we relate the volume of a $p$-body to a certain asymptotic invariant determined by the corresponding $p$-family of ideals.  We apply these methods to obtain new existence results for limits in positive characteristic, an analogue of the Brunn-Minkowski theorem for Hilbert-Kunz multiplicity, and a uniformity result concerning the positivity of a $p$-family. 
\end{abstract}

\section{Introduction}


This article is concerned with limits of certain sequences indexed by the powers of a prime integer $p>0$, typically the characteristic of some fixed ambient ring.  To simplify notation, $q$ will often denote an integral power of this prime integer;  explicitly, $q=p^e$ for some nonnegative integer $e$.  Thus, a sequence of rational numbers $\{ \lambda_{p^e} \}_{e=0}^{\infty}$ is often written as $\{ \lambda_{q} \}_{q=1}^{\infty}$, and the limit $\lim \limits_{e \to \infty} \lambda_{p^e}$ as $\lim \limits_{q \to \infty} \lambda_q$.  

\subsection{Numerical limits in prime characteristic}\label{limitscharp}
Let $(R, \idealm)$ be a reduced local ring of prime characteristic $p>0$ and dimension~$d$, and let $\ell_R(M) = \ell(M)$ denote the length of an $R$-module $M$.   If $q$ is a power of $p$, then $R^q$ is the subring of $R$ consisting of $q$-th powers.  

In \cite{Kunz69}, Kunz showed that the following conditions are equivalent:
\begin{enumerate}
\item $R$ is a regular local ring.
\item $\ell_R(R/\idealm^{[q]}) = q^d$ for some (equivalently, every) $q$ a power of $p$.
\item $R$ is flat over $R^q$ for some (equivalently, every) $q$ a power of $p$.
\end{enumerate}

This theorem has inspired many numerical approaches to measuring the failure of a ring of positive characteristic to be regular. One of the first such avenues was initiated by Kunz \cite{Kunz76}, who investigated the function  $q \mapsto \ell_R(R/I^{[q]})$, where $q$ is a power of $p$ and $I$ is some fixed $\m$-primary ideal of $R$, now called the \emph{Hilbert-Kunz function} of $I$. While the definition of this function is analogous to that of the Hilbert-Samuel function of $I$ given by $t \mapsto \ell_R(R/I^{t})$, which agrees with a polynomial for large values of $t$, the Hilbert-Kunz function exhibits much more subtle behavior \cite{HanMonsky93}.

Nonetheless, Monsky showed in \cite{Monsky83} that the limit
\begin{equation}\label{HK}
 e_{HK}(I,R) := \lim_{e \rightarrow \infty} \frac{\ell_R(R/I^{[q]})}{q^d} 
\end{equation}
exists for any $\idealm$-primary ideal $I$. As with the Hilbert-Kunz function, this \emph{Hilbert-Kunz multiplicity} is a more subtle analogue of its characteristic-free counterpart; e.g., it may take irrational values \cite{Brenner14}.

Another similar approach to quantifying singularity using the Frobenius map is given by counting for each $q$ a power of the characteristic, the maximal rank $a_q(R)$ of an $R^{q}$-free summand of $R$. Under the simplifying assumption that the residue field of $R$ is perfect,  Kunz' theorem tells us that $a_q(R)$ is equal to the $q^d$, the rank of $R$ as an $R^{q}$-module,  if and only if $R$ is regular.

The limit
\begin{equation}\label{signature} s(R) := \lim_{ q \rightarrow \infty} \frac{a_q(R)}{q^d} 
\end{equation}
is studied in \cite{SmithVDB97}, and is defined in \cite{HunekeLeuschke02} as the \emph{$F$-signature} of $R$, provided the limit exists. Subsequent works showed that this number captures sensitive information about the ring, e.g., $s(R)$ (as a limit superior) is equal to 1 if and only if $R$ is regular \cite{HunekeLeuschke02}, is greater than zero if and only if $R$ is strongly $F$-regular \cite{AberbachLeuschke03}, and  is equal to the infimum of differences of Hilbert-Kunz multiplicities of a pair of nested $\idealm$-primary ideals \cite{WatanabeYoshida04, Yao06, PolstraTucker16}.

The $F$-signature of affine semigroup rings was studied in \cite{Singh05, WatanabeYoshida04, VonKorff11}. It was shown in these cases that the limit exists, is rational, and is equal to the volume of a certain rational polytope, the \emph{Watanabe-Yoshida-Von Korff polytope} constructed from $R$.

The existence of the limit in the definition of $s(R)$ was established by Tucker in 2012 \cite{Tucker12}. His approach uses a characterization of Yao \cite{Yao06} of the term $a_q(R)$ as the length of cyclic modules $R/I_q$ (see Section~\ref{applications}).
Then, using uniform estimates on Hilbert-Kunz multiplicity, it is shown that the sequence $q^{-d} \cdot \ell_R(R / I_q)$ tends to a limit if and only if the sequence $q^{-d} \cdot e_{HK}(I_q, R)$ does, and that the second sequence is nonincreasing and bounded below by zero. A simplified treatment of this proof, utilizing a lemma of Dutta, can be found in \cite{Huneke13}. These techniques have inspired further results, e.g., concerning the upper semicontinuity of Hilbert-Kunz multiplicity \cite{Smirnov14}, the lower semicontinuity of $F$-signature \cite{Polstra15}, and relations between Hilbert-Kunz multiplicity and $F$-signature \cite{PolstraTucker16}.

Another related limit, associated to an \emph{arbitrary} ideal $I$ of $R$ is the \emph{generalized Hilbert-Kunz multiplicity}, introduced by  Epstein and Yao~\cite{EpsteinYao}:
\begin{equation}\label{intro-gHK}
e_{gHK}(I,R) := \lim_{q \rightarrow \infty} \frac{\ell_R(H^0_\idealm(R/I^{[q]}))}{q} \, , 
\end{equation}
provided the limit exists. This multiplicity is studied further by Dao and Smirnov \cite{DaoSmirnov14}, who show the limit above exists for all ideals in a complete intersection with an isolated singularity. They also show that, in such rings, vanishing of $e_{gHK}(I,R)$ characterizes ideals $I$ of finite projective dimension. This generalized multiplicity and related analogues are utilized by Brenner \cite{Brenner14} in his proof of the existence of irrational Hilbert-Kunz multiplicities. Vraciu \cite{Vraciu16} proved that, under the assumption that \emph{every} ideal of $R$ satisfies Huneke's condition (LC) (see Section~\ref{applications}), the generalized Hilbert-Kunz multiplicity exists. As this hypothesis on the ring is poorly understood, it is remains an open question whether this limit exists in general, and it is of interest to find other conditions under  which this limit exists.

\subsection{Numerical limits via Okounkov bodies}\label{limitsokounkovbodies} Another recent development concerning limits in algebra and algebraic geometry is the introduction of the method of Okounkov bodies. In \cite{Okounkov96} and \cite{Okounkov03}, Okounkov established a method for associating a convex body, now called an \emph{Okounkov body}, to the complete linear series $H^0(X,\mathcal{O}_X(mD))$ of an ample divisor $D$ on a smooth projective variety $X$. This geometric structure is then employed to establish many properties regarding the growth of these linear series, including the existence of limits and log-concavity. This construction of Okounkov bodies is generalized to incomplete linear series on big divisors with further applications in \cite{LazarsfeldMustata09}.

Similar constructions have been employed to study the growth of colengths of \emph{graded families of ideals}, i.e., sequences of ideals 
\[I_{\bullet} = \{ I_n \}_{n=1}^{\infty} \quad \text{with} \quad I_a \cdot I_b \subseteq I_{a+b} \ \text{for all} \ a, b \in \NN. \] 
In work of Ein, Lazarsfeld, and Smith \cite{EinLazarsfeldSmith03} and Musta\c{t}\u{a} \cite{Mustata02}, the limit
\begin{equation}\label{gradedfamily}
 \lim_{n\rightarrow\infty} \frac{\ell_R(R/I_n)}{n^{d}} \end{equation}
is studied and is shown to exist for every graded family $I_{\bullet}$ of $\idealm$-primary ideals in a regular local ring $(R, \idealm)$  containing a field. In \cite{KavehKhovanskii12}, Kaveh and Khovanskii apply valuation theory to construct convex bodies associated to graded families of ideals and graded algebras, as well as giving an alternative construction of Okounkov bodies for linear systems.

The approach of using valuation theory to constuct Okounkov bodies to study numerical limits measuring the growth of graded families in local rings was greatly extended in work of Cutkosky \cite{Cutkosky13, Cutkosky14}, who showed that, for any local ring $(R, \idealm)$, if the $R$-module dimension of the nilradical of $\hat{R}$ has dimension strictly less than that of $R$, then the limit (\ref{gradedfamily}) exists for \emph{every} graded family $I_\bullet$ of $\m$-primary ideals. Moreover, this is the strongest possible result, in the sense that if this limit exists for every such graded family, then the inequality on dimensions holds. A key application of this work is the existence of the $\varepsilon$-multiplicity of Katz and Validashti \cite{KatzValidashti10} and Ulrich and Validashti \cite{UlrichValidashti11} for all ideals in an analytically unramified ring.

Roughly speaking, the general approach of Cutkosky's existence theorem \cite{Cutkosky13, Cutkosky14} may be outlined as follows: First reduce to the case of a complete local domain. Then:\begin{enumerate}
\item\label{step1intro} Construct a suitable valuation on the ring $R$.
\item Observe that the values associated to elements in a graded family admit a combinatorial structure (semigroups).  
\item Compute the relevant $R$-module lengths by counting certain points in this combinatorial structure.
\item Show that the rate of growth of the number of elements in the combinatorial structure is equal to the Euclidean volume of a region (Okounkov bodies).
\end{enumerate}
The  steps above also form a rough outline for the existence and subadditivity results in \cite{Okounkov96, Okounkov03, LazarsfeldMustata09, KavehKhovanskii12}. One major difference between Okounkov bodies in the geometric setting of linear series on smooth varieties and in the local setting of graded families on possibly singular domains lies the first step. 
  In particular, the standard construction of such a valuation  from a flag of smooth subvarieties as in \cite{LazarsfeldMustata09} only makes sense in the case $R$ is regular.

\subsection{Results in the present work}

The motivating idea behind this paper is to study numerical limits in positive characteristic by the technique of expressing such limits as Euclidean volumes as in Cutkosky's proof; that is, to apply the methods outlined in Subsection~\ref{limitsokounkovbodies} to the setting of Subsection~\ref{limitscharp}. One immediate obstacle is that the limits considered in Equations~ \eqref{HK},~\eqref{signature}, and~\eqref{intro-gHK} do not come as growth of colengths of graded families of ideals. However, the chains of ideals that do appear admit another type of structure, identified in the work of Tucker~\cite{Tucker12}:

\begin{definition} A \emph{$p$-family of ideals} is a sequence of ideals
\[I_{\bullet} = \{ I_{p^e} \}_{e=1}^{\infty} \quad \text{with} \quad I_q^{[p]}  \subseteq I_{pq} \ \text{for all} \ q \text{ a power of $p$}.\] 
\end{definition}



Examples of $p$-families of ideals include the sequence of Frobenius powers of an ideal, the sequence obtained by taking the saturation of the Frobenius powers of an ideal, and the sequence of ideals appearing in the characterization of $F$-signature \cite[Lemma~4.4]{Tucker12}. 

Since we consider similar limits throughout, we will use a shorthand notation: If $(R,\m)$ is a local ring of characteristic $p>0$, and $I_{\bullet}$ is a $p$-family of $\m$-primary ideals of $R$, then the \emph{volume} of $I_{\bullet}$  is
\[\vol_R(I_{\bullet}) := \lim_{q\rightarrow\infty} \frac{\ell_R(R/I_q)}{q^d} \]
where $q$ varies through powers $p^e$ of the characteristic $p$.

Given this terminology, we may state our main results.

\begin{theorem}\label{existence-intro} Let $(R,\idealm)$ be a ring of prime characteristic $p>0$ and Krull dimension $d$. The limit $\vol_R(I_{\bullet})$ exists for every $p$-family $I_{\bullet}$ of $\m$-primary ideals of $R$ if and only if the dimension (as an $R$-module) of the nilradical of the completion of $R$ is less than $d$. 

Moreover, if $R$ is reduced and excellent, and $I_{\bullet}$ and $J_{\bullet}$ are $p$-families of ideals of $R$ such that $ I_q \subseteq J_q$ and  ${I_q \cap \idealm^{cq} = J_q \cap \idealm^{cq}}$ for every $q$ and some $c$ independent of $q$, then the limit \[ \lim_{q\rightarrow \infty} \frac{\ell_R(J_q/I_q)}{q^d}\] exists.
\end{theorem}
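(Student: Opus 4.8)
The plan is to deduce this from the first part of the theorem by replacing $I_\bullet$ and $J_\bullet$ with $\m$-primary $p$-families having the same successive quotients. The first thing to note is that since $R$ is reduced and excellent, its completion $\hat R$ is reduced (reducedness ascends along the completion map because an excellent local ring has geometrically reduced formal fibres), so the nilradical of $\hat R$ is zero and a fortiori has dimension less than $d$. Hence the first part of the theorem applies to $R$: the limit $\vol_R(K_\bullet)$ exists for every $p$-family $K_\bullet$ of $\m$-primary ideals of $R$.

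Next, after enlarging $c$ to a positive integer — the hypothesis $I_q \cap \m^{cq} = J_q \cap \m^{cq}$ continues to hold when $c$ is replaced by any larger integer — I would set
\[ \tilde I_q := I_q + \m^{cq}, \qquad \tilde J_q := J_q + \m^{cq}. \]
These are $\m$-primary (they contain $\m^{cq}$), and they are $p$-families: since Frobenius powers are additive over sums and $(\m^{cq})^{[p]} \subseteq (\m^{cq})^p = \m^{cpq}$, one gets $\tilde I_q^{[p]} = I_q^{[p]} + (\m^{cq})^{[p]} \subseteq I_{pq} + \m^{cpq} = \tilde I_{pq}$, and similarly for $\tilde J_\bullet$. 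The point of this particular truncation is that it leaves the quotient unchanged: since $I_q \subseteq J_q$, Dedekind's modular law gives
\[ \tilde I_q \cap J_q = (I_q + \m^{cq}) \cap J_q = I_q + (\m^{cq} \cap J_q) = I_q + (\m^{cq} \cap I_q) = I_q, \]
the third equality being the hypothesis, while $\tilde I_q + J_q = \tilde J_q$ is clear. By the second isomorphism theorem $\tilde J_q / \tilde I_q \cong J_q / I_q$, so
\[ \ell_R(J_q / I_q) = \ell_R(\tilde J_q / \tilde I_q) = \ell_R(R/\tilde I_q) - \ell_R(R / \tilde J_q), \]
all lengths being finite because $\tilde I_q$ and $\tilde J_q$ are $\m$-primary. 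Dividing by $q^d$ and letting $q \to \infty$, the right-hand side converges to $\vol_R(\tilde I_\bullet) - \vol_R(\tilde J_\bullet)$ by the previous paragraph, which gives the desired limit.

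The real content is the first part of the theorem — the existence (volume) result for $\m$-primary $p$-families, obtained through the $p$-body machinery — and once that is available the only thing requiring care here is the choice of truncating ideal. The exponent $cq$ is essentially forced: it must be linear in $q$ so that $\tilde I_\bullet$ remains a $p$-family (passing from $q$ to $pq$ rescales the exponent by exactly $p$), and it must be precisely the exponent appearing in the hypothesis so that the modular-law computation collapses $\tilde I_q \cap J_q$ back to $I_q$. I do not expect any genuine obstacle beyond verifying these compatibilities.
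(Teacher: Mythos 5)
Your reduction of the ``moreover'' clause is correct, and it is a genuinely different route from the paper's. You replace $I_\bullet,J_\bullet$ by the $\m$-primary $p$-families $\tilde I_q=I_q+\m^{cq}$, $\tilde J_q=J_q+\m^{cq}$; the modular-law computation $(I_q+\m^{cq})\cap J_q=I_q+(\m^{cq}\cap J_q)=I_q$ and the identity $(A+B)^{[p]}=A^{[p]}+B^{[p]}$, $(\m^{cq})^{[p]}\subseteq\m^{cpq}$ all check out, as does the use of excellence to get $\hat R$ reduced, so $\ell_R(J_q/I_q)/q^d$ is an exact difference of two convergent sequences. The paper instead proves the pair statement directly: for an OK domain it computes $\ell_D(J_q/I_q)$ by counting valuation values and identifies the limit as $[\kk_V:\kk]$ times the volume of the difference of the two $p$-bodies, which agree outside a truncation of the cone (Theorem~\ref{OK limit existence: T}), and then passes from a reduced excellent ring to its quotients by minimal primes (Proposition~\ref{pairs-to-reduced}, Theorem~\ref{limits-of-pairs-reduced}). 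Your route is shorter and needs only the single-family $\m$-primary existence result; what the paper's route buys in addition is the geometric description of the limit as a volume of a difference of $p$-bodies, which the bare statement does not require but which the paper reuses elsewhere.

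The genuine gap is that you prove only the last assertion. The first assertion --- the equivalence between existence of $\vol_R(I_\bullet)$ for \emph{every} $\m$-primary $p$-family and the nilradical of $\hat R$ having dimension less than $d$ --- is assumed outright (``once that is available''), and it is the bulk of the theorem. It has two halves, neither of which your proposal addresses: the existence direction, which in the paper goes through OK valuations, $p$-systems and $p$-bodies (Corollary~\ref{complete local domain is OK: C}, Theorem~\ref{plimit}, Corollary~\ref{OK limit of m-primary ideals exists: E}, Theorem~\ref{m-primary limits exist: T}), together with the reductions to the complete, reduced, and domain cases; and the necessity direction, which requires exhibiting, whenever the nilradical of $\hat R$ has dimension $d$, a $p$-family whose normalized colengths do not converge --- the paper does this (Proposition~\ref{limit DNE: P}) with $I_q=\m^q+x\m^{b_q}$, where $x^2=0$, $(0:x)$ is a minimal prime of maximal dimension, and $b_q/q$ oscillates. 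As it stands your argument is a correct and clean derivation of the second assertion from the first, not a proof of the stated theorem.
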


The following example illustrates the necessity of the hypothesis on the dimension of the nilradical of the completion of $R$.

\begin{example} 
\label{volume DNE: E}
The nilradical of $R = \FF_p \llbracket x,y \rrbracket / y^2$ is generated by the class of $y$, and so its $R$-module dimension is $1$.  Let $\{ k_q \}_{q=1}^{\infty}$ be a sequence of integers indexed by powers $q=p^e$ of $p$ with $0 \leq k_q < q$, and for every $q$ let  $I_q$ be either the ideal generated by the class of $x^q$, or by the classes of $x^q$ and $x^{k_q} y$.  Though we allow this choice to vary with $q$, the Frobenius power $I_q^{[p]}$ is always generated by the class of $x^{pq}$, and so the sequence $\{I_q\}_{q=1}^{\infty}$ is a $p$-family in $R$.   However, depending on the choice of $I_q$, the length of $R/I_q$ equals either $2q$ or $q + k_q$.  Thus, there are many $p$-families in $R$ for which the limit $\vol_R(I_{\bullet})$ does not exist.  
\end{example}

The proof of the positive part of this result follows a different method than that of \cite{Tucker12}, yielding the new existence result for pairs of $p$-families. Key corollaries of this theorem include new cases of the existence of generalized Hilbert-Kunz multiplicity, e.g., for ideals of dimension one in graded rings, and a new uniform proof of the existence of both Hilbert-Kunz multiplicity and $F$-signature. Moreover, our proof of the above theorem realizes the limit as the Euclidean volume of a region in $\RR^d$ that in the case of $F$-signature of normal semigroup rings agrees with the Watanabe-Yoshida-Von Korff polytope.

The idea behind the proof is in fact very simple and illuminating. As with Cutkosky's existence proofs, we utilize a suitable valuation, identify a natural combinatorial structure on the set of values that arise (a \emph{p-system} of semigroup ideals), and prove that the growth rate of the number of elements in the combinatorial structure is equal to the volume of a region in Euclidean space (a \emph{p-body}). The main technical mass of the paper consists in this last step, other than reductions to the complete domain case that follow along similar lines as Cutkosky's arguments.

While the regions we obtain are not convex (in contrast with Okounkov bodies arising from graded families, which are convex), they are convex relative to a cone. This structure allows us to find new relations on $p$-families and limits in positive characteristic. For example, we obtain an analogue of the Brunn-Minkowski inequality:

\begin{theorem}\label{BM-ineq} 
Let $I_\bullet$ and $J_\bullet$ be $\idealm$-primary $p$-families in a $d$-dimensional excellent local domain $(R,\idealm)$ of characteristic $p>0$. If $L_{\bullet}$ is the $p$-family whose $q$-th term is the product of  $I_q$ and $J_q$, then  
\[ \vol_R(I_{\bullet})^{1/d}+ \vol_R(J_{\bullet})^{1/d} \geq \vol_R(L_{\bullet})^{1/d}.\]
\end{theorem}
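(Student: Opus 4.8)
The plan is to run the argument through the $p$-body machinery developed earlier, using that $L_q = I_q J_q$ forces a Minkowski-sum containment of the associated semigroup ideals, and then to invoke a Brunn--Minkowski inequality for the ``convex relative to a cone'' regions that result. First I would reduce to the case where $R$ is a complete local domain: passing to $\idealm$-adic completion preserves both $\dim R$ and the lengths $\ell_R(R/I_q)$, and preserves the hypotheses (excellence gives $\hat R$ reduced and equidimensional); if $\hat R$ is not a domain one argues over its minimal primes exactly as in the reduction steps behind Theorem~\ref{existence-intro} and then recombines using the elementary inequality $\bigl(\sum_i c_i\bigr)^{1/d}\le\bigl(\sum_i a_i\bigr)^{1/d}+\bigl(\sum_i b_i\bigr)^{1/d}$, valid whenever each $c_i^{1/d}\le a_i^{1/d}+b_i^{1/d}$. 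So assume $R$ is a complete local domain, and fix a single valuation $v$ of the kind constructed in the proof of the main volume formula --- this construction depends only on $R$ --- with associated rational cone $C\subseteq\RR^d$. Applied to each of $I_\bullet$, $J_\bullet$, $L_\bullet$ this produces $p$-systems of semigroup ideals $\Gamma_q^I := v(I_q)$, etc., limit regions $\Gamma_\infty(I_\bullet)\subseteq C$, and $p$-bodies $\Delta(I_\bullet) = C\setminus\Gamma_\infty(I_\bullet)$, and the volume formula gives $\vol_R(I_\bullet) = c_0\cdot\vol_{\RR^d}\bigl(\Delta(I_\bullet)\bigr)$ with $c_0 > 0$ depending only on $v$ --- the same constant for all three families.

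The algebraic input is then immediate: since $v$ carries products to sums, $v(I_qJ_q)\supseteq v(I_q) + v(J_q)$, so $\Gamma_q^L\supseteq\Gamma_q^I + \Gamma_q^J$ for every $q$. The $p$-family condition makes the rescaled ideals $\tfrac1q\Gamma_q$ increasing in $q$, so passing to the union and then taking closures (using $\overline{A+B}\supseteq\overline{A}+\overline{B}$) gives
\[
\Gamma_\infty(L_\bullet)\ \supseteq\ \Gamma_\infty(I_\bullet) + \Gamma_\infty(J_\bullet),
\qquad\text{hence}\qquad
\Delta(L_\bullet)\ \subseteq\ C\setminus\bigl(\Gamma_\infty(I_\bullet) + \Gamma_\infty(J_\bullet)\bigr),
\]
and in particular $\vol\bigl(\Delta(L_\bullet)\bigr)\le\vol\bigl(C\setminus(\Gamma_\infty(I_\bullet)+\Gamma_\infty(J_\bullet))\bigr)$.

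The crux --- the step I expect to be the real obstacle --- is a purely convex-geometric assertion: if $\Omega_1,\Omega_2\subseteq C$ are the complements of two $p$-bodies (closed, $C$-stable ``upper sets'' whose complements in $C$ are bounded and convex relative to $C$), then
\[
\vol\bigl(C\setminus(\Omega_1+\Omega_2)\bigr)^{1/d}\ \le\ \vol\bigl(C\setminus\Omega_1\bigr)^{1/d} + \vol\bigl(C\setminus\Omega_2\bigr)^{1/d}.
\]
This is a reverse Brunn--Minkowski inequality for ``co-convex'' regions, and the difficulty is that $p$-bodies are genuinely \emph{not} convex --- already the Hilbert--Kunz $p$-body of a two-dimensional quadric cone is an $L$-shaped region --- so the classical Brunn--Minkowski theorem does not apply off the shelf. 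The plan is to use precisely the structure advertised in the introduction: a $p$-body is a (possibly infinite) union of boxes $C\cap(w-C)$, i.e. it is convex relative to $C$. After reducing to a simplicial cone (both sides of the inequality are additive under a rational subdivision of $C$), I would slice by level sets of a linear functional positive on $C$, note that the slices of a $p$-body are again ``box-convex'' regions that vary in a way compatible with Minkowski sums, and run the usual inductive / Pr\'ekopa--Leindler proof of Brunn--Minkowski on the slices; alternatively, this inequality for co-convex bodies can be cited from the convex-geometry literature.

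Granting this, chaining the displayed estimates gives $\vol\bigl(\Delta(L_\bullet)\bigr)^{1/d}\le\vol\bigl(\Delta(I_\bullet)\bigr)^{1/d} + \vol\bigl(\Delta(J_\bullet)\bigr)^{1/d}$; multiplying through by $c_0^{1/d}$ and applying the volume formula yields $\vol_R(L_\bullet)^{1/d}\le\vol_R(I_\bullet)^{1/d} + \vol_R(J_\bullet)^{1/d}$, and undoing the completion reduction recovers the statement for the original ring $R$.
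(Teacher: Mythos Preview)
Your approach matches the paper's: pass to an OK valuation, use $\nu(I_q)+\nu(J_q)\subseteq\nu(L_q)$ to obtain a Minkowski-sum containment of the associated $C$-convex regions, and then invoke a Brunn--Minkowski inequality for such regions to compare the volumes of the complements. Two places where you are working harder than necessary: first, the completion step (and the attendant minimal-prime bookkeeping) is not needed, since an excellent local domain of characteristic $p$ is already an OK domain by Corollary~\ref{complete local domain is OK: C}, so one may fix the valuation directly on $R$; second, the ``crux'' inequality you flag is precisely Theorem~\ref{brunn-minkowski-convex}, due to Kaveh and Khovanskii, and the paper simply cites it. Note also a small terminological slip: in the paper's conventions the $p$-body is your $\Gamma_\infty$, not its complement, and it is the $C$-convexity of $\Gamma_\infty$ (i.e., $\Gamma_\infty+C\subseteq\Gamma_\infty$) that drives the Kaveh--Khovanskii inequality --- the non-convexity of the complement $C\setminus\Gamma_\infty$ that worried you is not an obstacle.
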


We also obtain a uniformity result on $p$-families of ideals:

\begin{theorem}\label{nonvanishing-intro} Let $I_\bullet$ be an $\idealm$-primary $p$-family of ideals in a $d$-dimensional excellent local  domain $(R,\idealm)$ of characteristic $p>0$. Then  the limit $\vol_R(I_\bullet)$ is positive if and only if there exists a power $q_{\circ}$ of $p$ such that $I_{qq_{\circ}} \subseteq \idealm^{[q]}$ for all $q=p^e$.
\end{theorem}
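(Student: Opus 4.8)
The plan is to prove the two implications separately, in both cases first reducing, by the arguments used to establish Theorem~\ref{existence-intro}, to the case where $R$ is a complete local domain (for the completion one uses faithful flatness; since $R$ is excellent and reduced, $\widehat R$ is reduced and equidimensional of dimension $d$, and the containment $I_{qq_{\circ}}\subseteq\idealm^{[q]}$ may be checked modulo the minimal primes of $\widehat R$). Write $n$ for the minimal number of generators of $\idealm$; since any degree-$(n(q-1)+1)$ monomial in $n$ fixed generators of $\idealm$ is divisible by a $q$-th power of one of them, $\idealm^{nq}\subseteq\idealm^{[q]}$ for every $q$. Also, because $I_{\bullet}$ is $\idealm$-primary there is a single $N$ with $\idealm^{Nm}\subseteq I_{m}$ for all $m$: choosing $N$ with $\idealm^{N}\subseteq I_{p}$ and iterating $I_{pk}\supseteq I_{k}^{[p]}$ gives $I_{m}\supseteq I_{p}^{[m/p]}\supseteq(\idealm^{N})^{[m/p]}=(\idealm^{[m/p]})^{N}\supseteq\idealm^{Nm/p}$.

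\emph{Sufficiency of the condition.} Suppose $q_{\circ}$ is a power of $p$ with $I_{qq_{\circ}}\subseteq\idealm^{[q]}$ for all $q$. Then $I_{qq_{\circ}}\subseteq\idealm^{q}$, so $\ell_R(R/I_{qq_{\circ}})\geq\ell_R(R/\idealm^{q})$ and hence
\[
\frac{\ell_R(R/I_{qq_{\circ}})}{(qq_{\circ})^{d}}\;\geq\;\frac{1}{q_{\circ}^{d}}\cdot\frac{\ell_R(R/\idealm^{q})}{q^{d}}.
\]
Letting $q\to\infty$, using that $\vol_R(I_{\bullet})$ exists by Theorem~\ref{existence-intro} and that $\ell_R(R/\idealm^{q})/q^{d}\to e(\idealm,R)/d!$, we obtain $\vol_R(I_{\bullet})\geq e(\idealm,R)/(d!\,q_{\circ}^{d})>0$ when $d\geq1$; the case $d=0$ (where $R$ is a field) is trivial.

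\emph{Necessity of the condition.} For this, the substantive direction, I would argue the contrapositive using the $p$-body machinery. Fix a valuation $v$ on $R$ as in the construction underlying our volume formula, with value semigroup $\Gamma=v(R\setminus\{0\})\subseteq\NN^{d}$, cone $C=\closure{\cone(\Gamma)}$, and such that $\ell_R(R/J)=\#\bigl(\Gamma\setminus v(J)\bigr)$ for $\idealm$-primary $J$; by linear comparability of divisorial valuations on the analytically irreducible ring $R$ (Izumi's theorem), one may also arrange $\lVert v(g)\rVert\leq C_{0}\,\mathrm{ord}_{\idealm}(g)$ for a fixed constant $C_{0}$ and all $g\neq 0$. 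Suppose no $q_{\circ}$ as in the statement exists. Given any power $q_{\circ}$ of $p$, pick a power $q$ and an element $f\in I_{qq_{\circ}}\setminus\idealm^{[q]}$; then $\mathrm{ord}_{\idealm}(f)<nq$, so $\mathbf{u}:=\tfrac{1}{qq_{\circ}}v(f)$ lies in $C$ with $\lVert\mathbf{u}\rVert<C_{0}n/q_{\circ}$. For every $a\geq0$ we have $f^{p^{a}}\in I_{qq_{\circ}}^{[p^{a}]}\subseteq I_{qq_{\circ}p^{a}}$ and $v(f^{p^{a}})=p^{a}v(f)$, so, with $m:=qq_{\circ}p^{a}$, the semigroup ideal $v(I_{m})$ contains $v(f^{p^{a}})+\Gamma$; thus $\Gamma\setminus v(I_{m})$ is disjoint from $v(f^{p^{a}})+\Gamma$, and after dividing by $m$ the rescaled set lies (using $\idealm^{Nm}\subseteq I_{m}$, which keeps it in a fixed ball $B$) essentially inside $\bigl(C\setminus(\mathbf{u}+C)\bigr)\cap B$. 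Evaluating $\vol_R(I_{\bullet})=\lim_{m}\#(\Gamma\setminus v(I_{m}))/m^{d}$ along the subsequence $m=qq_{\circ}p^{a}$, $a\to\infty$, and comparing lattice-point counts with Lebesgue volume (Khovanskii's theorem on the density of group-generating subsemigroups of $\ZZ^{d}$), yields
\[
\vol_R(I_{\bullet})\;\leq\;\vol\bigl((C\setminus(\mathbf{u}+C))\cap B\bigr).
\]
Since $q_{\circ}$ was arbitrary, $\lVert\mathbf{u}\rVert<C_{0}n/q_{\circ}\to0$ with $\mathbf{u}\in C$ (so $\mathbf{u}+C\subseteq C$), and $\vol\bigl((C\setminus(\mathbf{u}+C))\cap B\bigr)\to0$ as $\mathbf{u}\to0$, we conclude $\vol_R(I_{\bullet})=0$.

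The main obstacle is that this last argument leans on the technical core of the paper: the existence of the valuation $v$ with $\ell_R(R/J)=\#(\Gamma\setminus v(J))$ together with the Izumi-type comparison, the uniform passage from lattice-point counts to Lebesgue volume (so that a set-theoretic containment of the rescaled complements bounds the limit), and the very existence of the limit $\vol_R(I_{\bullet})$ (Theorem~\ref{existence-intro}), which is what lets a bound along a subsequence of powers of $p$ control $\vol_R(I_{\bullet})$ despite $p$-families not being monotone. Secondary care is needed in the reduction to the complete-domain case (where $\widehat R$ is reduced but possibly not a domain) and in verifying that $\vol\bigl((C\setminus(\mathbf{u}+C))\cap B\bigr)\to0$ uniformly as $\mathbf{u}\to0$ within $C$.
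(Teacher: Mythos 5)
Your overall strategy is sound, and your easy direction is genuinely simpler than the paper's: the paper proves both implications with the $p$-body, whereas you deduce positivity from $I_{qq_{\circ}}\subseteq\m^{[q]}\subseteq\m^{q}$ by comparing with the Hilbert--Samuel function and invoking only the existence of $\vol_R(I_\bullet)$ --- that works. Your hard direction is in spirit a quantitative, hands-on version of what the paper does abstractly: the paper shows (Theorem~\ref{vanishing: T}) that positivity of $\vol_{\RR^d}(C\setminus\Delta)$ forces the $p$-body $\Delta$ to miss a truncation of $C$ (Lemma~\ref{volume-zero}), and then converts that truncation back into the containment $I_{qq_{\circ}}\subseteq\m^{\mu q}\subseteq\m^{[q]}$ using the distinguished vector $\vv{v}$ from Definition~\ref{OKvaluation: D}; your contrapositive, producing $f\in I_{qq_{\circ}}\setminus\m^{[q]}$ of small normalized value and pushing it forward by Frobenius, proves essentially the ``only if'' half of Lemma~\ref{volume-zero} by hand. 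Note that the counting step you defer (``comparing lattice-point counts with Lebesgue volume'') is not free: being disjoint from $v(f^{p^a})+\Gamma$ does not place the rescaled complement outside $\vv{u}+C$, because lattice points of $\vv{u}+C$ need not lie in $\vv{u}+\tfrac1m\Gamma$; one needs the Kaveh--Khovanskii approximation (Theorem~\ref{ConeTheorem: T}), i.e.\ the content of Theorem~\ref{plimit}, exactly as in the paper. Also, the Izumi-type inequality you invoke is both unnecessary and misapplied (the valuation here has value group $\ZZ^d$, so it is not divisorial): the linear comparison $\iprod{v(f)}{\vv{a}}\leq(\operatorname{ord}_{\m}(f)+1)\iprod{\vv{v}}{\vv{a}}$ is already built into the OK condition of Definition~\ref{OKvaluation: D}, and pointedness of $C$ converts it into a norm bound. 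Finally, the factor $[\kk_V:\kk]$ cannot in general be normalized to $1$, but since it is a fixed positive constant this does not affect positivity.

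The one genuine gap is your reduction to a complete local domain. The completion of an excellent local domain is reduced but need not be a domain, and your claim that ``the containment $I_{qq_{\circ}}\subseteq\m^{[q]}$ may be checked modulo the minimal primes of $\widehat R$'' fails in both respects you need it: positivity of the volume only gives, via Proposition~\ref{volume-as-sum: P}, the containment $I_{qq_{\circ}}\subseteq\m^{[q]}+\idealp$ for \emph{one} minimal prime $\idealp$ of top dimension (this is exactly the weaker statement recorded in the paper's remark on positivity in the non-domain case), and even containments modulo \emph{all} minimal primes would not recover $I_{qq_{\circ}}\widehat R\subseteq\m^{[q]}\widehat R$, since $\bigcap_i(\m^{[q]}\widehat R+\idealp_i)$ can be strictly larger than $\m^{[q]}\widehat R$. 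The fix is simply to skip the reduction: an excellent local domain is itself OK (Corollary~\ref{complete local domain is OK: C}), so the valuation argument --- yours or the paper's --- can be run directly on $R$, with $\vol_R(I_\bullet)=[\kk_V:\kk]\cdot\vol_{\RR^d}(C\setminus\Delta)$ from Corollary~\ref{OK limit of m-primary ideals exists: E}. With that change, and with the lattice-point comparison carried out via Theorem~\ref{ConeTheorem: T} as above, your argument goes through.
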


In Section~\ref{Background}, we review some basic notions in convex Euclidean geometry, semigroups, and valuations that we will use throughout. We also recall here a couple of essential theorems of Kaveh and Khovanskii. In Section~\ref{OK-valuations}, we define a class of valuations that are suitable for measuring colengths of ideals. Many of the ideas in this section come from Kaveh-Khovanskii and Cutkosky; however, we include details here to keep the presentation self-contained. In Section~\ref{semigroups-p-stuff}, we describe the key combinatorial structure, a $p$-system, and the region in Euclidean space whose volume measures it, a $p$-body. In Section~\ref{limits-as-volumes}, we prove Theorem~\ref{existence-intro} on the existence of limits. It appears as the combination of Theorem~\ref{OK limit existence: T}, which relates asymptotic growth of lengths to volumes of $p$-bodies, Theorem~\ref{m-primary limits exist: T}, establishing the ``if and only if'' part of the statement, and Theorem~\ref{limits-of-pairs-reduced}, which corresponds to the last assertion. Theorem~\ref{BM-ineq} also appears there as Theorem~\ref{BMineq}, and Theorem~\ref{nonvanishing-intro} appears as Theorem~\ref{vanishing: T}. In Section~\ref{applications}, we apply these existence results to the particular limits mentioned in the introduction, and discuss our work in the toric context.


\section{Conventions and basic notions}\label{Background}

In this section, we establish some notation, and review some basics from convex geometry, semigroup theory, and valuation theory.  Throughout this discussion, $U$ and $V$ are arbitrary subsets of $\RR^d$.

\subsection{Euclidean geometry and measure}  The standard inner product of vectors $\vv{u}$ and $\vv{v}$ in $\RR^d$ is denoted $\iprod{\vv{u}}{\vv{v}}$.   We use $\interior{\vs{U}}, \overline{\vs{U}}$, and $\boundary{\vs{U}} = \overline{\vs{U}} \setminus \interior{\vs{U}}$  to denote the interior, closure, and boundary, respectively, of $U$ with respect to the Euclidean topology on $\RR^d$.   If $U$ is Lebesgue measurable, we call its measure $\vol_{\RR^d}(U)$ its \emph{volume}.    Given a real number $\lambda$,  the \emph{Minkowski sum} $U + \lambda V$ consists of all sums $\vv{u}+\lambda \vv{v}$ with $\vv{u} \in U$ and $\vv{v} \in V$.  We define $U-\lambda V$ analogously.

\subsection{Convex cones}  

A \emph{conical combination} of points in $\RR^d$ is an $\RR$-linear combination of those points with nonnegative coefficients, and a  \emph{convex cone} (or simply, \emph{cone}) is any subset of $\RR^d$ that is closed under taking conical combinations.  The \emph{closed cone generated by $U$} consists of the closure of the set of all conical combinations of elements of $U$, and is denoted $\cone(U)$.  Recall that a cone in $\RR^d$ has a nonempty interior if and only if the real vector space it generates has dimension $d$.  We call such a cone \emph{full-dimensional}. 

 A cone $C$ is \emph{pointed} if it is closed, and if there exists a  vector $\vv{a} \in \RR^d$ such that $ \iprod{\vv{u}}{\vv{a}} >0$ for all nonzero points $\vv{u}$ in $C$.  In this case,  if $\alpha$ is a nonnegative real number, we call any halfspace of the form
\[ H = \{ \vv{u} \in \RR^d : \iprod{\vv{u}}{\vv{a}} < \alpha \} \] a \emph{truncating halfspace} for $C$.    Observe that if $\alpha$ is positive, then the  intersection of $C$ and $H$ is nonempty and bounded, and we call any subset of $C$ of this form a \emph{truncation} of $C$.

Given a cone $C$ in $\RR^d$, we say that a subset $\Gamma$ of $\RR^d$ is a \emph{$C$-convex region} (or simply \emph{$C$-convex}) if the Minkowski sum $C+\Gamma$ lies in $\Gamma$.
\begin{example}
\label{C-convex region: R}
  If $C$ is a pointed cone in $\RR^d$, and $H$ is a truncating halfspace for $C$, then the region $\Gamma$ consisting of all points in $C$ lying outside of the truncation $C \cap H$ is a $C$-convex region. 
  \end{example}

We recall the following theorem of Kaveh and Khovanskii \cite[Corollary~2.4]{KavehKhovanskii14}, which is reminiscent of the classical Brunn-Minkowski theorem for convex sets.

\begin{theorem}\label{brunn-minkowski-convex}
Let $C$ be a full-dimensional pointed cone in $\RR^d$, and let $\Delta'$ and $\Delta''$ be $C$-convex subsets of $C$.  If the complements of $\Delta'$ and $\Delta''$ in $C$ are bounded, then
\[ \vol_{\RR^d}( C \setminus \Delta' ) ^{1/d} + \vol_{\RR^d}( C \setminus \Delta'' ) ^{1/d} \geq \vol_{\RR^d}( C \setminus (\Delta' + \Delta'') ) ^{1/d}.\]
\qed
\end{theorem}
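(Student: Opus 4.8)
The plan is to bypass the classical Brunn--Minkowski inequality --- fed the Minkowski sums of truncations of $C$ that first suggest themselves, it only produces an estimate in the wrong direction, since one cannot truncate $C$ below the coconvex regions involved --- and instead to argue \emph{dually}, encoding each region through its radial function on the sphere and invoking Minkowski's integral inequality. Write $A':=C\setminus\Delta'$, $A'':=C\setminus\Delta''$, and $A''':=C\setminus(\Delta'+\Delta'')$; the set $\Delta'+\Delta''$ is again a $C$-convex subset of $C$, since $\Delta'+\Delta''\subseteq C+C=C$ and $(\Delta'+\Delta'')+C=\Delta'+(\Delta''+C)\subseteq\Delta'+\Delta''$, so all three regions are of the same type. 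The key elementary observation is that, for any $C$-convex $\Delta\subseteq C$, the region $A=C\setminus\Delta$ is star-shaped about the origin: if $\vv{x}\in A$ and $0\le\lambda<1$ then $\lambda\vv{x}\in C$, while $\lambda\vv{x}\notin\Delta$ because otherwise $\vv{x}=\lambda\vv{x}+(1-\lambda)\vv{x}\in\Delta+C\subseteq\Delta$. Hence $A$ coincides, up to a Lebesgue-null set, with $\{t\vv{u}:\vv{u}\in S^{d-1},\ 0\le t<\rho_A(\vv{u})\}$, where $\rho_A(\vv{u}):=\sup\{t\ge0:t\vv{u}\in A\}$ (with the convention $\sup\emptyset=0$); moreover $\rho_A$ vanishes off $C\cap S^{d-1}$ and is bounded exactly when $A$ is bounded, and polar coordinates give $\vol_{\RR^d}(A)=\tfrac1d\int_{S^{d-1}}\rho_A(\vv{u})^d\,d\vv{u}$.

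With this in hand, the only step carrying actual content is the pointwise estimate $\rho_{A'''}(\vv{u})\le\rho_{A'}(\vv{u})+\rho_{A''}(\vv{u})$ for $\vv{u}\in S^{d-1}$; since all three functions vanish off $C$, I may assume $\vv{u}\in C$. Fix such a $\vv{u}$, let $t>\rho_{A'}(\vv{u})+\rho_{A''}(\vv{u})$, and write $t=t'+t''$ with $t'>\rho_{A'}(\vv{u})$ and $t''>\rho_{A''}(\vv{u})$. Then $t'\vv{u}\in C$ and $t'\vv{u}\notin A'$, so $t'\vv{u}\in\Delta'$, and likewise $t''\vv{u}\in\Delta''$; hence $t\vv{u}=t'\vv{u}+t''\vv{u}\in\Delta'+\Delta''$, so $t\vv{u}\notin A'''$. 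Letting $t$ decrease to $\rho_{A'}(\vv{u})+\rho_{A''}(\vv{u})$ gives the estimate. In particular $\rho_{A'''}\le\rho_{A'}+\rho_{A''}$ is bounded, so $A'''$ is bounded, hence of finite volume.

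Finally, applying monotonicity of the integral and then Minkowski's inequality in $L^d(S^{d-1})$ (valid since $d\ge1$),
\begin{align*}
\Big(\int_{S^{d-1}}\rho_{A'''}^d\,d\vv{u}\Big)^{1/d}
&\le\Big(\int_{S^{d-1}}(\rho_{A'}+\rho_{A''})^d\,d\vv{u}\Big)^{1/d}\\
&\le\Big(\int_{S^{d-1}}\rho_{A'}^d\,d\vv{u}\Big)^{1/d}+\Big(\int_{S^{d-1}}\rho_{A''}^d\,d\vv{u}\Big)^{1/d};
\end{align*}
dividing by $d^{1/d}$ and using the volume formula gives $\vol_{\RR^d}(A''')^{1/d}\le\vol_{\RR^d}(A')^{1/d}+\vol_{\RR^d}(A'')^{1/d}$, which is the assertion. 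I expect the main obstacle to be conceptual rather than technical: the temptation is to apply classical Brunn--Minkowski to cone-truncations, which fails, and one must realize that the correct input is the one-line radial subadditivity above together with the ``dual'' volume formula. The implicit measurability hypotheses (on $\Delta'$, $\Delta''$, hence on $\rho_{A'},\rho_{A''}$ and on $\Delta'+\Delta''$) are harmless --- in every application the regions are Borel, so $\Delta'+\Delta''$ is analytic and in particular Lebesgue measurable.
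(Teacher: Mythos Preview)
Your proof is correct. Note, however, that the paper does not actually prove this statement: it is quoted verbatim as \cite[Corollary~2.4]{KavehKhovanskii14} and marked with a terminal \qed, so there is no ``paper's own proof'' to compare against beyond the citation.

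That said, your argument is worth a brief remark because it is self-contained and conceptually clean. Rather than invoking the classical Brunn--Minkowski inequality for convex bodies (which, as you observe, points the wrong way when applied na\"ively to truncations of $C$), you exploit the fact that the complement $C\setminus\Delta$ of a $C$-convex region is star-shaped about the origin, express its volume via the radial function in polar coordinates, and reduce the inequality to the pointwise subadditivity $\rho_{A'''}\le\rho_{A'}+\rho_{A''}$ together with Minkowski's inequality in $L^d(S^{d-1})$. This is essentially the mechanism behind Lutwak's \emph{dual} Brunn--Minkowski theory, and it sidesteps any appeal to the structure theory of convex bodies. The Kaveh--Khovanskii argument proceeds differently, passing through their notion of coconvex bodies and a reduction to the convex case; your route is arguably more direct for this particular inequality. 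Your handling of the edge cases (the star-shapedness check, boundedness of $A'''$, and the measurability caveat) is adequate for the applications in this paper, where all the relevant $\Delta$'s are countable unions of translates of $C$ and hence Borel.
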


We shall also need the following lemma.

\begin{lemma}\label{volume-zero} Let $C$ be a full-dimensional pointed cone in $\RR^d$, and let $H$ be  a truncating halfspace for $C$.  Let $\Delta$ be a $C$-convex subset of $C$ such that $\vol_{\RR^d}(\partial \Delta)=0$. Then ${\vol_{\RR^d}( C \setminus \Delta ) >0}$ if and only if there is some $\alpha>0$ such that $\Delta \cap \alpha H$ is empty.

\end{lemma}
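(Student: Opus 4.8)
The plan is to prove the two implications separately. Fix a vector $\vv{a} \in \RR^d$ as in the definition of pointedness of $C$, so that $\iprod{\vv{u}}{\vv{a}} > 0$ for every nonzero $\vv{u} \in C$, and write $H = \{\vv{u} \in \RR^d : \iprod{\vv{u}}{\vv{a}} < \beta\}$; we take $\beta > 0$, so that the truncations $C \cap \alpha H$ are nonempty, and after rescaling $\vv{a}$ we may assume $\beta = 1$, so that $\alpha H = \{\vv{u} : \iprod{\vv{u}}{\vv{a}} < \alpha\}$ for every $\alpha > 0$. Two facts are used repeatedly: a pointed cone is closed, and $C \cap \{\vv{u} : \iprod{\vv{u}}{\vv{a}} \leq \alpha\}$ is compact for every $\alpha \geq 0$.

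For the direction ``$\Leftarrow$'', suppose $\Delta \cap \alpha H = \emptyset$ for some $\alpha > 0$; then $C \cap \alpha H \subseteq C \setminus \Delta$. Since $C$ is full-dimensional, its interior contains a nonzero point $\vv{u}_{\circ}$, and then $t\vv{u}_{\circ} \in \interior{C} \cap \alpha H$ for all sufficiently small $t > 0$; hence $\interior{C} \cap \alpha H$ is a nonempty open set, and $\vol_{\RR^d}(C \setminus \Delta) \geq \vol_{\RR^d}(C \cap \alpha H) > 0$.

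For ``$\Rightarrow$'' I will argue the contrapositive: assuming $\Delta \cap \alpha H \neq \emptyset$ for every $\alpha > 0$, I will show $\vol_{\RR^d}(C \setminus \Delta) = 0$. Choose points $\vv{x}_n \in \Delta$ with $\iprod{\vv{x}_n}{\vv{a}} < 1/n$. These lie in the compact set $C \cap \{\vv{u} : \iprod{\vv{u}}{\vv{a}} \leq 1\}$, so after passing to a subsequence $\vv{x}_n \to \vv{x}_{\circ} \in C$ with $\iprod{\vv{x}_{\circ}}{\vv{a}} = 0$, which forces $\vv{x}_{\circ} = \vv{0}$; thus $\vv{0} \in \closure{\Delta}$. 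Now for any $\vv{v} \in \interior{C}$ we have $\vv{x}_n + \vv{v} \in C + \Delta \subseteq \Delta$ by $C$-convexity, and $\vv{x}_n + \vv{v} \to \vv{v}$, so $\vv{v} \in \closure{\Delta}$; that is, $\interior{C} \subseteq \closure{\Delta}$. Consequently
\[ C \setminus \Delta \subseteq (C \setminus \interior{C}) \cup (\interior{C} \setminus \Delta) \subseteq \boundary{C} \cup (\closure{\Delta} \setminus \Delta) \subseteq \boundary{C} \cup \boundary{\Delta}. \]
Here $\vol_{\RR^d}(\boundary{\Delta}) = 0$ by hypothesis, while $\vol_{\RR^d}(\boundary{C}) = 0$ because the boundary of a full-dimensional convex set is Lebesgue-null (cover $C$ by the convex bodies $C \cap \closure{B(\vv{0},n)}$, each of which has null boundary, and take the countable union); hence $\vol_{\RR^d}(C \setminus \Delta) = 0$.

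The step requiring the most care is the second implication: extracting from the nonemptiness hypothesis a sequence in $\Delta$ converging to the apex of $C$, where compactness of truncations and closedness of $C$ are used, and then invoking $C$-convexity to push this to the conclusion $\interior{C} \subseteq \closure{\Delta}$. Everything after that is formal, relying only on the hypothesis on $\boundary{\Delta}$ and the standard measure-zero property of boundaries of convex sets.
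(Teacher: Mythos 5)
Your proof is correct and follows essentially the same route as the paper's: the easy direction is identical, and the key step in the other direction---using compactness of the truncation and pointedness to extract a sequence in $\Delta$ accumulating at the origin, then $C$-convexity to conclude that $\closure{\Delta}$ absorbs (the interior of) $C$---is exactly the paper's argument. The only cosmetic difference is that you argue the contrapositive and finish via $C \setminus \Delta \subseteq \boundary{C} \cup \boundary{\Delta}$, which requires the additional (standard) fact that $\vol_{\RR^d}(\boundary{C})=0$, whereas the paper uses $\vol_{\RR^d}(\boundary{\Delta})=0$ at the outset to pass to $\closure{\Delta}$ and derives the contradiction $C=\closure{\Delta}$ directly.
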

\begin{proof} Fix a vector $\vv{a}$ in $\RR^d$ such that $H=\{ \vv{u}\in \RR^d \ | \ \iprod{\vv{u}}{\vv{a}}<1\}$.  If there exists an $\alpha >0$ such that $\Delta \cap \alpha H$ is empty, then $C \setminus  \Delta$ contains $C \cap \alpha H$, which has positive volume.

On the other hand, if $\vol_{\RR^d}(C \setminus \Delta)$ is positive, then the assumption that $\vol_{\RR^d}(\partial \Delta)=0$ implies that $\vol_{\RR^d}( C \setminus \bar{\Delta} ) >0$.  Now, suppose for the sake of contradiction that there does not exist a number $\alpha>0$ such that $\Delta \cap \alpha H$ is empty. By our choice of $\vv{a}$, this implies that there is a sequence of points $\{\vv{v}_n \}_{n=1}^{\infty}$  in $\Delta$ such that $\iprod{\vv{v}_n}{\vv{a}}\rightarrow 0$. For all large $n$, the term $\vv{v}_n$ lies in the compact set $C\cap H$,  and therefore a  subsequence of $\{\vv{v}_n\}_{n=1}^{\infty}$ accumulates to a point $\vv{v}\in C$ with ${\iprod{\vv{v}}{\vv{a}}= 0}$. However, $C$ is pointed with $H$ a truncating halfspace, and so the only point in $C$ whose inner product with $\vv{a}$ is zero is the origin.  Thus, we must have that $\vv{v}=\vv{0}$, which implies that $\vv{0}\in \bar{\Delta}$. Finally, since $\Delta$ is $C$-convex, this implies that $C=\bar{\Delta}$, which contradicts that $\vol_{\RR^d}( C \setminus \bar{\Delta} ) >0$.
 \end{proof}

\subsection{Semigroups}     
In this article, a \emph{semigroup} is any subset of $\ZZ^d$ that contains zero and is closed under addition.  The semigroup of nonnegative integers is denoted $\NN$, and a semigroup $S$ is \emph{finitely generated} if there exists a finite subset $G$ of $S$ such that every element of $S$ can be written as an $\NN$-linear combination of elements of $G$.  We call a subset $T$ of a semigroup $S$ an \emph{ideal} of $S$ whenever $S+T$ is contained in $T$.  

A semigroup $S$ is called \emph{pointed} if the only element of $S$ whose negative is also in $S$ is zero.  Note that if the closed cone generated by $S$ is pointed, then so is $S$.  However, the converse is false, but does hold if $S$ is assumed to be finitely generated.

Observe that the group generated by a semigroup $S$ in $\ZZ^d$ is the Minkowski difference $S-S$.  Thus, after replacing $\ZZ^d$ with this finitely generated free abelian group, when convenient, we may always assume that $S$ lies in $\ZZ^d$, and that $S-S = \ZZ^d$.  

We conclude our discussion of semigroups by recalling an important relation between semigroups and cones, which is a special case of \cite[Theorem 1.6]{KavehKhovanskii12}, also due to Kaveh and Khovanskii.

\begin{theorem}[Kaveh and Khovanskii]
\label{ConeTheorem: T}
Let $S$ be a semigroup in $\ZZ^d$ such that $S-S = \ZZ^d$, and such that the full-dimensional cone $C$ generated by $S$ is pointed.  If $\mathscr{C}$ is any full-dimensional pointed subcone of $C$ whose boundary intersects $\partial C$ only at the origin, then there exists truncating halfspace $\mathscr{H}$ for $C$ such that $(\mathscr{C} \cap \ZZ^d) \setminus \mathscr{H}$ lies in $S$. \qed 
\end{theorem}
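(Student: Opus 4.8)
The plan is to first reduce to the case of a finitely generated subsemigroup and then produce $\mathscr{H}$ by an absorption argument powered by the hypothesis $S - S = \ZZ^d$. Begin by noting that the condition ``$\boundary{\mathscr{C}}$ meets $\boundary{C}$ only at $\vv{0}$'' says precisely that $\mathscr{C} \setminus \{\vv{0}\} \subseteq \interior{C}$, so the unit cross section $K := \{\vv{u} \in \mathscr{C} : \|\vv{u}\| = 1\}$ is a compact subset of $\interior{C} = \interior{\cone(S)}$. Since any point of $\interior{\cone(S)}$ already lies in the interior of $\cone(G')$ for a suitable finite $G' \subseteq S$ (approximate the vertices of a small simplex around the point by conical combinations of elements of $S$), a finite subcover of $K$ produces a finite $G \subseteq S$ with $K \subseteq \interior{\cone(G)}$; adjoining finitely many more elements of $S$, I may also assume $G$ generates $\ZZ^d$ as a group. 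Put $S_0 := \langle G\rangle \subseteq S$ and $C_0 := \cone(S_0)$: then $S_0$ is finitely generated, $S_0 - S_0 = \ZZ^d$, $C_0$ is a full-dimensional pointed cone, and $\mathscr{C} \setminus \{\vv 0\} \subseteq \interior{C_0}$. It suffices to prove the conclusion with $S_0$ in place of $S$; and since $C$ is pointed, I will choose the vector defining $\mathscr{H}$ to be positive on all of $C \setminus \{\vv 0\}$, so that $\mathscr{H}$ truncates $C$, not merely $C_0$.

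Next I would show that the saturation $\widetilde{S_0} := C_0 \cap \ZZ^d$ can be ``absorbed'' into $S_0$ by a single translation. Fixing integer generators $g_1, \dots, g_k$ of the polyhedral cone $C_0$ and splitting each element $\sum \lambda_i g_i$ of $\widetilde{S_0}$ into integer and fractional parts gives $\widetilde{S_0} = S_0 + F$ for the finite set $F := \ZZ^d \cap \{\sum t_i g_i : 0 \le t_i < 1\}$ (which contains $\vv 0$). For each $\vv b \in F$, the equality $S_0 - S_0 = \ZZ^d$ supplies $s_{\vv b}, s'_{\vv b} \in S_0$ with $\vv b + s_{\vv b} = s'_{\vv b} \in S_0$. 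The key observation is that $\vv w := \sum_{\vv b \in F} s_{\vv b} \in S_0$ absorbs all of $F$ at once: for each $\vv b \in F$, $\vv w + \vv b = (\vv b + s_{\vv b}) + \sum_{\vv b' \neq \vv b} s_{\vv b'}$ is a sum of two elements of $S_0$, hence lies in $S_0$; therefore $\vv w + \widetilde{S_0} \subseteq S_0$.

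Then I would translate this back to the cone $\mathscr{C}$. From $\mathscr{C} \setminus \{\vv 0\} \subseteq \interior{C_0}$ and compactness of $K$, extract $\error > 0$ such that the ball of radius $\error\|\vv y\|$ about every $\vv y \in \mathscr{C}$ lies in $C_0$; consequently any $\vv x \in \mathscr{C}$ with $\|\vv x\| \geq \|\vv w\|/\error$ has $\vv x - \vv w \in C_0$, and if moreover $\vv x \in \ZZ^d$ then $\vv x - \vv w \in \widetilde{S_0}$, so $\vv x = \vv w + (\vv x - \vv w) \in \vv w + \widetilde{S_0} \subseteq S_0 \subseteq S$. Finally, choosing $\vv a$ with $\iprod{\vv u}{\vv a} > 0$ for all nonzero $\vv u \in C$ and setting $\mathscr{H} := \{\vv u \in \RR^d : \iprod{\vv u}{\vv a} < \|\vv a\| \cdot \|\vv w\| / \error\}$ — a truncating halfspace for $C$ — any $\vv x \in (\mathscr{C} \cap \ZZ^d) \setminus \mathscr{H}$ satisfies $\|\vv x\| \geq \iprod{\vv x}{\vv a}/\|\vv a\| \geq \|\vv w\|/\error$, hence lies in $S$, which is the desired conclusion.

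I expect the first step to be the real obstacle: one must arrange a \emph{single} finitely generated $S_0 \subseteq S$ that simultaneously generates $\ZZ^d$, has a pointed full-dimensional cone, and contains $\mathscr{C}$ strictly inside the interior of that cone, and it is exactly the hypothesis that $\boundary{\mathscr{C}}$ meets $\boundary{C}$ only at the origin that makes this simultaneous control possible. The remaining two steps are essentially formal, the one genuinely clever point being the ``sum the witnesses'' manoeuvre that upgrades ``each $\vv b \in F$ can be absorbed individually'' to ``one $\vv w$ absorbs all of $F$.''
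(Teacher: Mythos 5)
Your argument is correct, and it is worth noting that the paper itself offers no proof to compare against: Theorem \ref{ConeTheorem: T} is quoted as a special case of \cite[Theorem 1.6]{KavehKhovanskii12} and closed with a \(\qed\). What you have written is a complete, self-contained proof along the lines of the standard Khovanskii-type argument underlying that citation, and each step checks out: the topological observation that $\partial\mathscr{C}\cap\partial C=\{\vv{0}\}$ forces $\mathscr{C}\setminus\{\vv{0}\}\subseteq\interior{C}$; the compactness/approximation step producing a finitely generated $S_0\subseteq S$ with $S_0-S_0=\ZZ^d$ and $\mathscr{C}\setminus\{\vv{0}\}\subseteq\interior{\cone(S_0)}$ (the perturbed-simplex argument is fine, since small perturbations of the vertices of a full-dimensional simplex containing the point in its interior keep it in the interior, and adjoining the finitely many elements of $S$ witnessing $e_i=s_i-t_i$ fixes the group); the Gordan-type decomposition $C_0\cap\ZZ^d=S_0+F$ with $F$ finite; and the ``sum the witnesses'' element $\vv{w}$ that absorbs the whole saturation at once. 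The only cosmetic point is the borderline case $\|\vv{w}\|=\error\|\vv{x}\|$: phrase the uniform estimate with closed balls (the distance from the compact cross-section $K$ to the closed set $\RR^d\setminus\interior{C_0}$ is positive, so closed balls of a slightly smaller radius still land in $C_0$), or simply take the threshold in $\mathscr{H}$ strictly larger; either repair is one line. You also correctly took care that $\mathscr{H}$ be a truncating halfspace for $C$ itself, not merely for $C_0$, by choosing the defining vector strictly positive on $C\setminus\{\vv{0}\}$, which is exactly what the statement requires. In short: the proposal supplies a valid elementary proof of a result the paper only cites, and its structure (reduce to a finitely generated subsemigroup whose cone strictly engulfs $\mathscr{C}$, then absorb the finite fractional part using $S-S=\ZZ^d$) is essentially the canonical route to Kaveh--Khovanskii's Theorem 1.6.
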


\subsection{Valuations}  Herein, we consider valuations whose value group is order-isomorphic to a  finitely generated free additive subgroup of $\RR$.  In particular, all valuations will be of rank one and of finite rational rank.   Given such a valuation, we will be interested in certain semigroup-theoretic  and convex-geometric  objects defined in terms of the value group, and will therefore find it convenient to fix an isomorphism  of this group with the lattice points in some Euclidean space. 

Towards this, fix a $\ZZ$-linear embedding of $\ZZ^d$ into $\RR$, and consider the inherited linear order on $\ZZ^d$.  A  \emph{valuation} on a field $\FF$ with value group $\ZZ^d$ is a surjective group homomorphism \[ \nu: \FF^{\times} \longonto \ZZ^d \] 
with the property that $\nu(x+y)$ is greater than or equal to the minimum of $\nu(x)$ and $\nu(y)$ for all $x$ and $y$ in $\FF^{\times}$ not differing by a sign.  In an abuse of notation, given a subset $M$ of $\FF$, we will set \[ \nu(M) := \nu(M \setminus 0).\] 

Given a point $\vv{u}$ in $\ZZ^d$,   we use $\FF_{\geq \vv{u}}$ to denote the subset of $\FF$ consisting of $0$ and the elements $x$ with $\nu(x) \geq \vv{u}$, and  we define $\FF_{ > \vv{u}}$  similarly.   

The \emph{local ring of $\nu$} is the subring $(V, \m_V, \kk_V)$ of $\FF$ consisting of the valuation ring $\FF_{\geq \vv{0}}$ and its unique maximal ideal $\FF_{> \vv{0}}$.  

Recall that $\nu$ is said to \emph{dominate} a local domain $(D, \m, \kk)$ with fraction field $\FF$ if $(D, \m, \kk)$ is a local subring of $(V, \m_V, \kk_V)$.

\begin{remark} 
\label{dimension of quotient: R}
 If $\vv{u} \in \ZZ^d$, then both $\FF_{ > {\vv{u}}}$ and $\FF_{\geq \vv{u}}$ are naturally  modules over $V$, and $\FF_{\geq \vv{u}} \, / \, \FF_{ > \vv{u}}$ is a vector space over the residue field $\kk_V$.   

In fact,  if $\vv{u} = \nu(h)$ for some nonzero $h$, then \[ \dim_{\kk_V}  \( \frac{\FF_{\geq \vv{u}}}{\FF_{ > \vv{u}}} \) = 1.\]  
Indeed, this follows from the observation that if $g$ is any other nonzero $g$ with $\vv{u} = \nu(g)$, then $g$ differs from $h$ by $g/h$, which has $\nu$-value $\vv{0}$.
\end{remark}


The linear order on $\ZZ^d$ resulting from its embedding into $\RR$ is a crucial ingredient in our definitions above.  For future reference, we record a concrete description of this order below.

\begin{remark}
\label{embedding into RR: R}
  Every $\ZZ$-linear embedding $\ZZ^d \longhookrightarrow \RR$ is of the form $\vv{u}~\mapsto~\iprod{\vv{a}}{\vv{u}}$ for some unique vector $\vv{a} \in \RR^d$ whose coordinates are linearly independent over $\QQ$.  Consequently, the inherited order on $\ZZ^d$ is such that $\vv{u} \leq \vv{v}$ in $\ZZ^d$ whenever $\iprod{\vv{u}}{\vv{a}} \leq \iprod{\vv{v}}{\vv{a}}$ in $\RR$.
\end{remark}

\subsection{Interplay}  Let $D$ be a domain of dimension $d$ with fraction field $\FF$.  Fix a $\ZZ$-linear embedding $\ZZ^d \longhookrightarrow \RR$ induced by a vector $\vv{a} \in \RR^d$, and consider the linear order on $\ZZ^d$ as described in Remark \ref{embedding into RR: R}.  Let 
\[ \nu: \FF^{\times} \longonto \ZZ^d \] be a valuation on $\FF$ with value group $\ZZ^d$.

As $\nu$ is a group homomorphism, $S= \nu(D)$ is a semigroup in $\ZZ^d$, and the image under $\nu$ of any  ideal of $D$ is a semigroup ideal of $S$.   Furthermore, the surjectivity of $\nu$ implies that $S-S = \ZZ^d$, and so $S$ generates the ambient $\ZZ^d$ as a group.  It follows that the closed cone $C$ in $\RR^d$ generated by $S$ is full-dimensional.  We stress that, in practice, $S$ is often not finitely generated, and so $C$ need not be polyhedral.

Suppose that $(D, \m, \kk)$ is local.  In terms of the semigroup $S=\nu(D)$, it is not difficult to see that $D$ is dominated by $\nu$ if and only if every element of $\nu(\m)$ is greater than $\vv{0}$, and $S = \nu(\m) \cup \{ \vv{0} \}$.  In light of Remark \ref{embedding into RR: R}, this implies that every nonzero point $\vv{u}$ of $S$ satisfies $\iprod{\vv{u}}{\vv{a}}  > 0$, and so $S$ is pointed by the order on $\ZZ^d$.  

These observations motivate the following (nonstandard) definition.

\begin{definition}[Strong domination] In the above notation, we say that $D$ is \emph{strongly dominated} by $\nu$ if it is dominated by $\nu$, and the full-dimensional closed cone $C$ generated by $S$ (and not just $S$ itself) is pointed by the order on $\ZZ^d$.  In other words, $D$ is strongly dominated by $\nu$ if it is dominated by $\nu$, and $\iprod{\vv{u}}{\vv{a}} > 0$ for every nonzero $\vv{u}$ in $C$.

\end{definition}


\section{OK valuations}
\label{OK-valuations}

Following Cutkosky~\cite{Cutkosky13, Cutkosky14}, we consider a distinguished class of valuations that extend the notion of ``good valuation'' with ``$1$-dimensional leaves" defined by Kaveh and Khovanskii in \cite{KavehKhovanskii12}.  For the convenience of the reader, we review the relevant constructions in this section.

\subsection{OK valuations} 

\begin{definition}
\label{OKvaluation: D}    Let $(D, \m, \kk)$ be a local domain of dimension $d$ with fraction field $\FF$, and fix a $\ZZ$-linear embedding of $\ZZ^d$ into $\RR$.   

A valuation \[ \nu: \FF^{\times} \longonto \ZZ^d \] on $\FF$ with value group $\ZZ^d$ and local ring $(V, \m_V, \kk_V)$ is said to be \emph{OK relative to D} whenever $D$ is strongly dominated by $\nu$,  the resulting extension of the residue fields $\kk \longhookrightarrow \kk_V$ is finite (algebraic), and 
there exists a point $\vv{v}$ in $\ZZ^d$ such that \[ D  \cap \  \FF_{\geq a \vv{v}} \subseteq \m^a \]  for every nonnegative integer $a$. 
\end{definition}

\begin{definition} A  local domain $D$ of dimension $d$ is said to be \emph{OK} whenever there exists an valuation on its fraction field with value group $\ZZ^d$ that is OK relative to $D$.
\end{definition}


\begin{example} 
\label{toric ring is OK: E} 
Let $S$ be a finitely generated pointed semigroup in $\ZZ^d$ with $S-S = \ZZ^d$.  As $S$ is finitely generated, the full-dimensional cone $C$ generated by $S$ is also pointed, and so  we may fix a point $\vv{a} \in \RR^d$ such that $\iprod{\vv{u}}{\vv{a}}$ is positive for every nonzero point $\vv{u}$ in $C$.  As $C$ is full-dimensional,  we may perturb the coordinates of $\vv{a}$ so that they are linearly independent over $\QQ$.  In what follows, we consider the $\ZZ$-linear embedding of $\ZZ^d$ into $\RR$ determined by this point.    

Let $\kk[S]$ be the $\kk$-subalgebra of $\kk[x_1^{\pm 1}, \dots, x_d^{\pm 1} ]$ with $\kk$-basis given by the monomials $x^{\vv{u}}$ with $\vv{u} \in S$, $D$ be the localization of $\kk[S]$ at the maximal ideal generated by the monomials with nonzero exponents.

Given a nonzero element $f$ in $\kk[S]$, let $\nu(f)$ be the least exponent that corresponds to a supporting monomial of $f$.  We claim that the induced valuation $\nu: \FF^{\times} \longonto \ZZ^d$ on the fraction field $\FF$ of $D$ is OK relative to $D$.  Indeed, this essentially follows from the observation that the semigroup $\nu(D)$ agrees with the semigroup $S$, and from the definition of the order on $\ZZ^d$.  We note that the extension of residue fields resulting from the domination of $D$ by $\nu$ is an isomorphism, and that one may take $\vv{v}$ in Definition \ref{OKvaluation: D} to be the greatest element of some finite generating set for $S$.
\end{example}

\begin{example}[A power series ring over a field is OK]
\label{power series ring is OK: E}
Let $D$ be the power series ring in $d$ variables over a field $\kk$, and with fraction field $\FF$.  Fix a vector $\vv{a}$ in $\RR^d$ with \emph{positive} entries that are linearly independent over $\QQ$, and consider the associated embedding $\ZZ^d \longhookrightarrow \RR$. 

The positivity of $\vv{a}$ implies that every subset of $\NN^d$ has a least element, and so the function $\nu$ assigning to each nonzero power series in $D$ the least exponent vector that appears among its terms is well-defined.  We claim that the induced valuation $\nu: \FF^{\times} \longonto \ZZ^d$ is OK relative to $D$.  For example, the fact that $C$ is contained in the nonnegative orthant tells us that $\iprod{\vv{u}}{\vv{a}}$ is positive for all nonzero points $\vv{u}$ in $C$.  The remaining details, similar to those in Example \ref{toric ring is OK: E}, are left to the reader.
\end{example}

\begin{example}[A regular local ring containing a field is OK]
\label{RLR is OK: E}
If $(D, \m, \kk)$ is a regular local ring containing a field,  then we may identify its completion $\hat{D}$ with a power series over $\kk$ in some minimal generating set for $\m$, and it is straightforward to verify that the restriction to the fraction field of $D$ of the valuation on the fraction field of $\hat{D}$ constructed in Example \ref{power series ring is OK: E} is OK relative to $D$.
\end{example}

Example \ref{RLR is OK: E} suggests that if an extension of rings is nice, then the restriction of an OK valuation is often OK.  Below, we see another important instance of this theme.

\begin{lemma}  
\label{restriction of OK is OK: L}
Consider an inclusion of local domains $(D, \m) \subseteq(A, \n)$ such that $\hat{D}$ is reduced (e.g., $D$ is an excellent domain), $A$ is essentially of finite type over $D$, and the induced extension of fraction fields is an isomorphism.  If $\FF$ is the common fraction field of $A$ and $D$,  then the restriction of any valuation on $\FF$ that is OK relative to $A$ is also OK relative to $D$.
\end{lemma}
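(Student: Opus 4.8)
The plan is to verify the three conditions of Definition~\ref{OKvaluation: D} for $\nu$ relative to $D$, leveraging that $\nu$ is OK relative to $A$. Throughout I will use that $D$ and $A$ share the fraction field $\FF$ and the Krull dimension $d$, so that $\ZZ^d$ is the correct value group for the notion ``OK relative to $D$''; the equality $\dim D = \dim A = d$ follows from the dimension formula, since $A$ is birational and essentially of finite type over $D$.

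\emph{Strong domination and residue fields.} The chain $D \hookrightarrow A \hookrightarrow V$ (a composite of local inclusions, using that $\nu$ dominates $A$) shows that $\nu$ dominates $D$. From $\nu(D) \subseteq \nu(A)$ we get $\cone(\nu(D)) \subseteq \cone(\nu(A))$, so $\cone(\nu(D))$ is pointed, being a subcone of a pointed cone; and it is full-dimensional because $\nu(D) - \nu(D) = \ZZ^d$ (each element of $\FF^{\times}$ is a quotient of nonzero elements of $D$, and $\nu$ is surjective). Thus $D$ is strongly dominated by $\nu$. For the residue extension, factor $\kk = D/\m \hookrightarrow A/\n \hookrightarrow \kk_V$: the second map is finite since $\nu$ is OK relative to $A$, and the first is finite since $A/\n$ is essentially of finite type over $\kk$ with $\operatorname{trdeg}_{\kk}(A/\n) = \dim D - \dim A = 0$, and an algebraic essentially-finite-type field extension is finite.

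\emph{The bounding point.} Fix $\vv{w}$ with $A \cap \FF_{\geq a\vv{w}} \subseteq \n^{a}$ for all $a \geq 0$ (such $\vv{w}$ exists by OK-ness relative to $A$). Because $D \subseteq A$, this gives $D \cap \FF_{\geq a\vv{w}} \subseteq \n^{a} \cap D$, so it suffices to exhibit a positive integer $c$ with $\n^{cn}\cap D \subseteq \m^{n}$ for all $n \geq 0$; a sufficiently large positive multiple $\vv{v}$ of $\vv{w}$ (also large enough to absorb an additive constant) then satisfies $D \cap \FF_{\geq a\vv{v}} \subseteq \m^{a}$, which is the remaining condition, and the order-theoretic bookkeeping is routine. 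To produce such a $c$: let $v_1,\dots,v_s$ be the finitely many Rees valuations of $\n$, each a divisorial valuation of $\FF$ with center $\n$ on $A$ and hence center $\m$ on $D$. Given $f \in \n^{cn}\cap D$ we have $f \in \overline{\n^{cn}}$, so $v_i(f) \geq cn\, v_i(\n)$ for all $i$. Since $\hat{D}$ is reduced, $D$ is analytically unramified, so an Izumi-type linear bound holds: for each $i$ there are constants $\alpha_i,\beta_i>0$ with $v_i(g) \leq \alpha_i\operatorname{ord}_{\m}(g) + \beta_i$ for every nonzero $g \in D$. Hence $\operatorname{ord}_{\m}(f) \geq (cn\, v_i(\n)-\beta_i)/\alpha_i$ for each $i$, and taking $c \geq \max_i\lceil\alpha_i/v_i(\n)\rceil$ — then slightly larger to dominate the finitely many $\beta_i/\alpha_i$ — forces $\operatorname{ord}_{\m}(f)\geq n$, i.e.\ $f\in\m^{n}$, as desired.

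\emph{Main obstacle.} I expect the crux to be the filtration comparison $\n^{cn}\cap D\subseteq\m^{n}$; the strong-domination and residue-field parts are soft once $\dim D = \dim A$ is known. One cannot simply replace $\n$ by $\m A$ here, since $\m A$ need not be $\n$-primary — this already fails when $A$ is a local ring of a blowup of $D$ — so the finer $\n$-adic filtration must be controlled directly. The essential input is the Izumi-type linear inequality for divisorial valuations centered at $\m$, and this is precisely — and the only place — where the hypothesis ``$\hat{D}$ reduced'' is used; making it work with only that hypothesis (rather than, say, $D$ analytically irreducible or excellent) and tracking the constant $c$ through the estimate is where the real work lies.
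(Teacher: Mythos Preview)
Your overall structure matches the paper's exactly: verify strong domination, finiteness of the residue extension, and the existence of a bounding point, with the last reduced to a linear comparison $\n^{cn}\cap D\subseteq\m^{n}$. The paper's treatment of the first two conditions is essentially what you wrote (cone inclusion for pointedness; factoring the residue extension through $A/\n$ and using that $A$ is essentially of finite type over $D$).

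Where you diverge is in the third condition. The paper does not argue via Rees valuations and an Izumi-type bound; it simply invokes H\"ubl's theorem \cite[Theorem~1]{Hubl01} for the excellent case, and then \cite[Lemma~4.3]{Cutkosky14} for the extension to $\hat{D}$ reduced, to obtain an integer $l$ with $\n^{kl}\cap D\subseteq\m^{k}$ for all $k$. Your Rees-valuation sketch is the natural idea behind such results, but the specific step ``$\hat{D}$ reduced $\Rightarrow$ an Izumi-type linear bound $v_i(g)\leq\alpha_i\operatorname{ord}_{\m}(g)+\beta_i$ holds'' is precisely the nontrivial content of those references: the classical Izumi inequality requires analytic irreducibility, not merely analytic unramifiedness, and bridging that gap (e.g., by passing to the finitely many components of $\hat{D}$ and controlling how the $v_i$ and $\operatorname{ord}_{\m}$ interact with each component) is where the real argument lives. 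You flag this correctly in your ``Main obstacle'' paragraph, but as written the proof leans on a statement that is not standard in the generality you need. So your proposal is correct in outline and identifies the crux accurately; to make it self-contained you would essentially be reproving H\"ubl's theorem plus Cutkosky's extension, whereas the paper is content to cite them.
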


\begin{proof} Fix a valuation $\nu: \FF^{\times} \longonto \ZZ^d$ that is OK relative to $A$.  To see that $\nu$ is also OK relative to $D$, first observe that the cone generated by $\nu(D)$ is contained in the one generated by $\nu(A)$.  Moreover, $D$ is a local subring of $A$, and hence, a local subring of the valuation ring $V$, and since $A$ is essentially of finite type over $D$ and $\nu$ is OK relative to $A$, the composition of extensions $D/\m \hookrightarrow A/\n \hookrightarrow \kk_{\nu}$ is finite.   It remains to verify the last condition concerning the existence of a distinguished vector in $\ZZ^d$, which we do below.

 If $D$ is complete, or more generally, excellent, then \cite[Theorem~1]{Hubl01}
shows that there exists a positive integer $l$ such that 
\[ \n^{kl} \cap D \subseteq \m^{k}   \] for every $k \in \NN$. An  argument appearing the proof of \cite[Lemma~4.3]{Cutkosky14}
shows that the same is also true if instead we only suppose that $\hat{D}$ is reduced.  In either case, if the point $\vv{v}$ satisfies the last condition in Definition \ref{OKvaluation: D} relative to $A$, then the point $\vv{w} =l\vv{v}$ satisfies this same condition relative to $D$.
\end{proof}

\begin{corollary}
\label{complete local domain is OK: C}
An excellent local domain containing a field is OK. In particular, a complete local domain containing a field is OK. 
\end{corollary}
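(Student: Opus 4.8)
The plan is to reduce to the complete local domain case in two steps and then settle that case by Noether normalization together with Example~\ref{power series ring is OK: E}. Note first that complete local rings are excellent, so the second assertion follows from the first; thus let $(D,\m)$ be an excellent local domain of dimension $d$ containing a field, and set $\FF := \operatorname{Frac}(D)$.

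\emph{Reduction to the normal, then complete, case.} Since $D$ is excellent it is a Nagata ring, so its integral closure $D'$ in $\FF$ is module-finite over $D$. Choose a maximal ideal $\n$ of $D'$ lying over $\m$, and set $A := D'_{\n}$; then $(D,\m)\subseteq(A,\n)$ is a local inclusion, $A$ is essentially of finite type over $D$ with $\operatorname{Frac}(A)=\FF$, and $\hat D$ is reduced, so by Lemma~\ref{restriction of OK is OK: L} the ring $D$ is OK as soon as $A$ is. We may therefore assume that $D$ is a \emph{normal} excellent local domain. Then $\hat D$ is normal, hence a domain, so $D$ is analytically irreducible; moreover $D\hookrightarrow\hat D$ is faithfully flat with $\m_{\hat D}=\m\hat D$, whence $\m_{\hat D}^{a}\cap D=\m^{a}$ for every $a$, and $\hat D$ is a complete local domain containing a field with the same dimension and residue field as $D$. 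Granting (see below) that $\hat D$ is OK, fix an OK valuation $\nu$ on $\operatorname{Frac}(\hat D)$, with value group $\ZZ^d$ and associated vector $\vv{a}$, and let $\nu_0$ denote its restriction to $\FF$. Then $\nu_0$ dominates $D$ (since $\m_{V_\nu}\cap D=\m_{\hat D}\cap D=\m$), the residue extension $D/\m=\hat D/\m_{\hat D}\hookrightarrow\kk_\nu$ is finite, and the distinguished vector $\vv{v}$ for $\hat D$ works for $D$ too, because $\nu(x)\geq a\vv{v}$ with $x\in D$ forces $x\in\m_{\hat D}^{a}\cap D=\m^{a}$. It remains to see that the value group $\Gamma:=\nu_0(\FF^{\times})\subseteq\ZZ^d$ has rational rank $d$ — so that, after an order-preserving relabeling $\Gamma\cong\ZZ^d$ (Remark~\ref{embedding into RR: R}), $\nu_0$ is a valuation with value group $\ZZ^d$ — and that $D$ is strongly dominated. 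Pointedness of the cone generated by $\nu_0(D)$ is automatic, as it is a subcone of the pointed cone $\cone(\nu(\hat D))$. For the rational rank: the distinguished-vector axiom yields
\[ \ell_D(D/\m^{a}) \ \leq\ [\kk_\nu:\kk]\cdot\#\bigl\{\vv{u}\in\nu_0(D):\iprod{\vv{u}}{\vv{a}}<a\,\iprod{\vv{v}}{\vv{a}}\bigr\}, \]
and the lattice points counted on the right, lying in a bounded truncation of the pointed cone $\cone(\nu(\hat D))$, number $O(a^{r})$ where $r$ is the rational rank of $\Gamma$; comparing with $\ell_D(D/\m^{a})\sim c\,a^{d}$ for a positive constant $c$ forces $r\geq d$, hence $r=d$. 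Thus $\nu_0$ is OK relative to $D$, and we are reduced to the complete case.

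\emph{The complete case.} Now let $(D,\m)$ be a complete local domain of dimension $d$ containing a field. By the Cohen structure theorem and Noether normalization, $D$ is module-finite over a power series subring $P$, and by Example~\ref{power series ring is OK: E} there is a valuation $\mu$ on $\operatorname{Frac}(P)$, OK relative to $P$, with value group $\ZZ^d$ and associated vector $\vv{a}$. Extend $\mu$ to a valuation $\nu$ on $\FF:=\operatorname{Frac}(D)$. By the fundamental inequality of valuation theory, the value group of $\nu$ contains $\ZZ^d$ with finite index and $\kk_\nu$ is finite over $\kk_\mu$; so after an order-preserving relabeling, $\nu$ has value group $\ZZ^d$. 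As $V_\nu$ is integrally closed and contains $P$, it contains $D$; and $\m_{V_\nu}\cap D$ is a prime of $D$ lying over $\m_P$, hence equal to $\m$, so $\nu$ dominates $D$. Also $D/\m$ is finite over $\kk_\mu$, hence embeds finitely into $\kk_\nu$. For the distinguished-vector axiom one combines the corresponding axiom for $P$ with the module-finiteness of $D$ over $P$; equivalently, since a complete local domain is analytically irreducible, one invokes an Izumi-type linear comparison $\iprod{\nu(x)}{\vv{a}}\leq C\cdot\operatorname{ord}_{\m}(x)$ valid for $0\neq x\in D$, and takes $\vv{v}\in\ZZ^d$ with positive coordinates and $\iprod{\vv{v}}{\vv{a}}\geq C$. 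Finally, the cone generated by $\nu(D)$ contains the full-dimensional pointed cone $\cone(\nu(P))$ and, one checks using that $D$ is integral over $P$, is itself pointed; hence $D$ is strongly dominated. Therefore $\nu$ is OK relative to $D$, which completes the proof.

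\emph{Main obstacle.} The descent along the normalization and completion maps is formal once Lemma~\ref{restriction of OK is OK: L} and the rational-rank count are in hand; the real content is the complete case, and within it the two delicate points are the distinguished-vector axiom — for which the Izumi--Rees linear-comparison estimates over an analytically irreducible local domain do the work — and the verification that $\nu(D)$ generates a pointed cone. These are precisely the technical inputs isolated in Cutkosky's treatment of good valuations, which one could alternatively cite directly.
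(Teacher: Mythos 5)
Your reductions are fine (the normalization and completion steps, and the rational-rank count for the restriction from $\hat D$ to $D$, are all in the spirit of the paper's Lemma~\ref{restriction of OK is OK: L}), but your treatment of the complete case contains a genuine gap at exactly the crucial point. Your plan is to \emph{ascend}: take a Noether normalization $P\subseteq D$ with $P$ a power series ring, use the monomial valuation of Example~\ref{power series ring is OK: E} on $P$, extend it to $\FF=\operatorname{Frac}(D)$, and then verify the conditions of Definition~\ref{OKvaluation: D} for $D$. Surjectivity, domination, and finiteness of the residue extension do follow as you say, but the distinguished-vector axiom for $D$ does not follow from ``the axiom for $P$ plus module-finiteness'': writing $x=\sum_j p_j e_j$ with $p_j\in P$, the value $\nu(x)$ can be much larger than $\min_j\bigl(\mu(p_j)+\nu(e_j)\bigr)$ because of cancellation, so a high value of $x$ gives no control on the coefficients, and the norm trick only bounds $\mu(N_{\FF/\operatorname{Frac}(P)}(x))$ from below, which is the wrong direction. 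Nor can you simply invoke Izumi--Rees: the classical Izumi/Rees linear comparison is a statement about divisorial (rank one, discrete) valuations centered at $\m$, whereas your $\nu$ is a rank-one valuation of rational rank $d$ whose value group is dense in $\RR$ for $d\geq 2$. The linear comparison $\iprod{\nu(x)}{\vv{a}}\leq C\cdot\operatorname{ord}_{\m}(x)$ for such Abhyankar valuations on an arbitrary complete (or excellent) local domain is itself a nontrivial theorem -- it is known in the smooth case by Ein--Lazarsfeld--Smith and in general its proofs pass through exactly the kind of birational modification you are trying to avoid -- so citing it here is essentially circular with respect to what the corollary is for. The same remark applies to your parenthetical ``one checks using that $D$ is integral over $P$'' that $\cone(\nu(D))$ is pointed: integrality only gives inequalities in the total order, not the coordinatewise control needed to keep the cone away from the hyperplane $\iprod{\cdot}{\vv{a}}=0$, and this too is left unproved.

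The paper avoids both issues by only ever \emph{descending}: it takes the normalized blowup of $\operatorname{Spec} D$ at $\m$, chooses a point $x$ over $\m$ at which the (normal, hence regular in codimension one) scheme is regular, and sets $A=\mathcal{O}_{X,x}$, a regular local ring essentially of finite type over $D$ with the same fraction field and dominating $D$. For $A$ the OK valuation is the monomial one (Example~\ref{RLR is OK: E}), where the values lie in $\NN^d$, so pointedness of the cone is free and the distinguished vector is immediate; then Lemma~\ref{restriction of OK is OK: L}, whose engine is H\"ubl's comparison $\n^{kl}\cap D\subseteq\m^k$, transfers the axiom \emph{down} to $D$. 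Transferring it \emph{up} a finite extension, as your argument requires, is not covered by that lemma and is precisely the hard analytic input. Your closing suggestion to ``cite Cutkosky directly'' would repair the proof, but that construction is the normalized-blowup argument just described, i.e.\ it amounts to adopting the paper's proof rather than completing yours.
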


\begin{proof}  The following construction appears in the proof of \cite[Corollary 4.3]{Cutkosky14}.  
Fix an excellent local domain $(D, \m)$, and let $\pi:  X  \to \operatorname{Spec} D$ be the normalization of the blowup of $\operatorname{Spec} D$ at $\m$.  As $X$ is normal (and hence, regular in codimension one), there exists a point $x \in X$ lying over $\m$ such that $A = \mathcal{O}_{X, x}$ is regular.    Moreover, since $D$ is excellent, we know that $A$ is essentially of finite type over $D$.  Example \ref{RLR is OK: E} tells us that $A$ is OK, and Lemma \ref{restriction of OK is OK: L} then implies that $D$ is OK.
\end{proof}

\subsection{Length computations}

In this subsection, we fix a valuation $\nu$ on the fraction field $\FF$ of a $d$-dimensional local domain $(D, \m, \kk)$ that is OK relative to $D$, and we adopt the notation established in Definition \ref{OKvaluation: D}.  Moreover, we use $S$ to denote the semigroup $\nu(D)$ in $\ZZ^d$.

\begin{remark}
\label{size of preimage: R}
  By assumption, $D$ is a local subring of $V$.  Therefore,  if $M$ is an $D$-submodule of $\FF$ and $\vv{u} \in \ZZ^d$, then the $D$-module structure on $M$ induces a $\kk$-vector space structure on the quotient
\[ \frac{ M \cap \FF_{ \geq \vv{u}}}{ M \cap \FF_{> \vv{u}}}.  \] 

Moreover, the obvious $\kk$-linear embedding 
\[ \frac{ M \cap \FF_{ \geq \vv{u}}}{ M \cap \FF_{> \vv{u}}}  \longhookrightarrow  \frac{ \FF_{\geq \vv{u}} }{\FF_{>\vv{u}}}\] and Remark \ref{dimension of quotient: R} show that the $\kk$-dimension of this quotient is at most most $[\kk_{V} : \kk]$.    By definition, this space is nonzero if and only if there exists an element $m \in M$ with $\nu(m) = \vv{u}$, and motivated by this, we may regard its $\kk$-dimension as a ``measure" of the size of the set of all elements in $M$ with $\nu$-value equal to $\vv{u}$. 
\end{remark}

\newcommand{\Th}{\nu^{ \hspace{.01in} (h)}}
\newcommand{\Tl}{\nu^{\hspace{.01in} (l)}}
\newcommand{\Tone}{\nu^{(1)}}
\renewcommand{\T}{\nu}

\begin{definition}
\label{T ideals: D}
If $M$ is a $D$-submodule of $\FF$ and $1 \leq h \leq [ \kk_{V} : \kk]$, then we define $\Th(M)$ to be the set of all vectors $\vv{u} \in \ZZ^d$ such that  
\[ \dim_{\kk} \( \frac{ M \cap \FF_{ \geq \vv{u}}}{ M \cap \FF_{> \vv{u}}} \) \geq h .\]  
As noted in Remark \ref{size of preimage: R}, we have that $\Tone(M) = \nu( M)$. 
\end{definition}

\begin{remark}
\label{T defines ideals: R}
Suppose $M$ is a $D$-submodule of $\FF$.  If $\vv{u} \in \ZZ^d$ and $g$ is a nonzero element of $D$ with $\vv{v}=\nu(g)$, then the map 
\[  \frac{ M \cap \FF_{ \geq \vv{u}}}{ M \cap \FF_{> \vv{u}}}  \longrightarrow   \frac{ M \cap \FF_{ \geq \vv{u} + \vv{v} }}{ M \cap \FF_{> \vv{u}+\vv{v}}} \] 
defined by $[m] \mapsto [gm]$ is a $\kk$-linear injection.  In particular, 
 \[ \Th(M) + S \subseteq \Th(M), \] and so $\Th(M)$ is an ideal of $S$ for every $1 \leq h \leq [\kk_{V}: \kk]$.  
 \end{remark}

\begin{lemma}  
\label{lengths via counting points: L}
If $M$ is a $D$-submodule of $\FF$ and $\vv{v} \in \ZZ^d$, then \[ \ell_D \left( M / M \cap \FF_{\geq \vv{v}} \right) = \sum \limits_{h=1}^{[ \kk_{V}: \kk]}  \# ( \Th(M) \cap \mathscr{H} ),\] where $\mathscr{H}$ is the halfspace $\{ \vv{u} \in \RR^d :  \iprod{\vv{u}}{\vv{a}} < \iprod{\vv{v}}{\vv{a}} \} $. 
\end{lemma}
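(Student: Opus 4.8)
The plan is to build a filtration of the $D$-module $N := M/(M \cap \FF_{\geq \vv{v}})$ indexed by the value group $\ZZ^d$, using the order induced by $\vv{a}$, and to compute its length by summing the $\kk$-dimensions of the successive quotients. Concretely, for each $\vv{u} \in \ZZ^d$ with $\iprod{\vv{u}}{\vv{a}} < \iprod{\vv{v}}{\vv{a}}$, consider the $D$-submodule $M \cap \FF_{\geq \vv{u}}$ of $M$; as $\vv{u}$ decreases in the linear order these submodules increase, and their union (over all $\vv{u}$ with $\iprod{\vv{u}}{\vv{a}} < \iprod{\vv{v}}{\vv{a}}$, together with the image of $M\cap\FF_{\geq\vv v}$ as the bottom step) exhausts $M$, since every nonzero element of $M$ has a well-defined $\nu$-value and, because $\FF_{\geq\vv v}\subseteq M$ is what we quotient by, only finitely many value levels $\vv u$ with $\iprod{\vv u}{\vv a}<\iprod{\vv v}{\vv a}$ actually contribute (one should check this finiteness using the OK hypothesis, which bounds $M\cap\FF_{\geq a\vv v_0}$ inside $\m^a$ and hence forces the relevant quotient to have finite length). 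The successive quotient at level $\vv{u}$ is exactly
\[
\frac{M \cap \FF_{\geq \vv{u}}}{M \cap \FF_{> \vv{u}}},
\]
which by Remark \ref{size of preimage: R} is a finite-dimensional $\kk$-vector space of dimension at most $[\kk_V : \kk]$.

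The second step is the bookkeeping identity: the length of $N$ over $D$ equals $\sum_{\vv{u}} \dim_{\kk}\!\big( (M\cap\FF_{\geq\vv u})/(M\cap\FF_{>\vv u})\big)$, where the sum runs over $\vv{u}\in\ZZ^d$ with $\iprod{\vv u}{\vv a}<\iprod{\vv v}{\vv a}$ — here I use that $\ell_D$ of a $\kk$-vector space of dimension $n$ is $n$, since $\kk = D/\m$. Then I rewrite each dimension as a sum of indicator contributions: $\dim_\kk\big((M\cap\FF_{\geq\vv u})/(M\cap\FF_{>\vv u})\big) = \#\{\,h : 1\le h\le[\kk_V:\kk],\ \vv u\in\Th(M)\,\}$, which is immediate from Definition \ref{T ideals: D}. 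Swapping the order of the (finite) double summation over $\vv u$ and $h$ converts $\sum_{\vv u}\dim_\kk(\cdots)$ into $\sum_{h=1}^{[\kk_V:\kk]} \#\{\vv u : \iprod{\vv u}{\vv a}<\iprod{\vv v}{\vv a},\ \vv u\in\Th(M)\} = \sum_{h=1}^{[\kk_V:\kk]} \#(\Th(M)\cap\mathscr H)$, which is the claimed formula.

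The main obstacle I anticipate is making the filtration argument rigorous despite $\ZZ^d$ being only linearly (not well-)ordered in general: one needs to argue that $\{\vv u\in\ZZ^d : \iprod{\vv u}{\vv a}<\iprod{\vv v}{\vv a},\ (M\cap\FF_{\geq\vv u})/(M\cap\FF_{>\vv u})\neq 0\}$ is finite, so that one genuinely has a finite filtration $0 = N_0 \subseteq N_1 \subseteq \cdots \subseteq N_r = N$ with the prescribed quotients, rather than an infinite or transfinite one. This finiteness is where the OK hypothesis enters: the existence of $\vv v_0$ with $D\cap\FF_{\geq a\vv v_0}\subseteq\m^a$ shows that the nonzero-value levels of $M$ below a fixed threshold are finite in number (a standard consequence, cf. the estimates used by Kaveh–Khovanskii and Cutkosky), since $\FF_{\geq\vv v}$ being contained in $M$ together with that containment forces $M/(M\cap\FF_{\geq\vv v})$ to have finite length, and a module of finite length admits only finitely many nonzero graded pieces in any $\ZZ^d$-graded-type filtration of this shape. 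Once finiteness is in hand, the rest is the routine additivity of length on short exact sequences and the interchange of two finite sums.
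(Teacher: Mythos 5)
Your route is the same as the paper's: filter $M/(M\cap\FF_{\geq\vv{v}})$ by the occupied $\nu$-values below $\vv{v}$, identify the successive quotients with $(M\cap\FF_{\geq\vv{u}})/(M\cap\FF_{>\vv{u}})$, observe that $\ell_D$ of such a subquotient is its $\kk$-dimension, rewrite that dimension as the number of indices $h$ with $\vv{u}\in\nu^{(h)}(M)$, and interchange the two finite sums; the paper carries out exactly this computation, phrased as an induction on the points of $\nu(M)\cap\mathscr{H}$ listed in increasing order. So the decomposition and the bookkeeping are right.

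The step you yourself flag as the main obstacle is, however, the one place your justification does not work as written. The containment ``$\FF_{\geq\vv{v}}\subseteq M$'' is false in general (already for $M=D$), so the sentence resting on it collapses; and the OK condition $D\cap\FF_{\geq a\vv{v}_0}\subseteq\m^a$ points the wrong way for your purpose: it puts high-valuation elements deep inside $\m$, whereas to get finite length of $M/(M\cap\FF_{\geq\vv{v}})$ a priori you would need a containment of the shape $\m^{N}M\subseteq\FF_{\geq\vv{v}}$ for $N\gg 0$ (which does hold for the modules at issue, but follows from domination and additivity of values applied to a finite generating set of $\m$, not from the distinguished vector). Moreover, deducing ``finitely many occupied levels'' from ``finite length'' is essentially circular, since that length is what the lemma computes. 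The finiteness you need is much more direct, and is how the paper gets it: the occupied levels below $\vv{v}$ are precisely the points of $\nu(M)\cap\mathscr{H}$, and in the situation where the lemma is used ($M$ an ideal of $D$, so that $\nu(M)\subseteq S\subseteq C$) this set lies in the truncation $C\cap\mathscr{H}$ of the pointed cone $C$, which is bounded and hence contains only finitely many lattice points. (If $\nu(M)\cap\mathscr{H}$ were infinite, finite sub-filtrations show both sides of the asserted identity are infinite, so nothing is lost.) With that replacement your argument is complete and coincides with the paper's.
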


\begin{proof}   Set $l= [\kk_{V} : \kk]$.  By definition of the descending chain of semigroup ideals $\T (M) \supseteq \cdots \supseteq \Tl(M)$ of $S$,  if $\vv{u} \in \T(M)$, then 
\[  \dim_{\kk} \(  \frac{ M \cap \FF_{ \geq \vv{u}}}{ M \cap \FF_{> \vv{u}}} \) =  \# \set{ 1 \leq h \leq l : \vv{u} \in \Th(M)}. \]

As $\mathscr{H}$ is a truncating halfspace for $C$, the intersection $\T(M) \cap \mathscr{H}$ is finite.  If this set is empty, then the lemma is trivial.  Otherwise, this set consists of points  $\vv{u}_m > \cdots > \vv{u}_1$.  By definition of the order on $\ZZ^d$, the point $\vv{u}_m$ is the maximal vector in $\T (M)$ less than $\vv{v}$, and so $M \cap \FF_{\geq \vv{v}} = M \cap \FF_{> \vv{u}_m}$.  Consequently, the length of $M / (M \cap \FF_{\geq \vv{v}})$ equals the length of $M / (M \cap \FF_{> \vv{u}_m})$, which in turn equals 
\[ \ell_D \( \frac{ M \cap \FF_{\geq \vv{u}_m}}{M \cap \FF_{> \vv{u}_m}} \)  + \ell_D \( \frac{ M}{M \cap \FF_{\geq \vv{u}_m}} \). \]

Inducing on $m$ then shows that 
\[  \ell_D \( \frac{ M}{M \cap \FF_{\geq \vv{v}}} \) = \sum_{k=1}^m  \ell_D \( \frac{ M \cap \FF_{\geq \vv{u}_k}}{M \cap \FF_{> \vv{u}_k}} \),  \] 
which by an earlier observation equals
\[  \sum_{k=1}^m \#  \set{ 1 \leq h \leq l : \vv{u}_k \in \Th(I)}  .\] 

Finally, setting $E_{k,h} = 1$ if $\vv{u}_k \in \Th(I)$ and $E_{k,h} = 0$ otherwise, we may rewrite this expression to see that
$\ell_D \( { M} / {M \cap \FF_{\geq \vv{v}}} \)$ equals
\[ \sum_{k=1}^m \sum_{h=1}^l E_{k,h}  = \sum_{h=1}^l \sum_{k=1}^m E_{k,h} = \sum_{h=1}^{l}  \size{\( \Th(I) \cap \mathscr{H} \)}.
\] 
\end{proof}

In the next lemma, we see that the ideals $\Th(M)$ can be approximated by the ideals $\T(M)$ in a uniform way.  

\begin{lemma} 
\label{approximation: L}
There exists an element $\vv{u}$ in $S$ such that \[ \T(M) + \vv{u} \subseteq \Th(M) \subseteq \T(M) \] for every $D$-submodule $M$ of $\FF$ and integer $1 \leq h \leq [\kk_{V} : \kk]$.
\end{lemma}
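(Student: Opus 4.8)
The plan is to treat the two inclusions separately. The inclusion $\Th(M) \subseteq \T(M)$ is immediate: by Definition~\ref{T ideals: D} we have $\T(M) = \Tone(M)$, and any vector whose associated leaf has $\kk$-dimension at least $h$ certainly has leaf of dimension at least $1$, hence lies in $\Tone(M)$. So the content is entirely in producing an element $\vv{u}$ of $S$ that works, uniformly in $M$ and $h$, for the inclusion $\T(M) + \vv{u} \subseteq \Th(M)$.

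First I would reduce to a statement about $D$ alone. I claim it suffices to find a single $\vv{u} \in S$ with $\dim_{\kk}\bigl( (D \cap \FF_{\geq \vv{u}})/(D \cap \FF_{>\vv{u}}) \bigr) = [\kk_{V}:\kk] =: l$, i.e.\ the maximal value permitted by Remark~\ref{size of preimage: R}. Indeed, suppose $\vv{w} \in \T(M) = \nu(M)$ and choose a nonzero $m \in M$ with $\nu(m) = \vv{w}$. Multiplication by $m$ carries $D \cap \FF_{\geq \vv{u}}$ bijectively onto $mD \cap \FF_{\geq \vv{w}+\vv{u}}$ and $D \cap \FF_{>\vv{u}}$ onto $mD \cap \FF_{>\vv{w}+\vv{u}}$, hence identifies the leaf of $D$ at $\vv{u}$ with the leaf of the submodule $mD$ at $\vv{w}+\vv{u}$ as $\kk$-vector spaces; and since $mD \subseteq M$, that leaf maps $\kk$-linearly and injectively into the leaf of $M$ at $\vv{w}+\vv{u}$. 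Therefore $\dim_{\kk}\bigl( (M \cap \FF_{\geq \vv{w}+\vv{u}})/(M \cap \FF_{>\vv{w}+\vv{u}}) \bigr) \geq l \geq h$, i.e.\ $\vv{w}+\vv{u} \in \Th(M)$, as desired.

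To construct such a $\vv{u}$: choose elements $c_1,\dots,c_l \in V$ whose residues form a $\kk$-basis of $\kk_V$ (using that $\kk \hookrightarrow \kk_V$ is finite of degree $l$). Since each residue is nonzero, each $c_i$ is a unit of $V$, so $\nu(c_i) = \vv{0}$. Writing $c_i = a_i/b_i$ with $a_i,b_i \in D$ (possible as $\FF = \operatorname{Frac}(D)$), put $b = b_1 b_2 \cdots b_l \in D \setminus \{0\}$ and $\vv{u} := \nu(b) \in S$. Then $b c_i \in D$ and $\nu(b c_i) = \vv{u}$ for every $i$, so the classes of $bc_1,\dots,bc_l$ all lie in the leaf of $D$ at $\vv{u}$; I claim they are $\kk$-linearly independent there. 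A relation $\sum_i \lambda_i [bc_i] = 0$ with $\lambda_i \in \kk$ lifted to $D$ means $b\sum_i \lambda_i c_i \in \FF_{>\vv{u}}$, i.e.\ $\sum_i \lambda_i c_i \in \m_V$; reducing modulo $\m_V$ and using that $\m = \m_V \cap D$ (so $D/\m \hookrightarrow V/\m_V$ takes each $\lambda_i$ to itself) gives a $\kk$-dependence among the residues of the $c_i$, forcing all $\lambda_i = 0$. Hence the leaf has dimension $\geq l$, so $= l$.

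I expect the only genuine subtlety to be in this last step, namely the point that the $c_i$ can be chosen to be \emph{units} of $V$, so that after clearing denominators all of $bc_1,\dots,bc_l$ acquire the \emph{same} $\nu$-value $\vv{u}$ and thus occupy a single leaf; without this they would land in different leaves and the dimension count would fail. The remaining manipulations are formal, using only the valuation axioms and the $\kk$-vector space structures already established in Remarks~\ref{dimension of quotient: R} and~\ref{size of preimage: R}. (It is worth noting that no use is made here of the defining ``$D \cap \FF_{\geq a\vv{v}} \subseteq \m^a$'' condition of an OK valuation; only strong domination and finiteness of the residue extension are needed for this lemma.)
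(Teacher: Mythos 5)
Your proof is correct and follows essentially the same route as the paper: the paper likewise fixes $g, g_1,\dots,g_l \in D$ of common value $\vv{u}$ with $g_1/g,\dots,g_l/g$ a $\kk$-basis of $\kk_V$ (your $b$ and $bc_i$, obtained by clearing denominators of a lifted basis) and then uses multiplication by $m\in M$ to see that the leaf of $M$ at $\nu(m)+\vv{u}$ has full dimension $l\geq h$. Your version merely reorders this into a statement about the leaf of $D$ at $\vv{u}$ followed by the multiplication-by-$m$ injection, and fills in the existence argument the paper leaves implicit.
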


\begin{proof}  Set  $l = [\kk_{V} : \kk]$.  By definition,  $\Th(M)$ lies in $\T(M)$.  To establish the opposite inclusion,  fix elements $g_1, \dots, g_l$ and $g$ in $D$ with \[ \vv{u} := \nu(g_1) = \cdots = \nu(g_l) = \nu(g), \] and such that the classes of the fractions $g_1/g, \dots, g_l/g$ form a basis for $\kk_{V}$ over $\kk$.   If $m$ is a nonzero element in $M$ with $\vv{v} = \nu(m)$, then it is not difficult to see that the classes of $g_1 m, \dots, g_l m$ in  \[  \frac{ M \cap \FF_{ \geq \vv{u} + \vv{v}}}{ M \cap \FF_{> \vv{u} + \vv{v}}}\]
are linearly independent over $\kk$, and hence, form a $\kk$-basis for this space.  As $m \in M$ was arbitrary, this shows that $\vv{u} + \T(M)$ is contained in $\Tl(M)$, and therefore, lies in $\Th(M)$ for all $1 \leq h \leq l$.
\end{proof}


\section{Semigroups, $p$-systems, and $p$-bodies}\label{semigroups-p-stuff}

\begin{definition}
A semigroup $S$ in $\ZZ^d$ is \emph{standard} if $S-S = \ZZ^d$, and the full-dimensional cone generated by $S$ in $\RR^d$ is pointed.
\end{definition}

\begin{example}
In this paper, the main example of a standard semigroup in $\ZZ^d$ is the semigroup $\nu(D)$ associated to a valuation $\nu$ that is OK relative to some $d$-dimensional local domain $D$. 
\end{example}

\begin{definition}
\label{pSystem: D}
A sequence of ideals $T_{\bullet} = \set{ T_{q}}_{q=1}^{\infty}$ of a semigroup whose terms are indexed by the powers of $p$ is called a \emph{$p$-system} whenever $pT_{q}$ lies in $T_{pq}$ for every $q$ a power of $p$.
\end{definition}

\begin{definition}
\label{pBody: D}
Let $S$ be a standard semigroup in $\ZZ^d$.  Given a $p$-system of ideals $T_{\bullet}$ of $S$, we set 
\[ {\Delta}_{q}(S,T_{\bullet})  = \frac{1}{q} \cdot T_{q} + \cone(S), \] 
and we call the ascending union of sets
\[{\Delta}(S,T_{\bullet}) = \bigcup_{q=1}^{\infty} \Delta_{q}(S,T_{\bullet})\]
the \emph{$p$-body} associated to $(S, T_{\bullet})$.   Equivalently,  \[ \Delta(S, T_{\bullet}) = \left( \ \bigcup_{q=1}^{\infty} \ \frac{1}{q} \cdot T_{q} \right) + \cone(S).  \]

\end{definition}

Though Definition \ref{pBody: D} makes sense for an arbitrary semigroup, the condition that $S$ is standard will become important when establishing the basic properties of $p$-bodies.

\begin{example}
\label{constant p-system: E}
Given an arbitrary subset $T$ of $S$, setting $T_q = T+S$ for every $q$ defines a $p$-system of ideals $T_{\bullet}$ of $S$.  Indeed, each $T_q$ is clearly an ideal, and
$p T_q = pT + pS$ lies in \[ T + (p-1) T + S,\] which in turn is a subset of $T_{pq} = T+S$.  In this case, \[ \Delta_q(S, T_{\bullet}) = (1/q) \, T + \cone(S),\] and the closure of $\Delta(S, T_{\bullet})$ equals $\cone(S)$.
\end{example}

\begin{example}
\label{staircase: E}
Given an arbitrary subset $T$ of  $S$, setting $T_{q} = qT + S$ defines a $p$-system of ideals $T_{\bullet}$ of $S$ such that \[ \Delta(S, T_{\bullet}) = \Delta_{q}(S, T_{\bullet}) = T + \cone(S) \] for every $q$ a power of $p$.
\end{example}

\begin{example}
\label{sum of p-systems: E}
  Given $p$-systems $T_{\bullet}$ and $\U$ in $S$, the sequence of ideals $T_{\bullet}+\U$ whose $q$-th term is $T_q + U_q$ is also a $p$-system, and \[ \Delta_q(S, T_{\bullet} + \U) = \Delta_q(S, T_{\bullet}) + \Delta_q(S, \U) \] for every $q$ a power of $p$.  
\end{example}

As illustrated by Example \ref{staircase: E}, $p$-bodies need not be convex, and in applications, they are typically not. 
On the other hand, we see from  Definition \ref{pBody: D} that the Minkowski sum of a $p$-body $\Delta$ and the full-dimensional pointed cone generated by the ambient standard semigroup lies in $\Delta$.  As we see in the next example that this property characterizes $p$-bodies,  up to closure.

\begin{example}  Let $C$ be a full-dimensional pointed cone in $\RR^d$, and let $D$ be any subset of $C$ such that $D + C \subseteq D$.  If $S = C \cap \ZZ^d$ and  $T_q = (qD) \cap \ZZ^d$,  then the closures of ${\Delta(S, T_{\bullet})}$ and $D$ are equal:  The hypothesis that $C$ is full-dimensional implies that there exists a nonempty open ball $B$ centered at some point of $C$ with $B \subseteq C$.  After rescaling, we may assume that $B$ has radius greater than one, so that every translation of $B$ by a point in $\ZZ^d$ contains a lattice point.  Fix $\vv{u} \in D$, and for every $q$, a vector $\vv{v}_q \in B$ such that $q \vv{u} + \vv{v}_{q} \in \ZZ^d$.  By definition, $(1/q) \vv{v}_q$ lies in $(1/q)B$, which itself is contained in $C$.  Thus, 
\[ q \vv{u} + \vv{v}_q  \in q ( D + C) \cap \ZZ^d = T_q, \] 
and the fact that $\vv{v}_q \in B$ for all $q$ shows that the points 
\[ \vv{u} + (1/q) \vv{v}_q \in \Delta_q(S, T_{\bullet}) \]
 define a sequence in $\Delta(S, T_{\bullet})$ converging to $\vv{u}$.  We conclude that $D$, and hence its closure, is contained in the closure of $\Delta(S, T_{\bullet})$.  The remaining details are left to the reader.
 \end{example}
 
 \begin{remark} One may summarize the previous example and the discussion above it as follows: A closed region $\Delta$ is the closure of a $p$-body if and only if it is a $\cone(S)$-convex region.
 \end{remark}

Every $p$-body $\Delta$  is the union of countably many translates of $C$, and is contained entirely in $C$, and so is therefore Lebesgue measurable.  It follows that if $H$ is a truncating halfspace of $C$, then the volume of $\Delta \cap H$ is a well-defined real number.  The following theorem, which is concerned with volumes of this form, is the main result of this section.  

\begin{theorem}
\label{plimit} 
Let $S$ be a standard semigroup in $\ZZ^d$.  If $T_{\bullet}$ is a $p$-system in $S$, and $H$ is any truncating halfspace for $\cone(S)$, then 
\[ \lim_{q \rightarrow \infty} \frac{{\#} \( T_{q} \cap qH \)}{q^d} = \vol_{\RR^d} (\Delta(S,T_{\bullet}) \cap H). \]
\end{theorem}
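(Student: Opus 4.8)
The plan is to establish the two inequalities
\[ \limsup_{q\to\infty} \frac{\#(T_q\cap qH)}{q^d}\le \vol_{\RR^d}(\Delta(S,T_\bullet)\cap H)\le\liminf_{q\to\infty}\frac{\#(T_q\cap qH)}{q^d}\]
separately, exploiting the $p$-system hypothesis to relate the sets $T_q$ for different $q$ and a lattice-point-counting principle to pass between cardinalities and volumes. Throughout, write $C=\cone(S)$, fix $\vv a\in\RR^d$ inducing the order with $\iprod{\vv u}{\vv a}>0$ on $C\setminus 0$, and normalize $H=\{\vv u:\iprod{\vv u}{\vv a}<1\}$ so that $qH=\{\vv u:\iprod{\vv u}{\vv a}<q\}$.

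For the easy ($\le$) direction: the key structural fact from the $p$-system condition $pT_q\subseteq T_{pq}$ is that $\frac1q T_q\subseteq\frac1{pq}T_{pq}$, so the sets $\frac1q T_q$ form an increasing chain whose union, after adding $C$, is $\Delta$. Given a point $\vv x\in\frac1q T_q+C$ that lies in $H$, i.e. $\vv x=\frac1q\vv t+\vv c$ with $\vv t\in T_q\cap qH'$ (for a slightly larger truncation, using $\iprod{\vv c}{\vv a}\ge 0$), I would assign to it the half-open unit cube $\vv x+[0,1/q)^d$ (translated by a lattice basis), which is contained in a fixed bounded enlargement of $\Delta\cap H$; since distinct lattice points $\vv t$ give disjoint cubes of volume $q^{-d}$, this yields $\#(T_q\cap qH)\cdot q^{-d}\le\vol(\text{enlargement})$, and letting the enlargement shrink gives the $\limsup$ bound. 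This is essentially a Riemann-sum estimate and is routine.

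The hard direction is ($\ge$): showing the point-counts eventually fill up essentially all of $\vol(\Delta\cap H)$. Here is where I expect the main obstacle, and where the standardness of $S$ (and Theorem~\ref{ConeTheorem: T}) must be used. The idea: fix $\epsilon>0$ and choose $q_0$ large enough that $\frac1{q_0}T_{q_0}+C$ approximates $\Delta$ within $\epsilon$ in volume inside $H$. Pick finitely many points $\vv x_1,\dots,\vv x_N\in\frac1{q_0}T_{q_0}$ whose translates $\vv x_i+C$ cover all but $\epsilon$-volume of $\Delta\cap H$; replacing $C$ by a slightly smaller full-dimensional pointed subcone $\mathscr C$ whose boundary meets $\partial C$ only at the origin, arrange that $\bigcup_i(\vv x_i+\mathscr C)$ still captures all but $2\epsilon$-volume. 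By Theorem~\ref{ConeTheorem: T}, there is a truncating halfspace $\mathscr H$ with $(\mathscr C\cap\ZZ^d)\setminus\mathscr H\subseteq S$. Now for $q=p^e q_0$ a multiple of $q_0$, the $p$-system condition iterated gives $\frac{q}{q_0}\vv x_i\in T_q$ (after clearing denominators: $(q/q_0)T_{q_0}=p^eT_{q_0}\cdot\frac{q_0}{q_0}\subseteq$ ... more carefully, $p^e T_{q_0}\subseteq T_{p^eq_0}=T_q$ by induction), hence $\frac{q}{q_0}\vv x_i+\big((\mathscr C\cap\ZZ^d)\setminus\mathscr H\big)\subseteq T_q+S=T_q$. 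Counting lattice points in this union inside $qH$: for each fixed $i$, the number of lattice points of $\frac{q}{q_0}\vv x_i+\mathscr C$ in $qH$ and outside $\frac{q}{q_0}\mathscr H$ is $q^d\vol(\vv x_i+\mathscr C)\cap H)+o(q^d)$ (standard lattice-point asymptotics for a fixed polyhedral-ish region scaled by $q$; the error from the truncation $\mathscr H$ is also $o(q^d)$ since a fixed truncation scaled by $q$ has volume $O(q^{d-1})$... actually $O(q^d)$ is wrong — it scales like $q^d$, so instead one truncates with a fixed halfspace independent of $q$, whose scaled contribution is $O(1)$ relative... this needs care: the truncating halfspace $\frac{q}{q_0}\mathscr H$ does scale, so one must instead fix $\mathscr H$ absolutely and note its lattice-point count is $O(1)$, negligible). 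Then inclusion–exclusion over the $N$ sets $\frac{q}{q_0}\vv x_i+\mathscr C$ gives $\#(T_q\cap qH)q^{-d}\ge\vol\big(\bigcup_i(\vv x_i+\mathscr C)\cap H\big)-o(1)\ge\vol(\Delta\cap H)-2\epsilon-o(1)$. Finally, to handle $q$ not divisible by $q_0$: every power of $p$ is either already a multiple of $q_0$ or, multiplying by a bounded factor, becomes one — but since we need a bound for \emph{all} $q\to\infty$, apply the argument with $q_0$ replaced by each of the finitely many $q_0,pq_0$ suitably, or simply observe $\liminf$ over the subsequence $q\in q_0\ZZ$ suffices because every tail of $\{p^e\}$ eventually lands in $q_0\ZZ$ once $e\ge e_0$. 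The main technical mass is making the lattice-point counting uniform and controlling all the error terms simultaneously over the finite cover; I would isolate the counting estimate (scaled lattice points in a fixed pointed-cone-minus-bounded-truncation region grow like $q^d$ times volume) as a preliminary lemma before assembling the proof.
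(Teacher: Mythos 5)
Your proposal is correct and follows essentially the same route as the paper's proof: the upper bound comes from the containment $T_q\cap qH\subseteq q\,(\Delta(S,T_{\bullet})\cap H)$ together with the Riemann-sum lattice-counting principle (Proposition~\ref{RiemannIntegral: P}, resting on the measure-zero boundary of such regions, Corollary~\ref{measureZeroBoundary: C}), and the lower bound uses Theorem~\ref{ConeTheorem: T} for an interior subcone $\mathscr{C}$, the iterated $p$-system containment $p^{e}T_{q_0}\subseteq T_{q}$, and a discarded truncation before counting lattice points, just as in the paper's argument with the regions $M_{r,s}$. The differences are cosmetic and harmless: you keep the truncating halfspace $\mathscr{H}$ fixed and discard $O(1)$ lattice points where the paper removes the scaled truncation $\frac{1}{s}\mathscr{H}$ and lets $s\to\infty$, no inclusion--exclusion is actually needed (counting the union region suffices), and the translated apexes should read $q\vv{x}_i\in T_q$ rather than $\frac{q}{q_0}\vv{x}_i$.
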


The remainder of this section is dedicated to proving Theorem \ref{plimit}, and an important corollary (Corollary \ref{generalSubsetVolume: C}).  We begin with some useful lemmas, all of which hold in the context of Setup \ref{convexLemmas: SU}.

\begin{setup}
\label{convexLemmas: SU}
Let $C$ be a full-dimensional pointed cone, and $\Gamma$ be the points in $C$ lying outside some (possibly, empty) truncation of $C$.  Fix a countable subset $V$ of $C$, and set $U = V + \Gamma$.
\end{setup}

\begin{lemma}
\label{slideIntoInterior: L}  In the context of Setup \ref{convexLemmas: SU}, $\closure{U} + \interior{C}$ lies in $\interior{U}$. 
\end{lemma}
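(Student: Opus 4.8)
The plan is to unfold the definitions of $\closure{U}$, $\interior{C}$, and $\interior{U}$, and exploit the $C$-convexity of $U$ together with the fact that $C$ is a cone with nonempty interior.

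\medskip

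\textbf{Setup of the argument.} Let $\vv{x} \in \closure{U}$ and $\vv{c} \in \interior{C}$; I want to show $\vv{x} + \vv{c} \in \interior{U}$, i.e., that some open ball around $\vv{x}+\vv{c}$ is contained in $U$. First I would record two elementary facts about $U = V + \Gamma$. Since $\Gamma$ is the set of points of $C$ outside a truncation, $\Gamma$ is a $C$-convex region (Example~\ref{C-convex region: R}), so $C + \Gamma \subseteq \Gamma$; hence $U + C = V + \Gamma + C \subseteq V + \Gamma = U$, so $U$ is $C$-convex. In particular $\closure{U} + C \subseteq \closure{U}$ as well (take limits), though what I really need is the sharper statement with $\interior{C}$.

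\medskip

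\textbf{Key step: absorbing an open ball.} The crux is that a point of $\interior{C}$ can ``absorb'' a small ball into $C$. Concretely: if $\vv{c} \in \interior{C}$, there is $\error > 0$ such that the open ball $B(\vv{c}, \error) \subseteq C$. I claim then that for \emph{every} $\vv{y} \in C$ we have $B(\vv{y} + \vv{c}, \error) \subseteq C$: indeed $B(\vv{y}+\vv{c},\error) = \vv{y} + B(\vv{c},\error) \subseteq C + C \subseteq C$ since $C$ is a cone (closed under addition). Now take $\vv{x} \in \closure{U}$: there is a point $\vv{u} = \vv{v} + \vv{g} \in U$ (with $\vv{v}\in V$, $\vv{g}\in\Gamma \subseteq C$) within distance $\error/2$ of $\vv{x}$, say $\vv{x} = \vv{u} + \vv{w}$ with $\|\vv{w}\| < \error/2$. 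Then
\[
B(\vv{x} + \vv{c}, \error/2) = \vv{w} + B(\vv{u}+\vv{c}, \error/2) \subseteq B(\vv{u} + \vv{c}, \error),
\]
and $B(\vv{u}+\vv{c},\error) = \vv{v} + B(\vv{g} + \vv{c}, \error) \subseteq \vv{v} + C$ by the claim applied with $\vv{y} = \vv{g} \in C$. Finally $\vv{v} + C$: I need this inside $U = V + \Gamma$, not just $V + C$. Here I use $C$-convexity of $\Gamma$ once more — pick any $\vv{g}_0 \in \Gamma$ (if $\Gamma$ is empty the statement about $\interior{U}$ is vacuous since then $U=\emptyset$); then $\vv{g}_0 + C \subseteq \Gamma$, so I should instead aim the ball inside $\vv{v} + \vv{g}_0 + C \subseteq \vv{v} + \Gamma \subseteq U$. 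To make this work I re-center: since $\vv{g} \in \Gamma$ and $\Gamma + C \subseteq \Gamma$, actually $\vv{g} + \vv{c}' \in \Gamma$ for any $\vv{c}' \in C$, and combining, $\vv{v} + \vv{g} + C \subseteq U$; so it suffices to show $B(\vv{x}+\vv{c}, \error/2) \subseteq \vv{v} + \vv{g} + C = \vv{u} + C$, which follows from $B(\vv{u}+\vv{c},\error) \subseteq \vv{u} + C$ (the claim with $\vv{y}=\vv{0}$, shifted by $\vv{u}$) and the containment displayed above.

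\medskip

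\textbf{Anticipated obstacle.} The only subtlety — and the step I'd be most careful about — is bookkeeping the decomposition $\vv{u} = \vv{v} + \vv{g}$ so that the absorbed ball lands in $\vv{v} + \Gamma$ rather than merely $\vv{v} + C$; this is exactly where the $C$-convexity of $\Gamma$ (equivalently, that $\Gamma$ is ``outside a truncation'') is used, as opposed to working with a bare cone. Everything else is routine manipulation of Minkowski sums and open balls. I would present the claim ``$\vv{y} + B(\vv{c},\error) \subseteq C$ for all $\vv{y}\in C$'' as the single reusable observation and then chain the inclusions.
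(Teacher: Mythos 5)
Your proof is correct and follows essentially the same route as the paper's: both arguments approximate the point of $\closure{U}$ by a nearby point $\vv{u} \in U$ and use the slack of an interior point of $C$ (your ball $B(\vv{c},\error)\subseteq C$, the paper's ball $W$ with $\vv{c}+W\subseteq \interior{C}$) to absorb the perturbation, with the $C$-convexity of $\Gamma$ guaranteeing $\vv{u}+C\subseteq U$. The only cosmetic difference is that the paper first records $U+\interior{C}\subseteq\interior{U}$ and then perturbs, whereas you exhibit the explicit ball $B(\vv{x}+\vv{c},\error/2)\subseteq \vv{u}+C\subseteq U$ directly.
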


\begin{proof} As observed in Remark \ref{C-convex region: R}, $\Gamma + C$ lies in $\Gamma$.  We also note that $U + \interior{C}$ is contained in $\interior{U}$.  Indeed, given $\vv{v} \in V$ and ${\vv{w}} \in \Gamma$,
\[ \vv{v}+{\vv{w}}+\interior{C} = \vv{v} + \interior{( {\vv{w}}+C)} \subseteq \vv{v} + \interior{\Gamma}  = \interior{(\vv{v} +\Gamma)} \subseteq \interior{U}.\]  

    Choose $\vv{u} \in \closure{U}$ and $\vv{c} \in \interior{C}$.  To complete the proof, we must show that $\vv{u} + \vv{c} \in \interior{U}$. By our choice of $\vv{c}$, there exists an open ball $W$ centered at the origin such $\vv{c}+ W \subseteq \interior{C}$, and as $\vv{u} \in \closure{U}$, the open set $\vv{u} + W$ must intersect $U$.  Fix a point $\vv{z}$ in the intersection of $\vv{u} + W$ and $U$, so that $\vv{u}-\vv{z}$ is in $-W= W$.  It follows that
\[\vv{u} + \vv{c} = \vv{z} + \vv{c} + \vv{u} - \vv{z} \in \vv{z} + \vv{c} + W \subseteq \vv{z} + \interior{C} \subseteq U + \interior{C} \subseteq \interior{U},\]
which allows us to conclude the proof.\end{proof}

\begin{corollary}  
\label{measureZeroBoundary: C} 
In the context of Setup \ref{convexLemmas: SU}, $\vol_{\RR^d}(\partial U) = 0$.
\end{corollary}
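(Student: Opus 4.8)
The plan is to deduce the vanishing of $\vol_{\RR^d}(\partial U)$ directly from Lemma~\ref{slideIntoInterior: L}, which already tells us that $\closure{U} + \interior{C}$ lies in $\interior{U}$. The key observation is that $\partial U = \closure{U} \setminus \interior{U}$, and we will show this set is ``thin'' in the direction of any fixed interior vector of $C$, hence Lebesgue-null by Fubini.

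First I would fix a vector $\vv{c}$ in $\interior{C}$; such a vector exists since $C$ is full-dimensional. The main claim is that for every $\vv{u} \in \closure{U}$ and every real $t > 0$, the point $\vv{u} + t\vv{c}$ lies in $\interior{U}$: indeed $t\vv{c} \in \interior{C}$ because $\interior{C}$ is a cone (stable under multiplication by positive scalars), so this is immediate from Lemma~\ref{slideIntoInterior: L}. Consequently, for each point $\vv{x} \in \RR^d$, the set of $t \in \RR$ with $\vv{x} + t\vv{c} \in \partial U$ has at most one element: if $\vv{x} + t_0 \vv{c} \in \partial U \subseteq \closure{U}$, then for every $t > t_0$ the point $\vv{x} + t\vv{c} = (\vv{x} + t_0\vv{c}) + (t - t_0)\vv{c}$ lies in $\interior{U}$ and hence not in $\partial U$, while $\partial U$ is disjoint from $\interior{U}$; so no two values of $t$ can work. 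In other words, every line in the direction $\vv{c}$ meets $\partial U$ in at most one point.

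Next I would conclude by Fubini's theorem. Choose coordinates (or a linear change of variables) so that $\vv{c}$ is the last standard basis vector; a linear change of variables only multiplies Lebesgue measure by a nonzero constant, so it suffices to prove the claim in these coordinates. Writing points of $\RR^d$ as $(\vv{y}, t)$ with $\vv{y} \in \RR^{d-1}$ and $t \in \RR$, the previous paragraph shows that for each fixed $\vv{y}$ the slice $\{ t : (\vv{y}, t) \in \partial U \}$ is a subset of $\RR$ with at most one element, hence has one-dimensional Lebesgue measure zero. Since $\partial U = \closure{U}\setminus\interior{U}$ is the difference of a closed set and an open set, it is Borel, hence Lebesgue measurable, so Fubini's theorem applies and gives
\[ \vol_{\RR^d}(\partial U) = \int_{\RR^{d-1}} \left( \int_{\RR} \mathbf{1}_{\partial U}(\vv{y}, t) \, dt \right) d\vv{y} = \int_{\RR^{d-1}} 0 \, d\vv{y} = 0. \]

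I do not anticipate a serious obstacle here; the one point that needs a little care is the measurability of $\partial U$ so that Fubini is legitimately applicable, and the fact that $\interior{C}$ is closed under positive scaling (so that $t\vv{c} \in \interior C$ for all $t>0$), both of which are routine. The conceptual content is entirely carried by Lemma~\ref{slideIntoInterior: L}; this corollary is just the ``boundary has empty interior along one direction $\Rightarrow$ measure zero'' packaging of it.
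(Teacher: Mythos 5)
Your proof is correct, and it takes a genuinely different route from the paper's, though both rest entirely on Lemma~\ref{slideIntoInterior: L}. You convert that lemma into the statement that every line parallel to a fixed direction $\vv{c} \in \interior{C}$ meets $\partial U$ in at most one point (once a point of $\closure{U}$ is translated by any positive multiple of $\vv{c}$ it lands in $\interior{U}$, hence off $\partial U$), and then you integrate this out via Fubini after a linear change of coordinates; the two points you flag are indeed harmless, since $\partial U$ is closed and hence Borel, and dilation by $t>0$ is a homeomorphism preserving $C$, so $\interior{C}$ is stable under positive scaling, while a linear isomorphism only rescales Lebesgue measure by a nonzero constant. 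The paper argues differently: it exhausts $\partial U$ by bounded measurable pieces $E_n$, uses the same consequence of Lemma~\ref{slideIntoInterior: L} in the form $\bigl( E_n + \cone(\vv{v}) \bigr) \cap E_n = \emptyset$ to produce countably many pairwise disjoint translates $E_n + (1/k)\vv{v}$ all contained in a bounded set, and concludes $\vol_{\RR^d}(E_n)=0$ from countable additivity. Your version is somewhat more conceptual and actually proves something slightly stronger (that $\partial U$ meets each line in a fixed direction at most once, i.e., it is a graph-like set over a hyperplane), at the cost of invoking Fubini and a change of variables; the paper's packing argument is more elementary in its measure-theoretic inputs, using only additivity, boundedness, and translation invariance. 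Either argument is complete and suffices for the corollary.
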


\begin{proof}  As every open set and closed set in $\RR^d$ is measurable, so is the boundary $\boundary{U} = \closure{U} \setminus \interior{U}$ of $U$. Fixing $\vv{v} \in \interior{C}$, Lemma \ref{slideIntoInterior: L} implies that 
\[ \boundary{U} + \cone(\vv{v}) \subseteq \boundary{U} + \interior{C} \subseteq \closure{U} + \interior{C} \subseteq \interior{U},\] and hence, that $\( \boundary{U} + \cone(\vv{v}) \) \cap \boundary{U} = \emptyset$.

For every $n \geq 1$, let $E_n$ denote the elements of $\boundary{U}$ whose coordinate sum is less than $n$, so that the collection of sets $E_n$ as $n$ varies defines an increasing family of bounded measurable subsets whose union is $\partial{U}$.  As $\( E_n + \cone(\vv{v}) \) \cap E_n$ is empty, it follows that \[ \( E_n +(1/k) \cdot \vv{v}  \) \cap \( E_n + (1/m) \cdot\vv{v} \) = \emptyset \]  for all $k \neq m \in \NN$, and as $E_n$ is bounded,  \[ A_n := \bigcup_{k =1}^{\infty} \(E_n + (1/k) \cdot \vv{v} \) \] is also bounded, and hence, has finite measure.  The additivity of the Lebesgue measure $\mu$ on $\RR^d$ and the disjointness of the sets defining $A_n$ then imply that $\sum_{n=1}^{\infty} \mu \( E_n \)  = \mu \( A_n  \) < \infty$, which tells us that $\mu(E_n) = 0$.  Our claim then follows, as $\mu ( \partial{U} )  = \lim_{n \to \infty} \mu(E_n) = 0$.
\end{proof}

\begin{proposition}
\label{RiemannIntegral: P}
In the context of Setup \ref{convexLemmas: SU}, if $H$ is any truncating halfspace for $C$, then $D = U \cap H $ is bounded and measurable, and \[\lim_{k \to \infty} \frac{ {\#} (k D \cap \ZZ^d)}{k^d} = \vol_{\RR^d}( D ).\]
\end{proposition}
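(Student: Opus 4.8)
The plan is to reduce the statement to the classical fact that a bounded set whose boundary is Lebesgue-null has lattice-point counts in its dilates asymptotic to its volume; the only input specific to our situation is that $\boundary{D}$ is null, and this is essentially Corollary \ref{measureZeroBoundary: C}.

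First I would dispose of boundedness and measurability. Since $U = V + \Gamma \subseteq C$ and $H$ is a truncating halfspace for $C$, the set $D = U \cap H$ is contained in $C \cap H$, which is bounded (as observed just after the definition of a truncation). For measurability, observe that $\Gamma$ is closed, being the complement in the closed cone $C$ of the relatively open set cut out by the open halfspace defining the truncation; hence each translate $\vv{v} + \Gamma$ is measurable, $U = \bigcup_{\vv{v}\in V}(\vv{v}+\Gamma)$ is measurable as a countable union, and $D = U \cap H$ is measurable. Next I would record that $\boundary{D} \subseteq \boundary{U} \cup \boundary{H}$, so that $\vol_{\RR^d}(\boundary{D}) = 0$ by Corollary \ref{measureZeroBoundary: C} together with the fact that $\boundary{H}$ is a hyperplane; consequently $\vol_{\RR^d}(\interior{D}) = \vol_{\RR^d}(D) = \vol_{\RR^d}(\closure{D})$.

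The heart of the argument is then the standard Riemann-sum estimate. Writing $Q = [0,1)^d$, so that $\RR^d$ is tiled by the cubes $\vv{n}+Q$ with $\vv{n}\in\ZZ^d$, and setting $D_k = \bigcup\{\tfrac1k(\vv{n}+Q): \vv{n}\in\ZZ^d,\ \tfrac1k\vv{n}\in D\}$, I would note that since $D$ is bounded this is a finite, disjoint union of cubes of volume $k^{-d}$, so $k^{-d}\#(kD\cap\ZZ^d) = \vol_{\RR^d}(D_k)$; it then suffices to show $\vol_{\RR^d}(D_k) \to \vol_{\RR^d}(D)$. For the upper bound, every point of $D_k$ lies within $\sqrt d/k$ of a point of $D$, so $D_k \subseteq D + B(\vv{0},\sqrt d/k)$, and these sets are bounded, decreasing in $k$, with intersection $\closure{D}$, whence $\limsup_k \vol_{\RR^d}(D_k) \le \vol_{\RR^d}(\closure{D}) = \vol_{\RR^d}(D)$. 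For the lower bound, if $\vv{x}\in\interior{D}$ then a small ball about $\vv{x}$ lies in $D$, so for $k$ large the corner of the grid cube containing $\vv{x}$ lands in $D$ and hence $\vv{x}\in D_k$; thus $\liminf_k \mathbf{1}_{D_k} \ge \mathbf{1}_{\interior{D}}$ pointwise, and since all $D_k$ sit in a fixed bounded set, Fatou's lemma gives $\liminf_k \vol_{\RR^d}(D_k) \ge \vol_{\RR^d}(\interior{D}) = \vol_{\RR^d}(D)$. Combining the two bounds completes the proof.

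I do not expect a serious obstacle here: the substantive geometric content — that $U$, and hence $D$, has measure-zero boundary — has already been extracted from the $C$-convexity of $U$ in Lemma \ref{slideIntoInterior: L} and Corollary \ref{measureZeroBoundary: C}. The only points requiring care are the two-sided squeeze on $\vol_{\RR^d}(D_k)$ and the passage between $D$, $\interior{D}$, and $\closure{D}$, both routine once the null-boundary fact is in hand.
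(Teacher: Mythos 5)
Your proposal is correct and follows essentially the same route as the paper: both reduce the statement to the fact that $\boundary{D}$ is Lebesgue-null (via Corollary \ref{measureZeroBoundary: C} and the hyperplane $\boundary{H}$) and then identify $k^{-d}\,\#(kD\cap\ZZ^d)$ with the volume of the cube/Riemann approximation of $D$. The only difference is that you spell out the convergence of these approximations (the two-sided squeeze via continuity from above and Fatou), where the paper simply invokes Riemann integrability of the bounded set with null boundary.
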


\begin{proof}  By definition, $U$ lies in $C$, and is the union of countably many translates of $C$.  Thus, $D$ is measurable and lies in the bounded truncation $C \cap H$.  Moreover, as $D$ is bounded, $\vol_{\RR^d}(D)$ can be computed via Riemann integration if and only if $\partial{D}$ has measure zero.   However, the boundary of ${D}$ is contained in the union of $\boundary{U}$ and $\boundary{H}$;  by Corollary \ref{measureZeroBoundary: C}, the former boundary has measure zero, and it is apparent that $\boundary{H}$ has measure zero in $\RR^d$ as well.  The result then follows from the observation that 
\[ \frac{\# (k D \cap \ZZ^d)}{k^d}  =   \frac{\# (D \cap (1/k) \ZZ^d)}{k^d} \]
is the volume of the Riemann approximation of $D$ given by the Minkowski sum $D \cap (1/k) \ZZ^d + \left [0, 1/k \right]^d$.
\end{proof}

We are now prepared to prove Theorem~\ref{plimit}.

\begin{proof}[Proof of Theorem \ref{plimit}]  Set $C = \cone(S)$.  To simplify notation, we write $\Delta_q$ instead of $\Delta_q(S, T_{\bullet})$ and  $\Delta$ instead of $\Delta(S, T_{\bullet})$.  If $D$ is either $\Delta$ or $\Delta_q$, then $\tilde{D}$ will denote the (bounded) intersection $D \cap H$.

By definition,  $T_q$ is contained in $q \Delta_q$.  Consequently,  
\[ T_q \cap qH \subseteq q \Delta_q \cap qH = q ( \Delta_q \cap H) = q \tilde{\Delta}_q \subseteq q \tilde{\Delta},  \] 
and Proposition \ref{RiemannIntegral: P} then implies that
 \[  \limsup_{q \to \infty} \frac{{\#}(T_q \cap qH)}{q^d} \leq \lim_{q \to \infty} \frac{{\#}( q \tilde{\Delta} \cap \ZZ^d )}{q^d} = \vol_{\RR^d}(\tilde{\Delta}).\] 

It remains to show that  the volume of $\tilde{\Delta}$ is less than or equal to the corresponding limit inferior.  Towards this, let $\mathscr{C}$ be a full-dimensional pointed cone contained in $C$ whose boundary intersects $\partial C$ only at $0$.  By Theorem \ref{ConeTheorem: T}, we may fix an a truncating halfspace $\mathscr{H}$ for $C$ such that $(\mathscr{C} \cap \ZZ^d) \setminus \mathscr{H}$ is contained in $S$.  Like $q$, let $r$ and $s$ be variable powers of $p$, and set \[M_{r,s} = \left(   \frac{1}{r} \cdot T_r + \mathscr{C} \, \setminus \,  \frac{1}{s} \cdot \mathscr{H} \right) \cap H.\] 

We claim that for every $q,r,$ and $s$, 
\begin{equation} 
\label{p-body containment: e}
qrs M_{r,s} \cap \ZZ^d \subseteq T_{qrs} \cap qrs  H.
\end{equation} 

Before justifying \eqref{p-body containment: e}, we explain how it allows us to complete the proof:  First, note that Proposition~\ref{RiemannIntegral: P} and \eqref{p-body containment: e} imply that 
\[ \vol_{\RR^d} \(  M_{r,s} \) = \frac{\vol_{\RR^d}( rs  M_{r,s})}{(rs)^d}  = \lim_{q \to \infty} \frac{ \# (qrs M_{r,s} \cap \ZZ^d)}{(qrs)^d} \]
is less than or equal to 
\[  \liminf_{q \to \infty}  \frac{\#(T_{qrs} \cap qrsH)}{(qrs)^d}  = \liminf_{q \to \infty}  \frac{\#(T_{q} \cap qH)}{q^d}. \] 

Next, observe that for a fixed value of $r$, our choice of $H$ guarantees that $((1/r) T_r) \cap H$ is finite,  which shows that as $s$ varies, the sets $M_{r,s}$ form an increasing chain whose union is
\[ M_{r} = \left(   \frac{1}{r} \cdot T_r + \mathscr{C} \right) \cap H. \]
Consequently,
\[ \vol_{\RR^d}   \( M_r \)  = \lim_{s \to \infty} \vol_{\RR^d}(M_{r,s}) \leq \liminf_{q \to \infty}  \frac{\#(T_{q} \cap qH)}{q^d}.  \] 

Similarly, as $\mathscr{C}$ approaches $C$ in an increasing manner from within, $\vol_{\RR^d}(M_r)$ approves $\vol_{\RR^d}(\tilde{\Delta}_r)$ from below, and so we may replace $\vol_{\RR^d}(M_r)$ with $\vol_{\RR^d}(\tilde{\Delta}_r)$ in the previous inequality.  Letting $r \to \infty$ then shows that $\vol_{\RR^d}( \tilde{\Delta})$ is less than or equal to the same limit inferior.

To establish \eqref{p-body containment: e}, note that since $\mathscr{C}$ is a cone, if $\lambda$ is positive and $X$ is any subset of $\RR^d$, then $\lambda (\mathscr{C} \setminus X) = (\lambda \mathscr{C}) \setminus (\lambda X) = \mathscr{C} \setminus \lambda X$. It follows from this, our choice of $\mathscr{H}$, and the fact that $T_{\bullet}$ is a $p$-system that 
\begin{align*}
qrs M_{r,s}  \cap \ZZ^d 	& = (qs T_r + \mathscr{C} \setminus qr \mathscr{H}) \cap \ZZ^d \cap qrs H  \\ 
					& \subseteq (qs T_r + \mathscr{C} \setminus \mathscr{H}) \cap \ZZ^d \cap qrs H  \\
					& = (qs T_r + (\mathscr{C} \cap \ZZ^d) \setminus \mathscr{H}) \cap qrs H \\
					& \subseteq (T_{qrs} + S) \cap qrs H \\ 
					& = T_{qrs} \cap qrsH,
\end{align*}
which allows us to conclude the proof.
\end{proof}

Below, we see that there is some flexibility when computing the volumes considered above.

\begin{lemma}  
\label{equalVolumes: L}
Let $S$ be a standard semigroup in $\ZZ^d$.  If $T_{\bullet}$ is a $p$-system in $S$, and $H$ is any truncating halfspace for $\cone(S)$,  then the volume of $\Delta(S, T_{\bullet}) \cap H$ equals the volume of its interior, and of its closure.
\end{lemma}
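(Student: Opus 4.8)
The plan is to reduce everything to Corollary~\ref{measureZeroBoundary: C}, which already says that a set of the shape described in Setup~\ref{convexLemmas: SU} has measure-zero boundary. The main observation is simply that $\Delta = \Delta(S, T_{\bullet})$ is such a set: by the last displayed description in Definition~\ref{pBody: D}, we have $\Delta = V + C$, where $C = \cone(S)$ and $V = \bigcup_{q} \frac{1}{q} T_{q}$. Since $S \subseteq \ZZ^d$ is countable, each ideal $T_{q}$ is countable, hence so is each $\frac{1}{q} T_{q}$, and therefore so is their countable (indexed by powers of $p$) union $V$; moreover $V \subseteq C$ because $C$ is a cone containing $S$. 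Taking the truncation in Setup~\ref{convexLemmas: SU} to be empty, so that $\Gamma = C$, we are precisely in the situation of that setup with $U = \Delta$, and Corollary~\ref{measureZeroBoundary: C} gives $\vol_{\RR^d}(\partial \Delta) = 0$.

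Next I would intersect with $H$. Write $\tilde{\Delta} = \Delta \cap H$. A routine point-set argument shows $\partial \tilde{\Delta} \subseteq \partial \Delta \cup \partial H$: a boundary point of $\tilde{\Delta}$ lies in $\overline{\Delta} \cap \overline{H}$ but not in $\Delta^{\circ} \cap H$, so either it lies in $H$ (and then it is in $\overline{\Delta}\setminus\Delta^{\circ} = \partial\Delta$) or it lies in $\overline{H}\setminus H = \partial H$. The first set has measure zero by the previous paragraph, and $\partial H$ is a hyperplane, hence also has measure zero, so $\vol_{\RR^d}(\partial \tilde{\Delta}) = 0$. Since $\tilde{\Delta}^{\circ} \subseteq \tilde{\Delta} \subseteq \overline{\tilde{\Delta}}$ and $\overline{\tilde{\Delta}} \setminus \tilde{\Delta}^{\circ} = \partial \tilde{\Delta}$ has measure zero, all three of $\tilde{\Delta}^{\circ}$, $\tilde{\Delta}$, $\overline{\tilde{\Delta}}$ have the same volume, which is exactly the assertion. (Should ``interior'' and ``closure'' instead be read relative to $\Delta$ before intersecting with $H$, the identical argument applies, since $\Delta^{\circ} \cap H$ and $\overline{\Delta} \cap H$ differ from $\tilde{\Delta}$ only by subsets of $\partial \Delta$, which has measure zero.)

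I do not anticipate any real obstacle here. The only point requiring slight care is matching the explicit description $\Delta = V + \cone(S)$ against the hypotheses of Setup~\ref{convexLemmas: SU}—in particular, using that the ``possibly empty'' truncation clause permits $\Gamma = \cone(S)$ and that $V$ is genuinely countable and contained in the cone—after which the conclusion is elementary measure theory about intersecting a set with an open halfspace.
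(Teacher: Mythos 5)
Your proof is correct and is essentially the paper's own argument: the paper likewise reduces to Corollary~\ref{measureZeroBoundary: C} (via the same boundary containment $\partial(\Delta\cap H)\subseteq\partial\Delta\cup\partial H$ used in the proof of Proposition~\ref{RiemannIntegral: P}) and concludes from $\overline{W}=\partial W\sqcup\interior{W}$. You merely make explicit the check, left implicit in the paper, that $\Delta(S,T_{\bullet})=V+\cone(S)$ with $V$ countable fits Setup~\ref{convexLemmas: SU} with empty truncation.
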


\begin{proof}  Set $W = \Delta(S, T_{\bullet}) \cap H$.  As in the proof of Proposition \ref{RiemannIntegral: P}, Corollary \ref{measureZeroBoundary: C} implies that $\vol_{\RR^d}(\boundary{W})=0$.  The identity $\closure{W}= \boundary{W} \sqcup \interior{W}$ then implies that $\vol_{\RR^d}(\closure{W}) = \vol_{\RR^d}(\interior{W})$.
\end{proof}

\begin{proposition}
\label{equalVolumes: P}
Let $S$ be a standard semigroup in $\ZZ^d$,  $U$  an arbitrary subset of $S$, and $\U$ the constant $p$-system given by $U_q = U+S$ for all $q$.  If $T_{\bullet}$ is a $p$-system of ideals of $S$, and $H$ is any truncating halfspace for $\cone(S)$, then the closure of $\Delta(S, T_{\bullet}) \cap H$ and the closure of $\Delta(S, T_{\bullet} + \U) \cap H$ agree.  In particular, their volumes are equal.
\end{proposition}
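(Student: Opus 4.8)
The plan is to prove the two closures equal by establishing set-theoretic inclusions in each direction, and then to deduce the volume equality from Lemma~\ref{equalVolumes: L}. Write $C = \cone(S)$, recall that $H$ is an open halfspace, and fix an element $u_{\circ} \in U$ (we assume $U \neq \varnothing$, which is the case of interest). For each power $q$ of $p$ the $q$-th term of $T_{\bullet} + \U$ is $T_q + U_q = T_q + U + S$, and since $U \subseteq S$ and $T_q$ is an ideal of $S$ this set is contained in $T_q + S = T_q$. Dividing by $q$ and adding $C$ gives $\Delta_q(S, T_{\bullet} + \U) \subseteq \Delta_q(S, T_{\bullet})$ for every $q$, hence $\Delta(S, T_{\bullet} + \U) \subseteq \Delta(S, T_{\bullet})$; intersecting with $H$ and passing to closures yields $\overline{\Delta(S, T_{\bullet} + \U) \cap H} \subseteq \overline{\Delta(S, T_{\bullet}) \cap H}$.

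For the reverse inclusion, which is where the hypothesis that $T_{\bullet}$ is a $p$-system enters, I would take a point $\vv{x} \in \Delta(S, T_{\bullet}) \cap H$ and write $\vv{x} = \tfrac{1}{q}\vv{t} + \vv{c}$ with $\vv{t} \in T_q$ and $\vv{c} \in C$. Iterating $p T_q \subseteq T_{pq}$ shows $r\vv{t} \in T_{qr}$ for every power $r$ of $p$, so $r\vv{t} + u_{\circ} \in T_{qr} + U + S$, and therefore the point
\[
\vv{x}_r \;:=\; \tfrac{1}{qr}\bigl(r\vv{t} + u_{\circ}\bigr) + \vv{c} \;=\; \vv{x} + \tfrac{1}{qr}\,u_{\circ}
\]
lies in $\Delta_{qr}(S, T_{\bullet} + \U) \subseteq \Delta(S, T_{\bullet} + \U)$. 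Since $\vv{x}_r \to \vv{x}$ as $r \to \infty$ and $\vv{x}$ lies in the open set $H$, we get $\vv{x}_r \in \Delta(S, T_{\bullet} + \U) \cap H$ for all large $r$, whence $\vv{x} \in \overline{\Delta(S, T_{\bullet} + \U) \cap H}$. Taking closures gives $\overline{\Delta(S, T_{\bullet}) \cap H} \subseteq \overline{\Delta(S, T_{\bullet} + \U) \cap H}$, so the two closures coincide.

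For the final assertion, $T_{\bullet} + \U$ is again a $p$-system by Example~\ref{sum of p-systems: E}, so Lemma~\ref{equalVolumes: L} applies to each of $\Delta(S, T_{\bullet}) \cap H$ and $\Delta(S, T_{\bullet} + \U) \cap H$ and shows that each has the volume of its own closure; since the closures agree, so do the volumes. The only substantive step is the reverse inclusion, and its mechanism is transparent: shrinking the fixed lattice point $u_{\circ}$ by larger and larger powers of $p$ makes its contribution negligible, while the $p$-system relation for $T_{\bullet}$ lets us transport $\vv{t} \in T_q$ up to $T_{qr}$ so that the perturbed point $\vv{x}_r$ genuinely appears in a later stage of the $p$-body $\Delta(S, T_{\bullet} + \U)$.
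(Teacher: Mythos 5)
Your proof is correct and follows essentially the same route as the paper: the paper reduces the closure comparison to Example~\ref{sum of p-systems: E} together with the fact that $\overline{\Delta(S,\U)} = \cone(S)$ and leaves the details to the reader, and your perturbation of $\vv{x}$ by $u_{\circ}/(qr)$ is precisely that omitted detail, with the volume statement then deduced from Lemma~\ref{equalVolumes: L} exactly as in the paper. Your standing assumption $U \neq \varnothing$ is likewise implicit in the paper's argument (it is needed for $\overline{\Delta(S,\U)} = \cone(S)$), so nothing is lost.
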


\begin{proof}  By Example \ref{sum of p-systems: E}, we have that 
\[ \Delta(S, T_{\bullet} + \U) \cap H = \( \Delta(S, T_{\bullet}) + \Delta(S, \U) \) \cap H,\] 
while Example \ref{constant p-system: E} shows that $\overline{\Delta(S, \U)}$ equals $C = \cone(S)$.  Using these facts, it is straightforward to verify that the closures of the stated intersections agree (the details are left to the reader).  The second statement follows from the first and Lemma \ref{equalVolumes: L}.
\end{proof}

\begin{corollary}  
\label{generalSubsetVolume: C}
Let $S$ be a standard semigroup in $\ZZ^d$, and fix an arbitrary sequence of subsets $V_{\bullet} = \{ V_q \}_{q=1}^{\infty}$ of $S$ indexed by the powers of $p$.   Let $T_{\bullet}$ be a $p$-system of ideals of $S$, and suppose that $U$ is a subset of $S$ such that
$ T_q + U  \subseteq  V_q \subseteq T_q $
for every $q$.   
If $H$ is any truncating halfspace of $\cone(S)$, then
\[  \lim_{q \to \infty}  \frac{ \# (V_q \cap qH)}{q^d} =  \vol_{\RR^d} (\Delta(S,T_{\bullet}) \cap H). \]
\end{corollary}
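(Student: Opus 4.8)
The plan is to deduce Corollary~\ref{generalSubsetVolume: C} from Theorem~\ref{plimit} by a squeeze argument, using Proposition~\ref{equalVolumes: P} to handle the two-sided bound. First I would record the two inclusions $T_q + U \subseteq V_q \subseteq T_q$ and apply the monotonicity of the counting function: intersecting with the fixed truncated region $qH$ and counting lattice points preserves inclusions, so
\[
\#\bigl((T_q + U) \cap qH\bigr) \;\leq\; \#\bigl(V_q \cap qH\bigr) \;\leq\; \#\bigl(T_q \cap qH\bigr)
\]
for every $q$. Dividing by $q^d$ and taking limits, the upper bound is controlled directly by Theorem~\ref{plimit} applied to $T_{\bullet}$, which gives $\lim_{q\to\infty} q^{-d}\#(T_q \cap qH) = \vol_{\RR^d}(\Delta(S,T_{\bullet}) \cap H)$.

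For the lower bound, the point is that the sequence $\{T_q + U\}_q$ need not itself be a $p$-system, so Theorem~\ref{plimit} does not apply to it verbatim; instead I would compare it to the genuine $p$-system $T_{\bullet} + \U$, where $\U$ is the constant $p$-system $U_q = U + S$ from Example~\ref{constant p-system: E}. Since $T_q$ is an ideal of $S$, we have $T_q + U \subseteq T_q + U + S = T_q + U_q$, and conversely $T_q + U_q = T_q + U + S = T_q + U$ because $T_q + S \subseteq T_q$; thus in fact $T_q + U = (T_{\bullet} + \U)_q$ as sets, so these counting functions coincide exactly. Now Theorem~\ref{plimit} applies to the $p$-system $T_{\bullet} + \U$ (it is a $p$-system by Example~\ref{sum of p-systems: E}), giving
\[
\lim_{q\to\infty} \frac{\#\bigl((T_q + U) \cap qH\bigr)}{q^d} = \vol_{\RR^d}\bigl(\Delta(S, T_{\bullet} + \U) \cap H\bigr),
\]
and Proposition~\ref{equalVolumes: P} identifies the right-hand side with $\vol_{\RR^d}(\Delta(S,T_{\bullet}) \cap H)$. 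Hence both the outer terms of the squeeze converge to the same limit $\vol_{\RR^d}(\Delta(S,T_{\bullet}) \cap H)$, and the middle term $q^{-d}\#(V_q \cap qH)$ is forced to converge to it as well.

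The only subtlety — and the step I would be most careful about — is the bookkeeping identity $T_q + U = T_q + U_q$, which relies on $T_q$ being an $S$-ideal and hence absorbing additions by elements of $S$; once this is observed the argument is a clean application of the two already-established results with no further analysis needed. I do not expect any genuine obstacle here: all the hard work (relating lattice-point counts to volumes, and showing the $\U$-perturbation does not change the volume) has been done in Theorem~\ref{plimit}, Proposition~\ref{equalVolumes: P}, and Lemma~\ref{equalVolumes: L}, so this corollary is essentially a formal consequence. If one prefers to avoid the identity $T_q+U = T_q+U_q$ altogether, an alternative is to note $T_q + U \subseteq T_q + U_q \subseteq T_q$ (the last inclusion again since $T_q$ is an ideal), run the squeeze against $T_{\bullet}+\U$ on one side and $T_{\bullet}$ on the other, and conclude identically; I would likely present it this way to keep the inclusions manifestly one-directional.
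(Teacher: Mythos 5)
Your proposal is correct and follows essentially the same route as the paper: a squeeze between the $p$-systems $T_{\bullet}+\U$ and $T_{\bullet}$, with Theorem~\ref{plimit} supplying both limits and Proposition~\ref{equalVolumes: P} identifying the volumes. The only difference is that you spell out the identity $T_q+U=T_q+U_q$ (which the paper leaves implicit in asserting $T_q+U_q\subseteq V_q$ ``by hypothesis''), and this extra bookkeeping is accurate.
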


\begin{proof}  Let $\U$ be the constant $p$-system of ideals of $S$ determined by $U$.  By hypothesis,  $T_q +U_q \subseteq V_q \subseteq T_q$, and therefore  \[  \#\(\( T_q + U_q \) \cap qH\) \leq \#(V_q \cap qH) \leq \#(T_q \cap qH). \] The result then follows from Theorem \ref{plimit}  and Proposition \ref{equalVolumes: P}.
\end{proof}

\section{$p$-families, $p$-systems, and limits as Euclidean volumes}\label{limits-as-volumes}
\newcommand{\I}{I_{\bullet}}
\newcommand{\J}{J_{\bullet}}

In this sections, all rings will be of prime characteristic $p>0$.  Recall the following definitions, which appeared in the introduction.

\begin{definition}
A sequence of ideals $\I = \{ I_q \}_{q=1}^{\infty}$ whose terms are indexed by the powers of the characteristic is called a \emph{$p$-family} whenever $I_q^{[p]} \subseteq I_{pq}$ for all $q$ a power of $p$. \end{definition}

\begin{definition}
Given a $p$-family $\I$ of $\m$-primary ideals in a $d$-dimensional local ring $(R, \m)$, we call the limit 
\[ \vol_R( \I) = \lim_{q \to \infty} \frac{\ell_R ( R / I_q )}{q^d} \] the \emph{volume} of the $p$-family.  
\end{definition}

\begin{remark}
\label{uniform multiplier: R}
Given a $p$-family of ideals $\I$ in a local ring $(R, \m)$, then the terms in this family are $\m$-primary if and only if $I_1$ is $\m$-primary.  Indeed, if $\m^{a}$ is contained in $I_1$ for some positive integer $a$, and $I_1$ is generated by $b$ elements, then, by the pigeon-hole principle, 
%
%
\[ \m^{ ab q} \subseteq I_1^{b q} \subseteq I_1^{[q]} \subseteq I_{q}. \]
In other words, if $c = ab$, then $\m^{cq} \subseteq I_q$ for all $q$ a power of $p$.  
\end{remark}

Perhaps the most basic example of a $p$-family is the sequence whose $q$-term is the $q$-th Frobenius power of some fixed ideal in a ring of characteristic $p>0$.  Below, we consider others.

\begin{example}
\label{F-sig family: E}
Recall that if $e \geq 0 $ is an integer, then $F^e_{\ast} R$ is the $R$-module obtained by applying the restriction of scalars functor associated to the $e$-th iterated Frobenius endomorphism of $R$ to the ring $R$ itself.  In concrete terms, an $R$-linear map $\phi: F^e_{\ast} R \to R$ is simply a map of sets $\phi: R  \to R$ that is additive, and such that $\phi(x^{p^e} y) = x \phi(y)$ for every $x$ and $y$ in $R$.   With this notation, if $\mathfrak{b}$ is an ideal of $R$, and $I_q$ consists of all $x \in R$ such that $\phi(x) \in \mathfrak{b}$ for every $R$-linear map $\phi: F^e_{\ast} R \to R$, then the corresponding sequence $\I$ is a $p$-family of ideals in $R$.  For a detailed verification of this fact when $\mathfrak{b}$ is the maximal ideal of a local ring, we refer the reader to \cite{Tucker12}.
\end{example}

\begin{example}
Recall that a \emph{graded family of ideals} is a sequence of ideals $\J = \{ J_n \}_{n =1}^{\infty}$ indexed by the natural numbers with the property that $J_a \cdot J_b \subseteq J_{a+b}$ for all natural numbers $a$ and $b$.  Given a graded system of ideals, the subsequence corresponding to terms indexed by powers of $p$ defines a $p$-family.    For example, if $\mathfrak{b}$ is an ideal of $R$, then the sequence whose $n$-th term is $\mathfrak{b}^n$ defines a graded family of ideals, from which we may extract the $p$-family whose $q$-th term is $\mathfrak{b}^q$.
\end{example}

It is apparent that one may employ standard algebraic constructions to generate new $p$-bodies from given ones.  We gather some instances of this below.

\begin{example}
The termwise product, sum, or intersection of an arbitrary collection of $p$-families defines a $p$-family.  Moreover, given a map of rings $A \to B$, the termwise expansion of a $p$-family in $A$ defines a $p$-family in $B$.  Similarly, the termwise contraction of a $p$-family in $B$ defines a $p$-family in $A$.
The termwise saturation of a $p$-family is also a $p$-family.  More precisely, if $\I$ is a $p$-family in $R$, and $\mathfrak{b}$ is any ideal of $R$, then the sequence whose $q$-term is \[ (I_q : \mathfrak{b}^{\infty}) = \cup_{n=1}^{\infty} (I_q: \mathfrak{b}^{n})\] is a $p$-family in $R$.
\end{example}

\begin{example}
\label{parameter family: E}
Fix a positive real number $\lambda$.  If $q$ is a power of $p$,  set $\lambda_q = 
\lceil  q \lambda \rceil -1$, and note that $\lambda_{pq}$ is greater than or equal to $p \lambda_q$. 

Fix an element $f$ of a ring $R$, and a $p$-family $\I$ in $R$.  If  \[ J_q = (I_q : f^{\lambda_q})\] for every $q$ a power of $p$, then
\[ J_q^{[p]} \subseteq (I_q^{[p]} : f^{p \lambda_q}) \subseteq (I_{pq} : f^{\lambda_{pq}}) = J_{pq}, \] which shows that $\J$ is a $p$-family of ideals in $R$.
\end{example}

\begin{setup}
\label{OK: SU}
Fix a $d$-dimensional local domain $(D, \m, \kk)$ of characteristic $p>0$ with fraction field $\FF$, a  $\ZZ$-linear embedding of $\ZZ^d$ into $\RR$ induced by a vector $\vv{a}$ in $\RR^d$, and a valuation $\nu: \FF^{\times} \longonto \ZZ^d$ that is OK relative to $D$.   We use $S$ to denote the semigroup $\nu(D)$ in $\ZZ^d$, and $C$ to denote the closed cone in $\RR^d$ generated by $S$.  We follow the notation established in Definition \ref{OKvaluation: D}.  
\end{setup}

The connection between $p$-families and $p$-systems stems from the simple fact that if $x \in D$, then $p \cdot \nu(x) = \nu(x^p)$.  Consequently, if $\I$ is a $p$-family of ideals in $D$, then $\T(\I)$, the sequence whose $q$-th term is $\T(I_q)$, is a $p$-system of ideals in $S$.  

\begin{remark} Whether the sequence $\Th(\I)$ is a $p$-system when $h > 1$ appears to be a more subtle issue.  Indeed, suppose that $M$ is a $D$-submodule of the fraction field $\FF$, and let $m_1, \dots, m_h$ be elements of $M$ with $\nu(m_1) = \cdots = \nu(m_h) = \vv{u}$, and whose images in 
\[ \frac{ M \cap \FF_{\geq \vv{u}}}{M \cap \FF_{> \vv{u}}} \] 
are linearly independent over $\kk$.  If the images of $m_1^p, \dots, m_h^p$ in 
\[ \frac{ M \cap \FF_{\geq p\vv{u}}}{M \cap \FF_{> p\vv{u}}} \] 
were always linearly independent over $\kk$, then this would tell us that $\Th(\I)$ was a $p$-system.    Though this is condition {is} satisfied whenever $\kk$ is perfect, it is not clear whether $\Th(\I)$ is a $p$-system in general, and it is possible that this issue may depend on the particular  valuation.
\end{remark}

While we do not know whether the sequence $\Th(\I)$ is a $p$-system when $h>1$, the fact that it can be uniformly approximated by $\T(\I)$ allows us to regard it as such in at least one important way.

\begin{corollary}  
\label{limitViaVolume: C}
Adopt the context of Setup \ref{OK: SU}. If $\I$ is a $p$-family of ideals in $D$ and $h$ is an integer with $1 \leq h \leq [\kk_V: \kk]$, then
\[ \lim_{q \to \infty} \frac{ \# ( \Th(I_q)  \cap qH ) }{q^d}  = \vol_{\RR^d}(\Delta(S, \T(\I)) \cap H) , \]
where $H$ is any truncating halfspace for the cone $C$.
\end{corollary}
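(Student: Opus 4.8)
The plan is to obtain this as an immediate consequence of Corollary~\ref{generalSubsetVolume: C}, using the uniform two-sided approximation of $\Th$ by $\T$ provided by Lemma~\ref{approximation: L}. The point of routing through Corollary~\ref{generalSubsetVolume: C} rather than Theorem~\ref{plimit} is exactly the difficulty flagged in the remark preceding the statement: when $h>1$ it is unclear that $\Th(\I)$ is a $p$-system, so Theorem~\ref{plimit} does not apply to it directly. However, $\T(\I)$ \emph{is} a $p$-system, and $\Th(\I)$ is squeezed between a fixed translate of it and itself.

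First I would assemble the structural facts. Since $\nu$ is OK relative to $D$, the semigroup $S = \nu(D)$ satisfies $S - S = \ZZ^d$ and the cone $C = \cone(S)$ is full-dimensional and pointed; that is, $S$ is standard, which is precisely what strong domination delivers (cf.\ the ``Interplay'' discussion and Setup~\ref{OK: SU}). Next, by Remark~\ref{T defines ideals: R} each $\Th(I_q)$ is an ideal of $S$, hence in particular a subset of $S$, and $\T(I_q) = \nu(I_q)$. Finally, as observed just before the statement, the identity $p \cdot \nu(x) = \nu(x^p)$ for $x \in D$ shows that $\T(\I) = \{\T(I_q)\}_{q}$ is a $p$-system of ideals of $S$.

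Now I would invoke Lemma~\ref{approximation: L}: there is an element $\vv{u} \in S$ with $\T(M) + \vv{u} \subseteq \Th(M) \subseteq \T(M)$ for every $D$-submodule $M$ of $\FF$ and every integer $1 \le h \le [\kk_V:\kk]$. Applying this with $M = I_q$ and setting $T_q = \T(I_q)$, $V_q = \Th(I_q)$, and $U = \{\vv{u}\} \subseteq S$, the hypotheses $T_q + U \subseteq V_q \subseteq T_q$ of Corollary~\ref{generalSubsetVolume: C} hold for every power $q$ of $p$. That corollary, applied with the truncating halfspace $H$ of $C$, then yields exactly
\[ \lim_{q \to \infty} \frac{\#\(\Th(I_q) \cap qH\)}{q^d} = \vol_{\RR^d}\(\Delta(S, \T(\I)) \cap H\), \]
which is the assertion.

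I do not expect any genuine obstacle here: the only point requiring care is that one must approximate through the $p$-system $\T(\I)$ rather than argue with $\Th(\I)$ directly, and that the approximating element $\vv{u}$ can be chosen independently of $q$ (and of $h$) — which is the content of Lemma~\ref{approximation: L}. All of the analytic content, namely the passage from lattice-point counts to Euclidean volumes and the invariance of the volume under adding a constant $p$-system, has already been established in Theorem~\ref{plimit} and Proposition~\ref{equalVolumes: P}, which underpin Corollary~\ref{generalSubsetVolume: C}.
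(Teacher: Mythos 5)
Your proposal is correct and matches the paper's own proof, which derives the statement immediately from Lemma~\ref{approximation: L} together with Corollary~\ref{generalSubsetVolume: C}. The details you supply — taking $T_q = \T(I_q)$, $V_q = \Th(I_q)$, and $U = \{\vv{u}\}$ with $\vv{u}$ the uniform approximating element — are exactly the intended instantiation.
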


\begin{proof}  This follows immediately from Lemma \ref{approximation: L} and Corollary \ref{generalSubsetVolume: C}.
\end{proof}

\begin{theorem}
\label{OK limit existence: T}
  Adopt the context of Setup \ref{OK: SU}, and fix $p$-families of ideals $\I$ and $\J$ in $D$ with $I_q \subseteq J_q$ for all $q$.   If there exists a positive integer $c$ such that $\m^{cq} \cap I_q = \m^{cq} \cap J_q$ for all $q$, then 
\[ \lim_{q \to \infty} \frac{ \ell_D \( J_q / I_q \) }{q^d} =  [\kk_V: \kk] \cdot \vol_{\RR^d} \( \Delta(S, \T(\J)) \setminus \Delta(S, \T(\I)) \). \] 

Moreover, the $p$-bodies $\Delta(S, \T(\I))$ and $\Delta(S, \T(\J))$ agree outside of some truncation of the cone $C$, and so the the above limit is finite.
\end{theorem}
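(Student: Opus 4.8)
The plan is to realize $\ell_D(J_q/I_q)$ as a difference of lattice‑point counts via Lemma~\ref{lengths via counting points: L}, and then pass to Euclidean volumes via Corollary~\ref{limitViaVolume: C}. Write $l=[\kk_V:\kk]$ and let $\vv{v}$ be the distinguished vector of Definition~\ref{OKvaluation: D}, noting $\iprod{\vv{v}}{\vv{a}}>0$ since $D$ is strongly dominated by $\nu$. First I would fix $q$, put $\vv{v}_q=cq\,\vv{v}$, and use the OK condition $D\cap\FF_{\geq\vv{v}_q}\subseteq\m^{cq}$: any $x\in J_q$ with $\nu(x)\geq\vv{v}_q$ lies in $J_q\cap\m^{cq}=I_q\cap\m^{cq}\subseteq I_q$, so $N_q:=J_q\cap\FF_{\geq\vv{v}_q}=I_q\cap\FF_{\geq\vv{v}_q}$ sits inside both $I_q$ and $J_q$. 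Hence $\ell_D(J_q/I_q)=\ell_D(J_q/N_q)-\ell_D(I_q/N_q)$, and applying Lemma~\ref{lengths via counting points: L} to $M=J_q$ and to $M=I_q$ with the vector $\vv{v}_q$ gives
\[ \ell_D(J_q/I_q)=\sum_{h=1}^{l}\Bigl(\#\bigl(\Th(J_q)\cap q\mathscr{H}\bigr)-\#\bigl(\Th(I_q)\cap q\mathscr{H}\bigr)\Bigr), \]
where $\mathscr{H}=\{\vv{u}:\iprod{\vv{u}}{\vv{a}}<c\iprod{\vv{v}}{\vv{a}}\}$ is a truncating halfspace for $C$, arranged so that the halfspace the lemma attaches to $\vv{v}_q$ is exactly $q\mathscr{H}$.

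Next I would divide by $q^d$ and apply Corollary~\ref{limitViaVolume: C} to each term: the limits $\lim_q q^{-d}\#(\Th(J_q)\cap q\mathscr{H})$ and $\lim_q q^{-d}\#(\Th(I_q)\cap q\mathscr{H})$ exist and equal $\vol_{\RR^d}(\Delta(S,\T(\J))\cap\mathscr{H})$ and $\vol_{\RR^d}(\Delta(S,\T(\I))\cap\mathscr{H})$. Since the summand is independent of $h$ and $\nu(I_q)\subseteq\nu(J_q)$ forces $\Delta(S,\T(\I))\subseteq\Delta(S,\T(\J))$, this yields
\[ \lim_{q\to\infty}\frac{\ell_D(J_q/I_q)}{q^d}=l\cdot\vol_{\RR^d}\Bigl(\bigl(\Delta(S,\T(\J))\setminus\Delta(S,\T(\I))\bigr)\cap\mathscr{H}\Bigr). \]
The same argument runs with $c$ replaced by any larger integer $c'$ — the only input used is $J_q\cap\m^{c'q}\subseteq I_q$, which still holds — enlarging $\mathscr{H}$ to $\mathscr{H}'=\{\iprod{\cdot}{\vv{a}}<c'\iprod{\vv{v}}{\vv{a}}\}$ while leaving the left side fixed. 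Letting $c'\to\infty$, so that $\mathscr{H}'$ exhausts $\RR^d$, forces $\vol_{\RR^d}((\Delta(S,\T(\J))\setminus\Delta(S,\T(\I)))\cap\mathscr{H}')$ to be independent of $c'$ and hence equal to $\vol_{\RR^d}(\Delta(S,\T(\J))\setminus\Delta(S,\T(\I)))$, which is therefore finite; this gives both the displayed formula and the finiteness of the limit.

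It remains to prove the set‑theoretic assertion that the two $p$-bodies agree outside a truncation of $C$. The computation above already shows $\nu(J_q)$ and $\nu(I_q)$ agree outside $q\mathscr{H}$, so $\bigcup_q\tfrac1q\nu(J_q)$ and $\bigcup_q\tfrac1q\nu(I_q)$ agree outside $\mathscr{H}$ and therefore $\Delta(S,\T(\J))\setminus\Delta(S,\T(\I))=(K+C)\setminus\Delta(S,\T(\I))$, where $K:=\bigl(\bigcup_q\tfrac1q\nu(J_q)\bigr)\setminus\bigl(\bigcup_q\tfrac1q\nu(I_q)\bigr)\subseteq C\cap\mathscr{H}$ is bounded. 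The engine is the observation that, $\m^{cq}$ being $\m$-primary, $\nu(\m^{cq})$ is cofinite in $S$ and contains $S\cap\{\vv{w}:\vv{w}\geq\vv{v}_q\}$; so $\m^{cq}x\subseteq I_q$ for $x\in J_q$ shows each value $\vv{u}\in\nu(J_q)\setminus\nu(I_q)$ satisfies $\vv{u}+\nu(\m^{cq})\subseteq\nu(I_q)$, and — since the Frobenius iterates $p^eq\,\vv{u}\in\nu(J_{p^eq})$ are likewise not in $\nu(I_{p^eq})$ (else $\tfrac1q\vv{u}$ would already lie in $\bigcup_q\tfrac1q\nu(I_q)$) — the scaled value $\vv{k}=\tfrac1q\vv{u}\in K$ satisfies $\vv{k}+\bigl(D'\cap(C\setminus\mathscr{H})\bigr)+C\subseteq\Delta(S,\T(\I))$, where $D'=\bigcup_e\tfrac{1}{p^eq}S$ is dense in $C$. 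As $D'\cap(C\setminus\mathscr{H})$ is dense in $C\setminus\mathscr{H}$, one expects $\bigl(D'\cap(C\setminus\mathscr{H})\bigr)+C$ to fill up $C$ outside a bounded region, and translating by the bounded set $K$ would then confine $\Delta(S,\T(\J))\setminus\Delta(S,\T(\I))$ to a truncation of $C$.

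The hard part will be this last step: showing that $\bigl(D'\cap(C\setminus\mathscr{H})\bigr)+C$ really does exhaust $C$ up to a bounded set, uniformly over the scalings involved, with enough control near the boundary $\partial C$ to produce a genuine truncation rather than mere agreement off a null set — this is where I would expect to need Theorem~\ref{ConeTheorem: T} (to guarantee a ``fat'' presence of $S$ inside suitable full‑dimensional subcones of $C$) together with a careful treatment of the faces of $C$. Everything else is bookkeeping with Lemma~\ref{lengths via counting points: L}, Corollary~\ref{limitViaVolume: C}, and the elementary inclusion $\m^{cq}J_q\subseteq I_q$.
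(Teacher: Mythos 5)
Your proof of the limit formula itself is correct and runs along essentially the same lines as the paper's: your module $N_q=J_q\cap\FF_{\geq cq\vv{v}}=I_q\cap\FF_{\geq cq\vv{v}}$ and the identity $\ell_D(J_q/I_q)=\ell_D(J_q/N_q)-\ell_D(I_q/N_q)$ are exactly the paper's use of the distinguished vector $\vv{v}$ together with the equality \eqref{uniform intersection: e}, and the passage to volumes via Lemma~\ref{lengths via counting points: L} and Corollary~\ref{limitViaVolume: C} is identical. Where you genuinely differ is the final identification of the truncated volume with the volume of the full difference of $p$-bodies: the paper gets this by asserting, from \eqref{uniform intersection: e}, that the two $p$-bodies agree outside the truncation $C\cap H$ with $H=\{\vv{u}:\iprod{\vv{u}}{\vv{a}}<c\iprod{\vv{v}}{\vv{a}}\}$, whereas you rerun the whole computation with $c$ replaced by any $c'\geq c$ (legitimate, since $\m^{c'q}\cap J_q=\m^{c'q}\cap I_q$ persists) and invoke continuity from below of Lebesgue measure as $\mathscr{H}'$ exhausts $\RR^d$. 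That variant is valid and delivers both the displayed equality and the finiteness of $\vol_{\RR^d}(\Delta(S,\nu(J_\bullet))\setminus\Delta(S,\nu(I_\bullet)))$ without any set-theoretic statement about the bodies; it is a clean alternative ending for that part of the theorem.

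The gap is the ``Moreover'' clause. The theorem asserts that $\Delta(S,\nu(I_\bullet))$ and $\Delta(S,\nu(J_\bullet))$ agree, as sets, outside some truncation of $C$, and your argument for this is incomplete by your own account: you reduce it to showing that $\bigl(D'\cap(C\setminus\mathscr{H})\bigr)+C$ exhausts $C$ off a bounded region, with control along the faces of $C$, and you leave that step open. As written it does not close, because density of the rescaled semigroups $\tfrac{1}{p^eq}S$ in $C$ does not by itself give the Minkowski-sum covering you need near $\partial C$ (a dense subset of $C$ plus $C$ need not contain points of $C$ lying along a badly approximated boundary ray), and your $c'\to\infty$ trick, while it proves the volume identity, says nothing about where the set-theoretic difference of the bodies is located. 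The paper does not go through any such covering or density analysis: it takes $\vv{w}=c\vv{v}$, notes that \eqref{uniform intersection: e} means every element of $J_q$ of value at least $q\vv{w}$ already lies in $I_q$, so the value sets $\nu(I_q)$ and $\nu(J_q)$ coincide outside $qH$, and concludes directly that the associated $p$-bodies agree outside $C\cap H$. To make your proposal a complete proof of the stated theorem you must either supply this deduction (i.e., show $\Delta(S,\nu(J_\bullet))\setminus H\subseteq\Delta(S,\nu(I_\bullet))$ for this $H$, or for some larger truncating halfspace) or genuinely carry out the lattice-point covering step you flagged; at present that assertion is unproved in your write-up.
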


Before proving Theorem \ref{OK limit existence: T}, we observe an immediate application.  Let $\J$ be the constant $p$-family whose terms are all $D$.  In this case, $\Delta(S, \nu(\J)) = C$, and if $\I$ is any $p$-family of $\m$-primary ideals in $D$, then Remark \ref{uniform multiplier: R} tells us that $\I$ and $\J$ satisfy the hypotheses of Theorem \ref{OK limit existence: T}.  Thus, we obtain the following corollary.

\begin{corollary} 
\label{OK limit of m-primary ideals exists: E}
Adopt the context of Setup \ref{OK: SU}.  If $\I$ is a $p$-family of $\m$-primary ideals in $D$, and $\Delta = \Delta(S, \T(\I))$, then 
\[ \vol_D(\I) = [ \kk_V : \kk ] \cdot \vol_{\RR^d} ( C \setminus \Delta ) < \infty.\] 
\end{corollary}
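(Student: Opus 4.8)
The plan is to obtain this as an immediate consequence of Theorem \ref{OK limit existence: T}, applied with $\J$ taken to be the constant $p$-family whose $q$-th term is all of $D$. That $\J$ is a $p$-family is trivial, since $D^{[p]} = D \subseteq D$, and clearly $I_q \subseteq D = J_q$ for every $q$. To invoke Theorem \ref{OK limit existence: T} it remains to exhibit a positive integer $c$ with $\m^{cq} \cap I_q = \m^{cq} \cap J_q$ for all $q$; as $J_q = D$, this reduces to the requirement that $\m^{cq} \subseteq I_q$ for every $q$. This is exactly what Remark \ref{uniform multiplier: R} provides: the hypothesis that each $I_q$ is $\m$-primary is equivalent to $I_1$ being $\m$-primary, and then the pigeonhole argument recorded there yields such a $c$.

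Next I would compute the $p$-body associated to $\J$. Since $\T(J_q) = \nu(D) = S$ for every $q$, we have $\Delta_q(S, \T(\J)) = \frac{1}{q} S + \cone(S)$. Because $0 \in S$, this set contains $\cone(S)$; and because $\frac{1}{q} S \subseteq \cone(S)$ (the cone being closed under positive rescaling) together with $\cone(S) + \cone(S) = \cone(S)$, it is contained in $\cone(S)$. Hence $\Delta_q(S, \T(\J)) = C$ for every $q$, and therefore $\Delta(S, \T(\J)) = C$ as well; this is the identity already anticipated in the discussion preceding the statement.

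With these two points settled, Theorem \ref{OK limit existence: T} applies verbatim to the pair $\I \subseteq \J$ and gives
\[ \lim_{q \to \infty} \frac{\ell_D(J_q / I_q)}{q^d} = [\kk_V : \kk] \cdot \vol_{\RR^d}\bigl( \Delta(S, \T(\J)) \setminus \Delta(S, \T(\I)) \bigr) = [\kk_V : \kk] \cdot \vol_{\RR^d}(C \setminus \Delta), \]
and the ``moreover'' clause of that theorem certifies that this quantity is finite. Since $J_q = D$, the quotient $J_q / I_q$ is just $D / I_q$, so the left-hand limit is $\vol_D(\I)$ by definition, and the corollary follows. I do not expect any genuine obstacle here: essentially all the substance is contained in Theorem \ref{OK limit existence: T}, and the only things to verify are the elementary points above, of which the mild identification $\Delta(S, \T(\J)) = C$ is the least automatic.
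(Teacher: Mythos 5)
Your proposal is correct and matches the paper's own argument: the authors also deduce the corollary by applying Theorem \ref{OK limit existence: T} with $\J$ the constant $p$-family whose terms are all $D$, invoking Remark \ref{uniform multiplier: R} for the hypothesis $\m^{cq} \subseteq I_q$ and noting $\Delta(S, \T(\J)) = C$. Your explicit verification that $\tfrac{1}{q}S + \cone(S) = \cone(S)$ simply fills in a detail the paper leaves implicit.
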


\begin{remark} Let $R$ be an $\NN$-graded ring whose zeroth component is a field, and $\I$ and $\J$ be $p$-families of homogeneous ideals with $I_q \subseteq J_q$ for all $q$. In this setting, the condition that there exists a positive integer $c$ such that $\m^{cq} \cap I_q = \m^{cq} \cap J_q$ for all $q$ is equivalent to the condition that there exists some $b$ such that $[I_q]_{\geq bq}=[J_q]_{\geq bq}$ for all $q$. Indeed, if $R$ is generated in degrees at most $d$, then $[R]_{\geq cdq} \subseteq \idealm^{cq} \subseteq [R]_{\geq cq}$. Thus, if $[I_q]_{\geq cq}=[J_q]_{\geq cq}$ for all $q$, then 
\[ \idealm^{cq} \cap  I_q= \idealm^{cq} \cap  [I_q]_{\geq cq}= \idealm^{cq} \cap [J_q]_{\geq cq}  = \idealm^{cq} \cap J_q .\]
Likewise, if $\m^{cq} \cap I_q = \m^{cq} \cap J_q$ for all $q$, then
\[ [I_q]_{\geq cdq} = \m^{cq} \cap I_q \cap [R]_{\geq cdq} = \m^{cq} \cap J_q \cap [R]_{\geq cdq} = [I_q]_{\geq cdq}.\]
\end{remark}

A more in-depth discussion of the condition on the $p$-families $\I$ and $\J$ appearing in the statement of Theorem \ref{OK limit existence: T} in an important special case can be found in Section \ref{applications}.

\begin{proof}[Proof of Theorem \ref{OK limit existence: T}]   If $\vv{u} \in \ZZ^d$, then the exact sequences 
\[ 0  \to \frac{ I_q }{ I_q \cap \FF_{\geq \vv{u} } } \to \frac{ J_q }{ I_q \cap \FF_{\geq \vv{u} } } \to \frac{ J_q }{ I_q  } \to 0 \] 
and 
\[ 0  \to \frac{ J_q \cap \FF_{\geq \vv{u} } }{ I_q \cap \FF_{\geq \vv{u} } } \to \frac{ J_q }{ I_q \cap \FF_{\geq \vv{u} } } \to \frac{ J_q }{ J_q \cap \FF_{\geq \vv{u} } } \to 0 \vspace{2mm} \] 
show that the length of $J_q / I_q$ equals 
\begin{equation} 
\label{length relations: e}
\ell_R \(  \frac{ J_q \cap \FF_{\geq \vv{u} } }{ I_q \cap \FF_{\geq \vv{u} } }  \) + \ell_R \(  \frac{ J_q }{ J_q \cap \FF_{\geq \vv{u} } }  \) - \ell_R \( \frac{ I_q }{ I_q \cap \FF_{\geq \vv{u} } }   \).
\end{equation}

Let $\vv{v} \in S$ satisfy the last condition in Definition \ref{OKvaluation: D} relative to  $\nu$.  If $\vv{w} = c \vv{v}$, then $D \cap \FF_{\geq q \vv{w} } = D \cap \FF_{\geq cq \vv{v} }  \subseteq  \m^{cq}$, and therefore 
\begin{equation}  
\label{uniform intersection: e}
I_q \cap \FF_{\geq q \vv{w} }= J_q \cap \FF_{\geq q \vv{w}}  \text{ for all $q$}. 
\end{equation}

If $H$ is the halfspace of $\RR^d$ consisting of all $\vv{u}$ with $\iprod{\vv{u}}{\vv{a}} < \iprod{\vv{u}}{\vv{w}}$,  then \eqref{uniform intersection: e} implies that the $p$-bodies associated to $\T(\I)$ and $\T(\J)$ agree outside of the truncated cone $C \cap H$.  Furthermore, \eqref{uniform intersection: e}, \eqref{length relations: e}, and Lemma \ref{lengths via counting points: L} imply that 
\begin{align*}
\ell_D(J_q/I_q) & = \ell_D \( {J_q} / {J_q \cap \FF_{\nu \geq q \vv{w}}} \)  -  \ell_D \( {I_q} / {I_q \cap \FF_{\nu \geq q \vv{w}}} \)  \\ 
& = \sum_{h=1}^{[\kk_V: \kk]}  \# \( \Th(J_q) \cap qH \) - \sum_{h=1}^{[\kk_V: \kk]}  \# \( \Th(I_q) \cap qH \). 
\end{align*} 

It then follows from Corollary  \ref{limitViaVolume: C} that $\lim \limits_{q \to \infty} \ell_D( J_q/I_q) / q^d$ equals
\begin{align*}
&  [\kk_V: \kk] \cdot (\vol_{\RR^d}(\Delta(S, \T(\J)) \cap H) - \vol_{\RR^d}(\Delta(S, \T(\I)) \cap H))  \\ 
= \ &  [\kk_V: \kk] \cdot  \vol_{\RR^d}((\Delta(S, \T(\J)) \setminus \Delta(S, \T(\I))) \cap H), 
\end{align*}
which by an earlier observation equals $[\kk_V: \kk]$ times the volume of the difference of $p$-bodies $\Delta(S, \T(\J)) \setminus \Delta(S, \T(\I))$.
\end{proof}

We conclude this subsection with an extension of Theorem \ref{OK limit existence: T}.   We also note that a refinement of this result in the special case that $\J$ is the constant $p$-family whose terms are all given by the ambient ring appears in Subsection~\ref{volume formulas: SS}.

\begin{theorem}
\label{limits-of-pairs-reduced}
Let $(R,\m)$ be a reduced local ring of characteristic $p>0$ such that the quotient of $R$ by each minimal prime is an OK domain (e.g., $R$ is a reduced complete local ring of characteristic $p>0$).  Fix  $p$-families of ideals $\I$ and $\J$ in $R$ with $I_q \subseteq J_q$ for all $q$.   If there exists a positive integer $c$ such that $\m^{cq} \cap I_q = \m^{cq} \cap J_q$ for all $q$, then 
\[ \lim_{q \to \infty} \frac{ \ell_D \( J_q / I_q \) }{q^d} \]
exists.
\end{theorem}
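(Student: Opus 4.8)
The plan is to reduce Theorem~\ref{limits-of-pairs-reduced} to the domain case already handled by Theorem~\ref{OK limit existence: T} by passing to the minimal primes of $R$. Write $\m_1, \dots, \m_s$ for the minimal primes of the reduced ring $R$, and set $R_i = R/\m_i$, which by hypothesis is an OK domain. For each $i$, let $\I^{(i)}$ and $\J^{(i)}$ be the termwise expansions of $\I$ and $\J$ to $R_i$; these are $p$-families in $R_i$ with $I_q R_i \subseteq J_q R_i$. First I would check that the hypothesis $\m^{cq} \cap I_q = \m^{cq} \cap J_q$ descends: since the image of $\m$ in $R_i$ is the maximal ideal $\m_i/\m_i \cdot R$ (abusing notation, call it $\m R_i$), and since taking images along the surjection $R \to R_i$ is right-exact, one gets $(\m R_i)^{cq} \cap I_q R_i = (\m R_i)^{cq} \cap J_q R_i$ after possibly enlarging $c$ — here one uses that $J_q$ is $\m$-primary relative to $I_q$ only up to the truncation, and more carefully that the containment $\m^{cq}\cap I_q=\m^{cq}\cap J_q$ in $R$ forces the corresponding equality of images, so that Theorem~\ref{OK limit existence: T} applies to each pair $(\I^{(i)}, \J^{(i)})$ in $R_i$ and yields that $\lim_{q\to\infty} \ell_{R_i}(J_q R_i / I_q R_i)/q^d$ exists.

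The main step is then to compare $\ell_R(J_q/I_q)$ with $\sum_i \ell_{R_i}(J_q R_i / I_q R_i)$ up to an error that is $o(q^d)$. The natural tool is the associativity/additivity of length along a filtration of $R$ by prime cyclic modules $R/\idealp$. Concretely, choose a prime filtration $0 = N_0 \subset N_1 \subset \cdots \subset N_t = R$ with $N_j/N_{j-1} \cong R/\idealp_j$; tensoring with $R/I_q$ (and with $R/J_q$) and using additivity of length, one reduces $\ell_R(J_q/I_q)$ to a sum of terms $\ell_R\big((R/\idealp_j)\otimes \cdot\big)$ controlled by $\ell_{R/\idealp_j}$ of the expansions of $\I,\J$. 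The primes $\idealp_j$ that are minimal contribute the main terms; for the non-minimal primes $\idealp_j$ one has $\dim R/\idealp_j < d$, and the corresponding contribution is bounded, via Remark~\ref{uniform multiplier: R} (applied to the $p$-family $\I$ pushed into $R/\idealp_j$, which is still $\m$-primary-up-to-$I_1$ there), by a constant times $\ell_{R/\idealp_j}((R/\idealp_j)/I_q(R/\idealp_j))$; since the Hilbert–Kunz-type growth in a $(<d)$-dimensional ring is $O(q^{d-1})$ — which itself follows from Theorem~\ref{OK limit existence: T} applied in each lower-dimensional OK domain quotient, giving an actual limit after dividing by $q^{\dim}$, hence $o(q^d)$ after dividing by $q^d$ — these lower-dimensional contributions vanish in the limit. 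Therefore
\[
\lim_{q\to\infty} \frac{\ell_R(J_q/I_q)}{q^d} = \sum_{i=1}^{s} \lim_{q\to\infty} \frac{\ell_{R_i}(J_q R_i / I_q R_i)}{q^d},
\]
and each summand on the right exists by the domain case, so the left side exists.

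I expect the main obstacle to be the bookkeeping in the filtration argument: one must ensure that the error terms coming from embedded and non-minimal primes are genuinely $o(q^d)$ \emph{uniformly}, which requires knowing that for a $p$-family in a $\delta$-dimensional Noetherian local ring the colengths grow like $O(q^\delta)$. Rather than invoking an external bound, the cleanest route is to run an induction on $d = \dim R$: the base case $d=0$ is trivial, and at each stage the lower-dimensional cyclic quotients $R/\idealp_j$ are again reduced-up-to-nilpotents with all-OK-domain components (after a further reduction to the complete case and modding out nilpotents, none of which changes lengths or the relevant $p$-family structure), so the inductive hypothesis supplies the needed $O(q^{d-1})$ bound. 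A secondary subtlety is verifying that the condition $\m^{cq}\cap I_q = \m^{cq}\cap J_q$ is inherited by the cyclic subquotients $R/\idealp_j$ with a uniform constant $c'$ independent of $q$; this should follow because intersecting with a fixed power of $\m$ commutes, up to a bounded shift, with the finitely many steps of the prime filtration.
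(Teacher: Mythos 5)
Your proposal has two genuine gaps, both of which the paper's actual argument (Proposition~\ref{pairs-to-reduced}) is structured precisely to avoid. First, the descent of the hypothesis to the quotients $R_i=R/\idealp_i$ is asserted but not proved, and it is not clear it holds: images under the surjection $R\to R_i$ do not commute with intersections, so from $\m^{cq}\cap I_q=\m^{cq}\cap J_q$ you cannot conclude $\m_{R_i}^{c'q}\cap I_qR_i=\m_{R_i}^{c'q}\cap J_qR_i$. Concretely, an element $j\in J_q$ whose image lies in $\m_{R_i}^{c'q}$ only satisfies $j\in\m^{c'q}+\idealp_i$, and the hypothesis says nothing about such elements. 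The paper never expands $\I$ and $\J$ to $R_i$ at all; instead it uses Artin--Rees for $\omega_n=(\m^n+\idealp_1)\cap\cdots\cap(\m^n+\idealp_m)$ to reduce to the case $I_q=\omega_{cq}\cap J_q$, and then interpolates by the exact chain $L^{(i)}_q=J_q\cap(\m^{cq}+\idealp_1)\cap\cdots\cap(\m^{cq}+\idealp_i)$. Each successive quotient is literally isomorphic to $L^{(i-1)}_qR_i/\bigl(L^{(i-1)}_qR_i\cap\m_{R_i}^{cq}\bigr)$, a pair of $p$-families in the OK domain $R_i$ for which the intersection condition holds \emph{tautologically} (the smaller ideal is by definition the intersection of the larger one with $\m_{R_i}^{cq}$), so Theorem~\ref{OK limit existence: T} applies with no error terms and no descent issue.

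Second, your comparison step is not available in this generality. The families $\I$, $\J$ are not assumed $\m$-primary --- this theorem exists precisely to treat non-$\m$-primary families such as those arising in generalized Hilbert--Kunz multiplicity --- so the colengths $\ell_{R/\idealp_j}\bigl((R/\idealp_j)/I_q(R/\idealp_j)\bigr)$ you use to bound the contributions of non-minimal primes are typically infinite; Remark~\ref{uniform multiplier: R} and Lemma~\ref{basicBound: L} both require $\m^{cq}\subseteq I_q$ and cannot be invoked. The only finite quantity here is the relative length $\ell_R(J_q/I_q)$, which is finite because $\m^{cq}J_q\subseteq\m^{cq}\cap J_q\subseteq I_q$. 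Relatedly, tensoring a prime filtration of $R$ with $R/I_q$ is not exact, so length is not additive along it and the proposed bookkeeping does not go through as stated; the paper's telescoping sum, by contrast, is an exact filtration of $J_q/(\omega_{cq}\cap J_q)$ by construction. If you want to salvage a comparison of the form $\ell_R(J_q/I_q)\approx\sum_i\ell_{R_i}(\cdot)$, note that the paper only proves such an additivity statement (Lemma~\ref{domain}, Proposition~\ref{volume-as-sum: P}) under the $\m$-primary hypothesis, where the error terms really are $O(q^{d-1})$.
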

\begin{proof} This follows immediately from Theorem~\ref{OK limit existence: T} and Proposition~\ref{pairs-to-reduced}, which appears below.
\end{proof}

\subsection{A characterization of when limits exist}

In this subsection, we derive a characterization of when volumes of $p$-families exist in general by reducing to the case of an OK domain.  For what follows, recall that the dimension an $R$-module $M$ is the Krull dimension of $R$ modulo the annihilator in $R$ of $M$.

\begin{theorem} 
\label{m-primary limits exist: T}
 Let $(R, \m, \kk)$ be a local ring of characteristic $p>0$ and dimension $d$.  If $R$-module dimension of the nilradical of $\hat{R}$ is less than $d$, then for any $p$-family of $\m$-primary ideals $\I$ of $R$, the limit \[ \vol_R(\I) = \lim_{q \to \infty} \frac{ \ell_R( R/ I_q) }{q^{d}} \] exists and is finite.  

Conversely, if this limit exists for all $p$-families of $\m$-primary ideals, then the $R$-module dimension of the nilradical of $\hat{R}$ is less than $d$.
\end{theorem}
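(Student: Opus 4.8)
\textbf{Proof proposal for Theorem \ref{m-primary limits exist: T}.}

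The plan is to reduce to the case of an OK domain, where the limit is realized as a Euclidean volume by Corollary \ref{OK limit of m-primary ideals exists: E}. First, replacing $R$ by its completion $\hat R$ changes neither the lengths $\ell_R(R/I_q)$ (since $\hat R$ is faithfully flat over $R$ and the $I_q$ are $\m$-primary) nor the dimension of the nilradical, so we may assume $R$ is complete. Next, let $N$ be the nilradical of $R$, let $\bar R = R/N$, and let $\bar I_q$ be the image of $I_q$. From the exact sequences relating $R/I_q$, $\bar R/\bar I_q$, and $N/(N \cap I_q)$, one gets $\ell_R(R/I_q) = \ell_{\bar R}(\bar R/\bar I_q) + \ell_R(N/(N\cap I_q))$. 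The hypothesis that $\dim_R N < d$, together with the (standard) uniform bound $\m^{cq} \subseteq I_q$ from Remark \ref{uniform multiplier: R}, forces $\ell_R(N/(N \cap I_q)) = o(q^d)$: indeed $N$ is a finitely generated module killed by a power of $\m$ relative to a ring of dimension $<d$, so its Hilbert-Kunz-type growth along the $I_q$ is $O(q^{d-1})$. Hence the limit for $R$ exists iff it does for the reduced ring $\bar R$, and we may assume $R$ is reduced and complete.

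Now, for a reduced complete local ring, pass to minimal primes $\idealp_1, \dots, \idealp_n$ of $R$, each of which has dimension $\leq d$, and only those with $\dim R/\idealp_i = d$ contribute to the $q^d$-order term. Using the short exact sequence $0 \to R \to \bigoplus_i R/\idealp_i \to Q \to 0$ where $Q$ has dimension $<d$ (this is where reducedness is used — the map $R \to \prod R/\idealp_i$ is injective with cokernel supported on the singular/intersection locus), a length count as above shows $\ell_R(R/I_q) = \sum_i \ell_{R/\idealp_i}\big((R/\idealp_i)/(I_q(R/\idealp_i))\big) + o(q^d)$. Each $R/\idealp_i$ is a complete local domain, hence OK by Corollary \ref{complete local domain is OK: C}, and the images of $\I$ are $p$-families of $\m$-primary ideals there (with the colength along the $d$-dimensional components finite), so Corollary \ref{OK limit of m-primary ideals exists: E} gives existence and finiteness of each summand's limit. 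Summing yields existence of $\vol_R(\I)$. (Many of these reductions mirror Cutkosky's arguments and presumably are packaged in the forthcoming Proposition~\ref{pairs-to-reduced}.)

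For the converse, suppose $\dim_R N = d$ where $N$ is the nilradical of $\hat R$; we must produce a $p$-family of $\m$-primary ideals for which the limit fails. Completing, we may work in $\hat R$. Since $\dim_R N = d$, there is a minimal prime $\idealp$ of $\hat R$ with $\dim \hat R/\idealp = d$ and such that $\hat R_\idealp$ is non-reduced, i.e., the local ring at $\idealp$ has nilpotents; concretely there is an element $y \in N$ and a full system of parameters such that, after a Cohen-structure-theoretic normalization, $\hat R$ looks enough like $\FF_p\llbracket x_1,\dots,x_d\rrbracket[y]/(y^2,\dots)$ along that component to run the construction in Example \ref{volume DNE: E}. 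The key mechanism there: choose integers $0 \le k_q < q$ freely for each $q$, and set $I_q$ to be generated by $\idealm^{[q]}$-type parameter powers together with either nothing extra or the single element $x_1^{k_q} y$; because $y^2 = 0$ (or $y^p$ is killed in the relevant quotient), the Frobenius $I_q^{[p]}$ is insensitive to the choice of $k_q$, so $\I$ is genuinely a $p$-family, while $\ell_R(R/I_q)$ oscillates by an amount of order $q^d$ as the $k_q/q$ are made to oscillate in $(0,1)$. Thus the limit does not exist. The honest work is checking that a non-reduced $d$-dimensional component really does supply such a free binary choice with Frobenius-insensitivity in general; I expect \textbf{this converse construction — propagating Example \ref{volume DNE: E} to an arbitrary ring whose completion has a $d$-dimensional nilpotent component — to be the main obstacle}, as the forward direction is a fairly mechanical dévissage onto Corollary \ref{OK limit of m-primary ideals exists: E}.
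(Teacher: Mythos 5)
Your forward direction is sound and is essentially the paper's dévissage: complete, kill the nilradical using $\m^{cq}\subseteq I_q$ (Remark~\ref{uniform multiplier: R} together with Corollary~\ref{reduced}), compare the reduced complete ring with the product of its domain quotients up to an error of order $q^{d-1}$ (your exact-sequence count is Lemma~\ref{domain}; the paper's own proof instead routes this step through Proposition~\ref{pairs-to-reduced}, but either works), and then apply Corollary~\ref{OK limit of m-primary ideals exists: E} to each complete local domain.

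The genuine gap is in the converse, and you have flagged it yourself: you never actually produce the oscillating $p$-family when the nilradical has dimension $d$, and the Cohen-structure normalization you gesture at is not needed and is not how the argument goes. The paper's Proposition~\ref{limit DNE: P} (adapted from Dao and Smirnov) is purely abstract. After noting that a bad $\m_{\hat R}$-primary family in $\hat R$ contracts to a bad $\m$-primary family in $R$ with the same colengths, one takes $N$ the nilradical of the complete ring, $\mathfrak{a}$ its annihilator, and a minimal prime $\idealp$ of $\mathfrak{a}$ with $\dim(R/\idealp)=d$; then $\idealp$ is a minimal, hence associated, prime of $R$, so $\idealp=(0:x)$ for some nonzero $x$, and a localization argument shows $x\in\idealp$, i.e.\ $x^2=0$. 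One then sets $I_q=\m^q+x\m^{b_q}$, where $b_q$ and $q-b_q$ tend to infinity but $(b_q/q)^d$ does not converge. The ``Frobenius insensitivity'' you were worried about is automatic, with no binary choice or explicit presentation required: $I_q^{[p]}\subseteq\m^{pq}+x^p\m^{pb_q}=\m^{pq}\subseteq I_{pq}$ since $x^p=0$. The order-$q^d$ oscillation comes from the isomorphism $xR\cong R/\idealp$ with $R/\idealp$ a $d$-dimensional domain: by Artin--Rees, $xR\cap I_q=x\m^{b_q}$ for $q\gg 0$, so $\ell_R(R/I_q)=\ell_{R/xR}\big((R/xR)/\m^q(R/xR)\big)+H(b_q)$, where $H$ is the Hilbert function of $R/\idealp$; the first term divided by $q^d$ converges, while $H(b_q)/q^d$ oscillates like a constant times $(b_q/q)^d$. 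Note also that adjoining a single element such as $x_1^{k_q}y$ to a parameter-power ideal, as in your sketch of Example~\ref{volume DNE: E}, does not obviously shift the colength by a definite amount of order $q^d$ once $d\geq 2$; using the whole ideal $x\m^{b_q}$ is what makes the top-order oscillation transparent.
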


In Example \ref{volume DNE: E} from the Introduction, we saw that the condition on the nilradical of $\hat{R}$ in Theorem \ref{m-primary limits exist: T} is necessary, and  more generally, the proof of \cite[Theorem~5.4]{Cutkosky14}, due to Dao and Smirnov,
may be slightly modified to show when the dimension of the nilradical is maximal, there exists a $p$-family whose volume does not exist. We reproduce the adapted proof below to keep the exposition self-contained (and to avoid any confusion related to a few small typos in \emph{loc.\,cit.}).

\begin{proposition}
\label{limit DNE: P}
Let $(R, \m, \kk)$ be a local ring of characteristic $p>0$ and dimension $d$.  If the $R$-module dimension of the nilradical of $\hat{R}$ equals $d$, then there exists a $p$-family of ideals $\I$ in $R$ for which \[ \vol_R(\I) = \lim_{q \to \infty} \frac{\ell_R(R/I_q)}{q^d}\] does not exist. 
\end{proposition}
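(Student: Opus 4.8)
The plan is to reduce to the completion, and then to exploit the structure of the nilradical $N$ of $\hat R$ by a localization argument that produces a quotient domain of dimension $d$ together with enough room to oscillate colengths. First I would pass to $\hat R$: since $\ell_{\hat R}(\hat R / I_q\hat R) = \ell_R(R/I_q)$ for $\m$-primary $I_q$, and since the termwise expansion of a $p$-family is a $p$-family, it suffices to produce the desired $p$-family in $\hat R$. So I may assume $R$ is complete with nilradical $N$ of dimension $d$; let $\idealp$ be a minimal prime of $R$ with $\dim R/\idealp = d$ lying under a minimal prime whose nilpotency contributes to $N$ having full dimension — more precisely, choose $\idealp$ minimal with $\dim(R/\idealp)=d$ such that $R_\idealp$ is not reduced (such a $\idealp$ exists exactly because $N$ has dimension $d$). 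Then $N R_\idealp \neq 0$, so there is a nonzero nilpotent element whose annihilator behaves controllably after localizing; the upshot I want is: there exists an element $f \in R$, not in $\idealp$, and a nonzero element $z$ (a suitable nilpotent) such that multiplication by $z$ on the relevant quotients has a kernel/cokernel discrepancy that scales like $q^d$.

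The core construction is a Dao–Smirnov-style oscillating family. Using the domain $\bar R = R/\idealp$ of dimension $d$, pick an $\m$-primary $p$-family in $\bar R$ — for instance the Frobenius powers $\bar\m^{[q]}$, or better, two nested $p$-families whose colength difference grows like $q^d$ — and lift these to $R$ in two different ways along the two "halves" of the ring separated by the nilpotent direction. Concretely: for each $q=p^e$ I would define $I_q$ to be one of two candidate ideals, $I_q = \m^{[q]} + (\text{stuff})$ versus $I_q = \m^{[q]} + z\cdot(\text{stuff}_q)$, where the choice depends on the parity of $e$ (or on membership of $e$ in some sparse-then-dense sequence of integers), chosen so that $I_q^{[p]} \subseteq I_{pq}$ holds regardless of the choices (this is the same robustness phenomenon as in Example~\ref{volume DNE: E}: the $p$-th Frobenius power washes out the discrepancy because $z^p = 0$ or $z^p \in \m^{[pq]}$). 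The point is that $\ell_R(R/I_q)$ then takes two values differing by a term of order $q^d$ — one accounting for the nilpotent contribution and one not — so $q^{-d}\ell_R(R/I_q)$ oscillates and the limit fails to exist. I would verify $\m$-primariness (automatic once $I_1$ is, by Remark~\ref{uniform multiplier: R}, or directly) and the $p$-family inclusion by a direct computation with Frobenius powers, using $z^{p^{N}} = 0$ for $N$ large and $q$ large.

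The main obstacle I anticipate is getting the length estimate $\ell_R(R/I_q) = c_1 q^d + o(q^d)$ for one choice and $c_2 q^d + o(q^d)$ with $c_1 \neq c_2$ for the other, \emph{uniformly} and with genuinely distinct leading coefficients. This requires understanding how the nilpotent piece $z R$ contributes to colength: one needs that $\ell_R\!\big((zR + I_q)/I_q\big)$ — equivalently $\ell_R\!\big(zR/(zR \cap I_q)\big)$ — grows like a \emph{nonzero} constant times $q^d$, which is where the hypothesis $\dim_R N = d$ is used in an essential way (it guarantees $zR$ has a component of dimension $d$, so its Hilbert–Kunz-type growth is $\Theta(q^d)$). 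I would isolate this as: choose $z$ with $\dim_R(R/\operatorname{ann}(z)) = d$, so that $\ell_R(zR/z\m^{[q]}) \asymp q^d$ by Monsky's theorem applied to the $d$-dimensional module $zR$, and then arrange the two branches of the family to include versus exclude a fixed fraction of this contribution. The rest — checking $I_q^{[p]} \subseteq I_{pq}$, checking the error terms are $o(q^d)$, and concluding oscillation — should be routine manipulation of Frobenius powers and standard multiplicity estimates.
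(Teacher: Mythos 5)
Your overall strategy (oscillate the contribution of a nilpotent element to the colength, keeping the $p$-family condition intact because Frobenius kills that element) is exactly the paper's, but there is a genuine gap at the one step where the hypothesis $\dim_R N = d$ must be converted into a usable element. Your construction needs a single element $z$ satisfying \emph{simultaneously} (a) $z^p=0$ (so that $I_q^{[p]}\subseteq\m^{[pq]}\subseteq I_{pq}$ holds no matter which branch $I_{pq}$ is on --- without this, the ``jump'' from a branch containing $z\cdot(\text{stuff}_q)$ to a branch without it violates the $p$-family condition, since $z^p(\text{stuff}_q)^{[p]}$ need not lie in $\m^{[pq]}$, and the hedge ``$z^p\in\m^{[pq]}$ for all $q$'' forces $z^p=0$ anyway), and (b) $\dim\bigl(R/\operatorname{ann}(z)\bigr)=d$ (so the oscillation has size $\Theta(q^d)$). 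You assert (b) and separately invoke (a), but never show both can be achieved at once, and the two requirements genuinely pull apart: in $R=\kk[[a,b]][y]/(y^3,ay^2)$ with $p=2$, the nilradical is $(y)$ and has dimension $2=d$, the generator $y$ satisfies (b) but $y^2\neq 0$, while the top power $y^2$ satisfies (a) but $\dim R/\operatorname{ann}(y^2)=1$; the element that works is $ay$, and finding it is not a matter of taking generators or powers of generators of $N$. The paper supplies precisely this missing lemma: a minimal prime $\idealp$ of $\operatorname{ann}(N)$ with $\dim(R/\idealp)=d$ is a minimal, hence associated, prime of $R$, so $\idealp=(0:x)$ for some $x\neq 0$; a short localization argument shows $x\in\idealp$, whence $x^2=0$, and $xR\cong R/\idealp$ is $d$-dimensional. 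This is where the hypothesis is actually spent, and your proposal leaves it unproved.

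Two smaller points. First, your reduction to the completion runs the wrong way as written: one produces the bad family $\I$ of $\m_{\hat R}$-primary ideals in $\hat R$ and \emph{contracts} it to $R$ (the contraction of a $p$-family is a $p$-family, and $\ell_{\hat R}(\hat R/I_q)=\ell_R(R/(I_q\cap R))$ since $R/\m^t\cong\hat R/\m_{\hat R}^t$); expansion from $R$ does not by itself give the reduction. Second, once the element $x$ with $x^2=0$ and $\dim xR=d$ is in hand, the paper's family $I_q=\m^q+x\m^{b_q}$, with $b_q\to\infty$, $q-b_q\to\infty$ and $(b_q/q)^d$ non-convergent, is cleaner than your two-branch scheme: the exact sequence for $R/xR$ isolates a convergent part, and Artin--Rees gives $xR\cap I_q=x\m^{b_q}$ exactly, so the remaining term is $H(b_q)/q^d$ for the Hilbert function $H$ of the $d$-dimensional domain $R/\idealp$, which visibly oscillates. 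Your sketch of the analogous estimates (Artin--Rees plus Monsky-type positivity for the $d$-dimensional module $zR$) would also work, but only after the existence of the correct $z$ is established.
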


\begin{proof}  
Note that if a $p$-family satisfying the conclusion of the proposition exists in the completion $(\hat{R}, \idealm_{\hat{R}})$, then one exists in $R$ itself. Namely, if $\I$ is a $p$-family of $\idealm_{\hat{R}}$-primary ideals in $\hat{R}$, then the intersection of $\I$ with $R$ is a $p$-family of $\idealm$-primary ideals in ${R}$, and \[ \ell_{\hat{R}}(\hat{R}/I_q)=\ell_{{R}}({R}/(I_q \cap R)),\] which is immediate from the isomorphisms $R/\idealm^t \cong \hat{R}/\idealm^t_{\hat{R}}$ for every non-negative integer $t$ induced by the inclusion $R \longhookrightarrow \hat{R}$. Thus, we assume that $R$ is complete with $d$-dimensional nilradical.

 If $N$ is the nilradical of $R$, and $\mathfrak{a}$ its annihilator, then there exists a minimal prime $\mathfrak{p}$ of $\mathfrak{a}$ such that $\dim(R/\mathfrak{p}) = d$.  This implies that $\mathfrak{p}$ is a minimal (and hence, associated) prime of $R$, and so there exists a nonzero $x \in R$ such that $\mathfrak{p} = (0: x)$.  If $x \notin \mathfrak{p}$, then 
\[  0 = x \mathfrak{p} R_{\mathfrak{p}} = \mathfrak{p} R_{\mathfrak{p}} = N R_{\mathfrak{p}},\] which is impossible, since $\mathfrak{p}$ contains $\mathfrak{a}$, the annihilator of $N$, by assumption.  For what follows, we fix $x$ and $\mathfrak{p}$, and we note that $x^2 = 0$.

Let $b_q$ be any sequence of integers  indexed by the powers of $p$ such that $\lim_{q \to \infty} b_q = \lim_{q \to \infty} (q-b_q) = \infty$, and such that $\lim_{q \to \infty} \( b_q/q \)^d$ does not exist.  For example, one could take the sequence whose $p^e$-th term is $\lfloor p^e/3 \rfloor$ if $e$ is odd, and $\lfloor 2p^e/3 \rfloor$ if $e$ is even.   For every $q$, set \[ I_q=\m^q + x \m^{b_q}.\]  The fact that $x^2=0$ implies that $\I$ is a $p$-system of ideals of $R$.

%

Let $(A, \idealm_{A})$ be the local ring $R/xR$.  From the short exact sequence
\[ 0 \rightarrow xR / (xR \cap I_q) \rightarrow R/I_q \rightarrow A / I_q A \rightarrow 0\]
we have that 
\[ \frac{\ell_R(R/I_q)}{q^d}  =  \frac{\ell_R(xR / (xR \cap I_q))}{q^d} + \frac{\ell_A(A / I_q A)}{q^d}. \]
  
We claim that the second expression on the right hand side converges as $q\rightarrow \infty$ while the first expression on the right hand side diverges.  The first claim is evident, since $I_q A = \m_{A}^q$, so the sequence converges to a rational multiple of the multiplicity of the $d$-dimensional ring $A$.

The Artin-Rees lemma implies that for some $k>0$ and all $q \geq k$,
\[ xR\cap \m^q = \m^{q-k}(xR\cap \m^k) \subseteq x \m^{q-k} \subseteq x \m^{b_q}. \]
 Thus, $xR \cap I_q = x \m^{b_q}$ for sufficiently large~$q$.  If $(D, \m_D)$ is the local ring $R / \mathfrak{p}$, then this observation and the $R$-linear isomorphism $R/\mathfrak{p} \cong xR$ show that for large $q$ 
 \[ \frac{xR}{xR\cap I_q} \cong \frac{xR}{x\m^{b_q}}
 \cong  \frac{D}{\m_D^{b_q}}.\]
If $H(t) = \ell_D( R/ \m_D^t)$ is the Hilbert function of $D$ and $q$ is large, then
\[  \frac{\ell_R(xR / (xR \cap I_q))}{q^d}  = \frac{\ell_D(D/\m_D^{b_q})}{q^d} = \frac{H(b_q)}{q^d}, \] 
which limits to a rational multiple of $\lim_{q \to \infty}(b_q/q)^d$, and therefore does not exist.   The claim is established, and the Proposition follows.
\end{proof}

We now turn our attention to proving Theorem~\ref{m-primary limits exist: T}.  In light of Proposition \ref{limit DNE: P}, it is only necessary to justify the first statement.  To do so, we will rely on a handful of results that appear in the next subsection.

\begin{proof}[Proof of Theorem \ref{m-primary limits exist: T}]

The length of $R/I_q$ is unaffected by completion, and so we may assume that $R$ is complete.  If $N$ is the nilradical of $R$ and $A=R/N$, then Corollary \ref{reduced} and our assumption on the dimension of $N$ imply that the limit in question exists if and only if \[ \lim_{q \to \infty} \frac{\ell_A (A/I_qA)}{q^d} \] exists.  Thus, we may also assume that $R$ is reduced.  Moreover, 
Remark \ref{uniform multiplier: R} and Proposition \ref{pairs-to-reduced} allow us to further  assume that $R$ is a domain, and therefore, OK by Corollary \ref{complete local domain is OK: C}.  The claim then follows from Corollary~\ref{OK limit of m-primary ideals exists: E}.
\end{proof}


\subsection{Some simplifications}

In this subsection, we gather the results needed to reduce Theorem \ref{limits-of-pairs-reduced} and Theorem \ref{m-primary limits exist: T} to the case of an OK domain.   
Note that Corollary \ref{reduced} below is an adaption of \cite[Theorem~4.7]{Cutkosky13}, 
and Proposition~\ref{pairs-to-reduced} is an adaptation of the second half of the proof of \cite[Theorem~11.1]{Cutkosky13}.

\begin{lemma}
\label{basicBound: L}
Let $(R, \m)$ be a $d$-dimensional local ring of characteristic $p>0$, and $\I$ be a sequence of ideals of $R$ indexed by the powers of $p$ such that $\m^{cq} \subseteq I_q$ for some positive integer $c$ and all $q$ a power of $p$.  
If $M$ is a finitely generated $R$-module, then there exists $\alpha>0$ such that
$\ell_R(M/I_qM) \leq \alpha \cdot q^{\dim M}$ for every $q$.
\end{lemma}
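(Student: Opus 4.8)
\textbf{Proof plan for Lemma \ref{basicBound: L}.}

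The plan is to induct on $\dim M$, reducing to the case of a domain quotient via a prime filtration. First I would recall that any finitely generated $R$-module $M$ admits a finite filtration $0 = M_0 \subseteq M_1 \subseteq \cdots \subseteq M_n = M$ whose successive quotients $M_i/M_{i-1}$ are of the form $R/\idealp_i$ for various primes $\idealp_i$ of $R$. Since length is additive on short exact sequences and $I_q(M_i/M_{i-1})$ is a quotient of $(I_q M_i + M_{i-1})/M_{i-1}$, one gets $\ell_R(M/I_qM) \leq \sum_i \ell_R\big((R/\idealp_i)/I_q(R/\idealp_i)\big)$. Moreover each $\dim R/\idealp_i \leq \dim M$ (the primes appearing are supports of $M$, in particular contain the annihilator of $M$), so it suffices to prove the bound when $M = R/\idealp$ is a domain quotient, with the exponent $\dim M$ replaced by $\dim R/\idealp \leq \dim M$; summing finitely many such bounds and enlarging $\alpha$ finishes the general case.

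So assume $M = A := R/\idealp$, a local domain of dimension $\delta = \dim A$. Write $\idealm_A$ for the maximal ideal of $A$ and let $I_q A$ denote the image of $I_q$. The hypothesis $\idealm^{cq} \subseteq I_q$ gives $\idealm_A^{cq} \subseteq I_q A$, hence a surjection $A/\idealm_A^{cq} \twoheadrightarrow A/I_q A$ and therefore $\ell_R(A/I_qA) = \ell_A(A/I_qA) \leq \ell_A(A/\idealm_A^{cq})$. Now the Hilbert--Samuel theory of the $\idealm_A$-primary ideal $\idealm_A$ in the $\delta$-dimensional local ring $A$ gives a constant $\beta > 0$ with $\ell_A(A/\idealm_A^{t}) \leq \beta \cdot t^{\delta}$ for all $t \geq 1$ (the Hilbert--Samuel function is eventually a polynomial of degree $\delta$, hence bounded above by a constant times $t^\delta$ on all of $\NN$ after possibly enlarging $\beta$). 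Taking $t = cq$ yields $\ell_R(A/I_q A) \leq \beta c^{\delta} q^{\delta} \leq (\beta c^{\dim M}) \, q^{\dim M}$, using $q \geq 1$ and $\delta \leq \dim M$. Setting $\alpha$ to be the sum of the resulting constants over the finitely many primes in the filtration completes the proof.

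There is no serious obstacle here; the only point requiring a small amount of care is the bookkeeping in the reduction step — making sure the exponent stays $\leq \dim M$ across the filtration (which holds because every associated prime of a subquotient of $M$ contains $\operatorname{Ann}_R M$) and that $\alpha$ can be chosen uniformly in $q$, which it can since the filtration and the Hilbert--Samuel constants do not depend on $q$. One should also note that $\ell_R$ and $\ell_A$ agree on $A$-modules of finite length, so passing between the two causes no issue.
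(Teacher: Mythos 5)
Your proof is correct, and its essential engine is the same as the paper's: from $\m^{cq} \subseteq I_q$ one bounds $\ell_R(M/I_qM)$ by $\ell_R(M/\m^{cq}M)$ and invokes the fact that the Hilbert--Samuel function is eventually a polynomial of degree equal to the dimension, then enlarges the constant to cover the finitely many exceptional values of $q$. The one structural difference is your preliminary prime-filtration reduction to cyclic modules $R/\idealp$: this step is harmless (your bookkeeping that each $\idealp_i \supseteq \operatorname{Ann}_R M$, hence $\dim R/\idealp_i \leq \dim M$, and the additivity/surjectivity argument for $\ell_R(M/I_qM) \leq \sum_i \ell_R\bigl((R/\idealp_i)/I_q(R/\idealp_i)\bigr)$ are both fine), but it is superfluous, because the Hilbert--Samuel function $t \mapsto \ell_R(M/\m^{t}M)$ of the module $M$ itself already agrees with a polynomial of degree $\dim M$ for large $t$; the paper applies this directly to $M$ and the whole proof is two lines. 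So what your route buys is only a reduction to a case you then treat exactly as the paper treats the general case; dropping the filtration gives the same bound with less machinery.
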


\begin{proof} The length of $M/I_qM$ is less than or equal to that of $M/\m^{cq} M$, and as the latter length agrees with a polynomial in $cq$ of degree $\dim(M)$ for all but finitely many values of $q$, we may choose a  positive constant $\beta$ such that $\ell_R(M/\m^{cq}M)$ is bounded above by $\beta(cq)^{\dim M} = (\beta c^{\dim M}) \cdot q^{\dim M}$ for all values of $q$.
\end{proof}

\begin{corollary}
\label{reduced}
Let $(R, \m)$ and $\I$ be as in Lemma \ref{basicBound: L}.  If $N$ is an ideal of $R$ and $A=R/N$, then there exists $\beta > 0$ such that  \[ 0 \leq \ell_R(R/I_q) - \ell_A(A/I_qA) \leq \beta \cdot q^{\dim N}\] for all $q$ a power of $p$.  
\end{corollary}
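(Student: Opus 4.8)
The plan is to compute the difference of lengths exactly via a short exact sequence and then bound the resulting term using Lemma \ref{basicBound: L}. First I would observe that $I_q A = (I_q + N)/N$, so that $A/I_q A = R/(I_q + N)$, and that the natural surjection $R/I_q \longonto R/(I_q+N)$ has kernel $(I_q+N)/I_q \cong N/(N \cap I_q)$. Since length of a module over $A=R/N$ agrees with its length over $R$ (composition series correspond), additivity of length along
\[ 0 \to N/(N \cap I_q) \to R/I_q \to A/I_q A \to 0 \]
gives the identity $\ell_R(R/I_q) - \ell_A(A/I_q A) = \ell_R\big(N/(N \cap I_q)\big)$. This makes the left inequality $0 \leq \ell_R(R/I_q) - \ell_A(A/I_q A)$ immediate.

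For the right inequality, I would note that $I_q N \subseteq I_q \cap N$, so $N/(N\cap I_q)$ is a quotient of $N/I_q N$, whence $\ell_R\big(N/(N\cap I_q)\big) \leq \ell_R(N/I_q N)$. Now $N$ is a finitely generated $R$-module (the ambient local rings here are Noetherian), so Lemma \ref{basicBound: L} applied to $M = N$ produces a constant $\alpha > 0$ with $\ell_R(N/I_q N) \leq \alpha \cdot q^{\dim N}$ for all $q$ a power of $p$; here $\dim N$ is the $R$-module dimension $\dim\big(R/\operatorname{Ann}_R N\big)$, consistent with the convention recalled before Theorem \ref{m-primary limits exist: T}. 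Taking $\beta = \alpha$ completes the argument.

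There is no real obstacle here: the whole content is the exactness bookkeeping that turns the length difference into $\ell_R(N/(N\cap I_q))$, plus the sandwich $\ell_R(N/(N\cap I_q)) \leq \ell_R(N/I_qN)$ that lets Lemma \ref{basicBound: L} take over. The only point requiring a moment's care is confirming that $\ell_A(A/I_qA) = \ell_R(A/I_qA)$ so that the short exact sequence may be read over $R$, and that the hypothesis $\m^{cq} \subseteq I_q$ of Lemma \ref{basicBound: L} is exactly the one assumed on $\I$ in the statement.
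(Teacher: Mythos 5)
Your proposal is correct and follows essentially the same route as the paper: the same short exact sequence $0 \to N/(N\cap I_q) \to R/I_q \to A/I_qA \to 0$ reduces the difference of lengths to $\ell_R(N/(N\cap I_q))$, and the same observation $I_qN \subseteq I_q \cap N$ lets Lemma \ref{basicBound: L} (applied to $M=N$) give the bound $\beta\cdot q^{\dim N}$. No gaps.
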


\begin{proof} Note that $\ell_A(A/I_qA) = \ell_R(A/I_qA)$, and therefore the short exact sequence $0 \to N/N \cap I_q \to R/I_q \to A/I_qA \to 0$ implies that \[ \ell_R(R/I_q) - \ell_A(A/I_qA) = \ell_R(N/N \cap I_q).\]  

However, as $I_q N \subseteq I_q \cap N$,  Lemma \ref{basicBound: L} then implies that there exists $\beta > 0$ such that $\ell_R(N/I_q \cap N) \leq \ell_R(N/I_qN) \leq \beta  q^{\dim N}$.
\end{proof}

\begin{corollary}
\label{flexibility F-sig of pairs: C}

Let $(R, \m)$ and $\I$ be as in Lemma \ref{basicBound: L}.  If  $L_{\bullet}$ is a sequence of ideals in $R$ indexed by the powers of $p$ such that \[ I_q \subseteq L_q \subseteq (I_q: x)\] for some nonzerodivisor $x$ in $R$ and for every $q$ a power of $p$, then \[ \ell( R/I_q)  - \ell ( R/L_q) \leq \gamma \cdot q^{d-1} \] for some positive constant $\alpha$ and for all $q$ a power of $p$.

\end{corollary}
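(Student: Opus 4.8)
The plan is to convert the difference of colengths into the length of a single subquotient and then control that subquotient using the hypothesis $L_q \subseteq (I_q : x)$ together with Lemma~\ref{basicBound: L}. Since $\m^{cq} \subseteq I_q \subseteq L_q$, all of $R/I_q$, $R/L_q$, and $L_q/I_q$ have finite length, and the short exact sequence $0 \to L_q/I_q \to R/I_q \to R/L_q \to 0$ gives $\ell(R/I_q) - \ell(R/L_q) = \ell(L_q/I_q)$. So it suffices to bound $\ell_R(L_q/I_q)$ by a constant times $q^{d-1}$.

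First I would note that $L_q \subseteq (I_q : x)$ means precisely that the submodule $L_q/I_q$ of $R/I_q$ is annihilated by $x$, hence is contained in the kernel of multiplication by $x$ on $R/I_q$, namely $(I_q : x)/I_q$; thus $\ell(L_q/I_q) \le \ell((I_q:x)/I_q)$. Next, for an endomorphism of a finite length module the kernel and cokernel have equal length, and applying this to multiplication by $x$ on $R/I_q$ identifies $\ell((I_q:x)/I_q)$ with $\ell\bigl(R/(I_q + xR)\bigr) = \ell_R\bigl((R/xR)/I_q(R/xR)\bigr)$. Finally, assuming $x \in \m$ (the case where $x$ is a unit being trivial, since then $L_q = (I_q:x) = I_q$), the ring $A := R/xR$ is an $R$-module with $\dim A \le d-1$, because every minimal prime of $A$ properly contains a minimal prime of $R$ and therefore has strictly smaller coheight. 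Lemma~\ref{basicBound: L}, applied to the finitely generated $R$-module $M = A$, then yields $\alpha > 0$ with $\ell_R\bigl(A/I_q A\bigr) \le \alpha\, q^{\dim A} \le \alpha\, q^{d-1}$ for all $q$. Chaining the inequalities gives $\ell(R/I_q) - \ell(R/L_q) \le \alpha\, q^{d-1}$, so we may take $\gamma = \alpha$.

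I do not anticipate a genuine obstacle here; the proof is only a few lines. The two points that warrant a sentence of justification are the finiteness of length of the modules in play, so that the kernel--cokernel length identity is legitimate, and the inequality $\dim R/xR \le d-1$ for a nonzerodivisor $x \in \m$, which is where the nonzerodivisor hypothesis is actually used (everything else only uses $L_q \subseteq (I_q:x)$).
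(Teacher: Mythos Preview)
Your argument is correct and is essentially the paper's proof: the paper uses the same short exact sequence $0 \to L_q/I_q \to R/I_q \to R/L_q \to 0$, the same inclusion $L_q/I_q \hookrightarrow (I_q:x)/I_q$, and the four-term exact sequence $0 \to (I_q:x)/I_q \to R/I_q \xrightarrow{x} R/I_q \to A/I_qA \to 0$ (which encodes exactly your kernel--cokernel length identity) before invoking Lemma~\ref{basicBound: L} for $A = R/xR$. The only cosmetic difference is that you separate out the trivial case of $x$ a unit and spell out why $\dim R/xR \le d-1$, whereas the paper leaves these implicit.
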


\begin{proof}  If $A = R/xR$, then the exact sequence
\[ 0 \to L_q/ I_q \to R/I_q \to R/L_q \to 0, \] 
the injection $L_q /I_q  \longhookrightarrow (I_q : x)/I_q$, and the exact sequence
\[ 0 \to (I_q : x)/ I_q \to R/I_q \stackrel{x}{\longrightarrow} R/I_q \to A/ I_q A \to 0, \]
imply that the difference in question is bounded by $\ell(A/I_qA)$.  The Corollary then follows from Lemma \ref{basicBound: L}.
\end{proof}

\begin{proposition}
\label{pairs-to-reduced} Consider the following property of a local ring. \ \\

\noindent ($\star$) For every pair of $p$-families of ideals $I_\bullet$ and $J_\bullet$ in a local ring $(A, \idealm_A)$ of characteristic $p>0$ such that $I_q \subseteq J_q$ for every $q$, and such that $\idealm_A^{cq} \cap J_q = \idealm_A^{cq} \cap I_q$ for some positive integer $c$ and for every $q$, the limit \[ \lim_{q\rightarrow\infty} \frac{\ell_A(J_q / I_q)}{q^{\dim(A)}} \] exists.\\

 If $(R, \m_R)$ is reduced, and the quotient of $R$ by each of its minimal primes satisfies the property ($\star$), then $R$ satisfies the property ($\star$) itself.
\end{proposition}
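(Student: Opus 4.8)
The plan is to reduce the length $\ell_R(J_q/I_q)$ to a sum of lengths taken over the quotients $R/\idealp_i$, where $\idealp_1, \dots, \idealp_n$ are the minimal primes of $R$, up to an error term that is $O(q^{d-1})$ and therefore vanishes after dividing by $q^d$ and passing to the limit. Since $R$ is reduced, the intersection $\idealp_1 \cap \cdots \cap \idealp_n$ is zero, so the natural map $R \longhookrightarrow \prod_{i=1}^n R/\idealp_i$ is injective; moreover one can arrange (reordering the primes) that $\dim(R/\idealp_i) = d$ for $i$ in some initial segment and $\dim(R/\idealp_i) < d$ otherwise, and the low-dimensional factors will only contribute to the error. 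First I would set $A_i = R/\idealp_i$ and, for each $i$, let $I_q^{(i)} = I_q A_i$ and $J_q^{(i)} = J_q A_i$ be the expansions; these are again $p$-families by the observation in the excerpt that termwise expansions of $p$-families are $p$-families, and they still satisfy the hypothesis $I_q^{(i)} \subseteq J_q^{(i)}$ and $\idealm_{A_i}^{cq} \cap J_q^{(i)} = \idealm_{A_i}^{cq} \cap I_q^{(i)}$ (the latter because contracting $\idealm^{cq}$ from $A_i$ lands inside $\idealm^{cq}$ of $R$, up to adjusting $c$; one uses here that $\idealm_{A_i}^{cq}$ pulls back into $\idealm_R^{cq}$). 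By hypothesis ($\star$) applied to $A_i$, the limit $\lim_q \ell_{A_i}(J_q^{(i)}/I_q^{(i)})/q^{\dim A_i}$ exists for each $i$.

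The heart of the argument is a \emph{devissage}: compare $\ell_R(J_q/I_q)$ with $\sum_{i=1}^n \ell_{A_i}(J_q^{(i)}/I_q^{(i)})$. The standard way is to filter via the chain of ideals $0 = \idealp_1 \cap \cdots \cap \idealp_n \subseteq \idealp_2 \cap \cdots \cap \idealp_n \subseteq \cdots \subseteq \idealp_n \subseteq R$, giving successive quotients $N_i := (\idealp_{i+1} \cap \cdots \cap \idealp_n)/(\idealp_i \cap \idealp_{i+1} \cap \cdots \cap \idealp_n)$, each of which is a torsion-free $R/\idealp_i$-module (it embeds in $R/\idealp_i$), hence has dimension $\dim(R/\idealp_i)$ when nonzero. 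Applying the snake lemma / additivity of length along this filtration, $\ell_R(J_q/I_q) - \sum_i \ell_R(N_i/I_q N_i)$ is controlled by the lengths $\ell_R(N_i/(I_q N_i : \text{something}))$-type error terms; more precisely, each step contributes an error bounded by a length of the form $\ell_R(N_i / (\text{image of } J_q \cap (\text{ideal}) ))$ which, because $I_q N_i \subseteq (\text{the relevant intersection with } I_q) \subseteq$ a module annihilated by passing mod $\idealp_i$, is bounded by $\ell_{A_i}$ of a cyclic module and hence by Lemma \ref{basicBound: L} is $O(q^{\dim A_i})$. The point is that the \emph{difference} $\ell_R(J_q/I_q) - \ell_R(J_q^{(i)}\text{-contributions})$ involves only modules supported on proper subvarieties, i.e. of dimension $< d$, EXCEPT that we are subtracting off exactly the top-dimensional $A_i$ terms, so what remains is genuinely $O(q^{d-1})$. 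Finally, since $N_i$ is a torsion-free $A_i$-module of rank $r_i$, one compares $\ell_R(N_i/I_q N_i)$ with $r_i \cdot \ell_{A_i}(A_i/I_q A_i)$ up to $O(q^{\dim A_i - 1})$ using Corollary \ref{flexibility F-sig of pairs: C} (embedding $A_i^{r_i} \hookrightarrow N_i \hookrightarrow A_i^{r_i}$ with torsion cokernel, killed by a nonzerodivisor), and likewise for $J_q$; then the $r_i$ cancel in the difference $\ell_R(N_i/I_q N_i) - \ell_R(N_i/J_q N_i)$, reducing everything to $\ell_{A_i}(J_q^{(i)}/I_q^{(i)})$.

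Assembling: $\ell_R(J_q/I_q)/q^d$ equals $\sum_{i : \dim A_i = d} \ell_{A_i}(J_q^{(i)}/I_q^{(i)})/q^d$ plus a term that is $O(1/q)$, and each summand converges by hypothesis ($\star$) for $A_i$ (the summands with $\dim A_i < d$ are themselves $O(1/q)$ by Lemma \ref{basicBound: L} and drop out). Hence $\lim_q \ell_R(J_q/I_q)/q^d$ exists, which is exactly property ($\star$) for $R$.

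The main obstacle I anticipate is bookkeeping the error terms in the devissage carefully enough to be sure they are all $O(q^{d-1})$ and not merely $O(q^d)$ — in particular, verifying that the condition $\idealm^{cq} \cap I_q = \idealm^{cq} \cap J_q$ is inherited (with a possibly enlarged constant) by each $N_i$ and each $A_i$, and that the torsion cokernels of $A_i^{r_i} \hookrightarrow N_i$ are annihilated by a single nonzerodivisor uniformly in $q$ so that Corollary \ref{flexibility F-sig of pairs: C} applies. A secondary subtlety is that $I_q N_i$ need not equal $I_q A_i \cap N_i$, but the inclusion $I_q N_i \subseteq I_q A_i \cap N_i$ together with Lemma \ref{basicBound: L} bounds the discrepancy, which is all that is needed.
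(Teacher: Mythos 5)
Your overall strategy (reduce to the quotients by the minimal primes) points in the right direction, but as written the argument has two genuine gaps, and neither is mere bookkeeping. The first is that you apply property ($\star$) in $A_i=R/\idealp_i$ to the naive expansions $I_qA_i\subseteq J_qA_i$, asserting that they inherit the intersection condition $\m_{A_i}^{c'q}\cap I_qA_i=\m_{A_i}^{c'q}\cap J_qA_i$. The justification you give --- that contracting $\m_{A_i}^{cq}$ lands inside $\m_R^{c'q}$ after adjusting the constant --- is false: the preimage of $\m_{A_i}^{cq}$ in $R$ is $\m_R^{cq}+\idealp_i$, which contains $\idealp_i$ and hence lies in no power of $\m_R$. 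Unwinding, what you would actually need is $J_q\cap(\m_R^{c'q}+\idealp_i)\subseteq I_q+\idealp_i$; the hypothesis only controls $J_q\cap\m_R^{cq}$, and for an element $j=m+p$ of $J_q$ with $m\in\m_R^{c'q}$ and $p\in\idealp_i$ there is no way to place $m$ (or $j$) in $I_q+\idealp_i$, since $m$ itself need not lie in $J_q$. This is exactly the subtlety the paper's proof is built to avoid: after an Artin--Rees reduction replacing $I_q$ by $\omega_{cq}\cap J_q$, where $\omega_n=(\m_R^n+\idealp_1)\cap\cdots\cap(\m_R^n+\idealp_m)$, one filters $J_q= L^{(0)}_q\supseteq L^{(1)}_q\supseteq\cdots\supseteq L^{(m)}_q=\omega_{cq}\cap J_q$ with $L^{(i)}_q=J_q\cap(\m_R^{cq}+\idealp_1)\cap\cdots\cap(\m_R^{cq}+\idealp_i)$; each successive quotient is isomorphic to $L^{(i-1)}_qR_i/\bigl(L^{(i-1)}_qR_i\cap\m_{R_i}^{cq}\bigr)$, and ($\star$) is applied in $R_i$ to the pair $\bigl(L^{(i-1)}_qR_i\cap\m_{R_i}^{cq},\,L^{(i-1)}_qR_i\bigr)$, for which the intersection condition holds trivially with the same constant. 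The lengths add exactly along this filtration, so no error terms ever appear.

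The second gap is that your devissage and its $O(q^{d-1})$ error estimates are powered by Lemma \ref{basicBound: L} and Corollary \ref{flexibility F-sig of pairs: C}, both of which require $\m^{cq}\subseteq I_q$, i.e.\ $\m$-primary families with a uniform constant. In Proposition \ref{pairs-to-reduced} the families need not be $\m$-primary --- this generality is the whole point, e.g.\ for generalized Hilbert--Kunz multiplicity, where $I_q=I^{[q]}$ for a non-primary ideal $I$. The hypothesis only gives $\m^{cq}J_q\subseteq I_q$, which makes $J_q/I_q$ finite length, while the intermediate quantities your rank/torsion comparison manipulates, such as $\ell_{A_i}(A_i/I_qA_i)$ or $\ell_R(N_i/I_qN_i)$, can be infinite, and the cited lemmas do not apply to bound the discrepancies. (A comparison of the kind you propose does appear in the paper, in Lemma \ref{domain} and Proposition \ref{volume-as-sum: P}, but only under the $\m$-primary hypothesis $\m^{cq}\subseteq I_q$.) So the argument needs to be restructured along the exact-filtration lines above rather than patched with asymptotic error bounds.
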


\begin{proof}Suppose that $\I$ and $\J$ are $p$-bodies in $R$ with $I_q \subseteq J_q$ for all $q$, and such that $\m_R^{bq} \cap I_q = \m_R^{bq} \cap J_q$ for some positive integer $b$ and all $q$.

Let $\idealp_1,\dots,\idealp_m$ be the minimal primes of $R$, let $(R_i, \idealm_{R_i})$ be the quotient of $R$ by $\idealp_i$, and let $T = \oplus_{i=1}^m R_i$. For every integer $n \geq 0$, set  
\[ \omega_n=\idealm_{R}^n T \cap R = (\idealm_R^n + \idealp_1) \cap \cdots \cap (\idealm_R^n + \idealp_m). \]

By the Artin-Rees lemma, there exists an integer $k \geq 0$ such that \[ \omega_n  = \m_R^{n-k} \omega_k \subseteq \m_R^{n-k} \] for all $n \geq k$.  Thus, if $c = b+1$ and $q \geq k$, it follows that $\omega_{cq}$ is contained in $\m^{bq}$, so that $\omega_{cq} \cap I_q = \omega_{cq} \cap J_q$, and therefore 
\[\ell_R(J_q/I_q) = \ell_R(J_q / (\omega_{c q} \cap J_q)) - \ell_R(I_q / (\omega_{c q}\cap I_q ))\]
for all $q$ large. Thus, we may assume that $I_q$ is of the form $\omega_{cq} \cap J_q$ for some positive integer~$c$.

Now, define $p$-families of ideals $L^{(0)}_\bullet, L^{(1)}_{\bullet},\dots, L^{(m)}_{\bullet}$ by $L^{(0)}_q = J_q$ and 
\[ L^{(i)}_q = J_q \cap (\m_R^{cq} + \idealp_1) \cap \cdots \cap (\m_R^{cq} + \idealp_i) \] for all $1 \leq i \leq m$.  In particular, $L^{(m)}_q=\omega_{cq} \cap J_q$, and so 
\[ \ell_R( J_q / \omega_{cq} \cap J_q) =  \ell_R(L^{(0)}_q / L^{(m)}_q) = \sum_{i=1}^m \ell_R (L^{(i-1)}_q / L^{(i)}_q). \]

Therefore, it suffices to show that 
\[\lim_{q\rightarrow \infty} \frac{\ell_R( L^{(i-1)}_q / L^{(i)}_q )}{ q^d}\] exists for all $1 \leq i \leq m$. However,  the quotient map $L^{(i-1)}_q \twoheadrightarrow L^{(i-1)}_q R_i$ and the recursion $L_q^{(i)} = L_q^{(i-1)} \cap (\m_R^{cq} + \idealp_i)$ induce isomorphisms

\[\frac{L^{(i-1)}_q} { L^{(i)}_q}  = \frac{L^{(i-1)}_q} { L^{(i-1)}_q \cap (\idealm_R^{cq} + \idealp_i)}  \cong \frac{L^{(i-1)}_q R_i} { L^{(i-1)}_q R_i \cap \idealm_{R_i}^{cq}},\] and it is apparent that the 
sequences of ideals in the numerator and denominator of the right-most term above are $p$-families in the domain $R_i$ that satisfy the condition in the statement of the property $(\star)$. As the Krull dimension of each $R_i$ is bounded above by the Krull dimension of $R$, we conclude that
\[\lim_{q\rightarrow \infty} \frac{\ell_R( L^{(i-1)}_q / L^{(i)}_q )}{ q^d} = \lim_{q\rightarrow \infty} \frac{\ell_{R_i} (L^{(i-1)}_q R_i / (L^{(i-1)}_q R_i \cap \idealm_{R_i}^{cq}))}{q^d}\] exists, which is what we needed to show.
\end{proof}

\subsection{A refined description of volumes}
\label{volume formulas: SS}
In this subsection, we derive a refinement of an important special case of Theorem \ref{limits-of-pairs-reduced}.  We begin with a useful lemma.

\begin{lemma}
\label{domain}
Let $(R, \m)$ be a $d$-dimensional local ring of characteristic $p>0$, and $\I$ be a sequence of ideals of $R$ indexed by the powers of $p$ such that $\m^{cq} \subseteq I_q$ for some positive integer $c$ and all $q$ a power of $p$.  Suppose $R$ is reduced, and that $\idealp_1, \dots, \idealp_n$ are the minimal primes of $R$.  If $R_i= R/\idealp_i$, then there exists $\delta > 0$ such that for all $q$,  
\[ \left| \sum_{i=1}^n \ell_{R_i}( R_i / I_q R_i) - \ell_R(R/I_q) \right| \leq \delta \cdot q^{d-1}. \]  
\end{lemma}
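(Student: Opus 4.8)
The plan is to compare $R$ with the module-finite overring $T=\bigoplus_{i=1}^{n}R/\idealp_i=\bigoplus_i R_i$. Since $R$ is reduced the nilradical $\bigcap_i\idealp_i$ is zero, so the diagonal map $R\hookrightarrow T$ is injective; set $C=T/R$, a finitely generated $R$-module. The structural input I would isolate first is that $C$ is ``small'': localizing at a minimal prime $\idealp_i$ annihilates every summand $R/\idealp_j$ with $j\neq i$ and identifies $R/\idealp_i$ with the field $R_{\idealp_i}$, so $(T/R)_{\idealp_i}=0$; hence $\mathrm{Supp}(C)$ contains no minimal prime of $R$. This gives at once that $\dim C\le d-1$ and, since for a reduced ring the minimal primes are exactly the associated primes, that $\mathrm{Ann}_R(C)$ lies in no associated prime, hence by prime avoidance contains a nonzerodivisor $x$. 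As $x\in\mathrm{Ann}_R(C)$, we have $xT\subseteq R$.

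Next I would extract the length comparison from the four-term exact sequence recording the kernel and cokernel of the natural map $R/I_q\to T/I_qT$:
\[ 0\to (R\cap I_qT)/I_q\to R/I_q\to T/I_qT\to C/I_qC\to 0. \]
Since $\m^{cq}\subseteq I_q$ all four modules have finite length, and as $T/I_qT=\bigoplus_i R_i/I_qR_i$ we get $\ell_R(T/I_qT)=\sum_i\ell_{R_i}(R_i/I_qR_i)$; so additivity of length yields
\[ \sum_{i=1}^{n}\ell_{R_i}(R_i/I_qR_i)-\ell_R(R/I_q)=\ell_R(C/I_qC)-\ell_R\big((R\cap I_qT)/I_q\big). \]
It then suffices to bound each term on the right by a constant multiple of $q^{d-1}$. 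For $\ell_R(C/I_qC)$ this is Lemma~\ref{basicBound: L}, since $\dim C\le d-1$. For the other term the point is that $(R\cap I_qT)/I_q$ is annihilated by the nonzerodivisor $x$: indeed $x(R\cap I_qT)\subseteq xI_qT=I_q(xT)\subseteq I_qR=I_q$. Hence $(R\cap I_qT)/I_q\subseteq (I_q:x)/I_q$, and additivity of length applied to $0\to (I_q:x)/I_q\to R/I_q\xrightarrow{x}R/I_q\to R/(I_q+xR)\to 0$ (exact because $x$ is a nonzerodivisor on $R$) gives $\ell_R\big((I_q:x)/I_q\big)=\ell_R\big((R/xR)/I_q(R/xR)\big)$. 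Since $\dim R/xR=d-1$, applying Lemma~\ref{basicBound: L} to the module $R/xR$ bounds this by a constant multiple of $q^{d-1}$. Taking $\delta$ to be the sum of the two constants finishes the proof; note that no completeness or excellence hypothesis is used.

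The step I expect to be the heart of the matter — and would be most careful about — is the observation that $(R\cap I_qT)/I_q$ is killed by a nonzerodivisor. Without it, this term is only a quotient of $\mathrm{Tor}_1^R(C,R/I_q)$, for which the estimate coming directly from a free resolution of $C$ gives $O(q^{d})$, which is too weak; the improvement to $O(q^{d-1})$ uses precisely both properties of the conductor $\mathrm{Ann}_R(T/R)$, namely that it annihilates $C$ and that it contains a nonzerodivisor. Everything else is bookkeeping: checking that all modules in sight have finite length (each is killed by a power of $\m$ because $\m^{cq}\subseteq I_q$), the elementary fact that $\dim R/xR=d-1$ for a nonzerodivisor $x$ in a $d$-dimensional local ring, and the identity $\ell_R(R/(I_q+\idealp_i))=\ell_{R_i}(R_i/I_qR_i)$, which holds because a composition series over $R_i$ is one over $R$ with the same residue-field quotients.
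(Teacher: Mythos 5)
Your proposal is correct and takes essentially the same route as the paper's proof: you compare $R$ with $T=\bigoplus_i R/\idealp_i$ via the four-term exact sequence attached to $R/I_q \to T/I_qT$, produce a conductor nonzerodivisor $x$ with $xT\subseteq R$, bound the kernel through the containment $(R\cap I_qT)/I_q \subseteq (I_q:x)/I_q$ (this is exactly the paper's Corollary~\ref{flexibility F-sig of pairs: C}, which you reprove inline), and bound the cokernel by Lemma~\ref{basicBound: L} applied to a module of dimension at most $d-1$. The only cosmetic differences are that you identify the cokernel as $(T/R)/I_q(T/R)$ and use $\dim(T/R)\le d-1$ directly, where the paper instead surjects $(A/xA)/I_q(A/xA)$ onto it, and that you find $x$ by prime avoidance rather than by clearing denominators in the total quotient ring; both variants are sound.
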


\begin{proof}  As $R$ is reduced, we have embeddings
\[  R \longhookrightarrow A:=\prod_{i=1}^n R_i \longhookrightarrow B:= \prod_{i=1}^n \mathbb{F}_i, \]
where $\mathbb{F}_i$ is the fraction field of $R_i$.  It is well-known that $B$ is isomorphic to the localization of $R$ at the multiplicative set consisting of its nonzerodivisors (i.e., the ring of total fractions of $R$).  As $A$ is a finitely-generated $R$-module, it follows that there exists an element $x \in R$ that is a nonzerodivisor on both $A$ and $R$, and such that $x A \subseteq R$.  

Next, observe that $\ell_R(A/I_q A) = \sum_{i=1}^n \ell_{R_i} (R_i/I_q R_i)$, and so it suffices to bound $| \ell_R(A/I_qA) - \ell_R(R/I_q)|$ from above.  However, the inclusion $R \longhookrightarrow A$ gives rise to an exact sequence of $R$-modules
 \begin{equation}
\label{basicSES: e}
 0 \to (I_q A \cap R)/I_q \to R/I_q \to A/I_q A \to C_q \to 0,
\end{equation}
 where $C_q \cong A/(R + I_qA)$.  In light of this, it suffices to bound the $R$-module lengths of $(I_q A \cap R )/ I_q )$ and $C_q$.

We begin by considering the first length.  By our choice of $x \in R$, we have that $I_q \subseteq I_qA \cap R \subseteq (I_q: x)$, and Corollary \ref{flexibility F-sig of pairs: C} then tells us that
\[ \ell_R((I_q A \cap R)/I_q) \leq \alpha \cdot q^{d-1}\]
for some positive constant $\alpha$ and for all values of $q$.
  
We now focus our attention on $\ell_R(C_q)$.  Set $\bar{A} = A/xA$.  By our choice of $x$, we have that $I_q A + xA \subseteq I_q A + R$, and therefore, 
\[ \bar{A}/I_q \bar{A} \cong A/(I_qA + xA) \twoheadrightarrow A/(I_qA + R) \cong C_q.\] 
It follows from this and Lemma \ref{basicBound: L} that there exists $\beta > 0$ such that
\[ \ell_R(C_q) \leq \ell_R(\bar{A}/I_q \bar{A}) \leq \beta q^{\dim{\bar{A}}} = \beta \cdot q^{d-1}.\]  
for all values of $q$.

In summary, \eqref{basicSES: e} and the bounds established above show that 
\[ \left| \ell_R(A/I_qA) - \ell_R(R/I_qR) \right| \leq \max\{\alpha,\beta\} \cdot q^{d-1}\] for all $q$,  which allows us to conclude the proof.
\end{proof}

\begin{proposition}
\label{volume-as-sum: P}
Let $(R,\m)$ be a reduced local ring of characteristic $p>0$ such that the quotient of $R$ by each minimal prime is an OK domain (e.g., $R$ is a reduced complete local ring of characteristic $p>0$).  If $\I$ is a $p$-family of $\m$-primary ideals in $R$, then \[ \vol_R(\I) = \sum \vol_A(\I \hspace{.5mm} A ),\] where the sum above is over all quotients $A=R/ \idealp$ of $R$ by a minimal prime $\idealp$ such that $\dim(R) = \dim(R/\idealp)$.  
\end{proposition}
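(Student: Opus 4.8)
The plan is to derive the formula from the length comparison of Lemma~\ref{domain} together with the existence statement for OK domains, Corollary~\ref{OK limit of m-primary ideals exists: E}. Write $d=\dim R$, let $\idealp_1,\dots,\idealp_n$ be the minimal primes of $R$, set $R_i=R/\idealp_i$, and put $d_i=\dim R_i\le d$. By Remark~\ref{uniform multiplier: R} there is a positive integer $c$ with $\m^{cq}\subseteq I_q$ for every $q$. For each $i$ the termwise expansion $\I R_i$ (with $q$-th term $I_qR_i$) is again a $p$-family, since $(I_qR_i)^{[p]}=I_q^{[p]}R_i\subseteq I_{pq}R_i$, and its terms are $\m_{R_i}$-primary because $\m_{R_i}^{cq}=\m^{cq}R_i\subseteq I_qR_i$. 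So each $\I R_i$ is a legitimate input to the volume machinery over the OK domain $R_i$.

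Next I would split off the minimal primes of non-maximal dimension. Viewing $R_i$ as a finitely generated $R$-module of dimension $d_i$, Lemma~\ref{basicBound: L} gives a constant $\alpha_i>0$ with $\ell_{R_i}(R_i/I_qR_i)=\ell_R(R_i/I_qR_i)\le \alpha_i\,q^{d_i}$ for all $q$; in particular, if $d_i<d$ then $q^{-d}\,\ell_{R_i}(R_i/I_qR_i)\to 0$. For the indices $i$ with $d_i=d$, the hypothesis that $R_i$ is an OK domain lets me fix a valuation OK relative to $R_i$ and invoke Corollary~\ref{OK limit of m-primary ideals exists: E}: the volume $\vol_{R_i}(\I R_i)=\lim_{q\to\infty}q^{-d}\,\ell_{R_i}(R_i/I_qR_i)$ exists and is finite.

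Finally, Lemma~\ref{domain} supplies a constant $\delta>0$ with $\bigl|\sum_{i=1}^n\ell_{R_i}(R_i/I_qR_i)-\ell_R(R/I_q)\bigr|\le \delta\,q^{d-1}$ for all $q$. Dividing by $q^d$ and letting $q\to\infty$, the error term $\delta q^{-1}\to 0$, each summand with $d_i<d$ tends to $0$, and each summand with $d_i=d$ tends to $\vol_{R_i}(\I R_i)$. Hence $q^{-d}\,\ell_R(R/I_q)$ converges; that is, $\vol_R(\I)$ exists (so the left-hand side is defined) and equals $\sum_{i:\,d_i=d}\vol_{R_i}(\I R_i)$, which is exactly the stated sum over the quotients $A=R/\idealp$ by minimal primes with $\dim(R/\idealp)=\dim R$.

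There is no genuinely hard step here: the proposition is an assembly of Lemma~\ref{domain} and Corollary~\ref{OK limit of m-primary ideals exists: E}. The points that need a little care are (i) verifying that passing from $\I$ to $\I R_i$ preserves both the $p$-family condition and $\m$-primariness, and (ii) confirming that the minimal primes $\idealp_i$ with $\dim(R/\idealp_i)<d$ contribute nothing after normalizing by $q^d$, so that only the top-dimensional components survive in the limit --- this is precisely where the $O(q^{d-1})$ bound of Lemma~\ref{domain} and the $O(q^{d_i})$ bound of Lemma~\ref{basicBound: L} are used.
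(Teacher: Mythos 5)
Your proposal is correct and follows essentially the same route as the paper: compare $\ell_R(R/I_q)$ with $\sum_i \ell_{R_i}(R_i/I_q R_i)$ via Lemma~\ref{domain}, invoke the OK-domain existence result (Corollary~\ref{OK limit of m-primary ideals exists: E}) for the top-dimensional quotients, and discard the lower-dimensional quotients after dividing by $q^{\dim R}$. The only minor difference is that the paper cites Theorem~\ref{limits-of-pairs-reduced} for the existence of $\vol_R(I_{\bullet})$ and of the quotient volumes, whereas you obtain the existence of $\vol_R(I_{\bullet})$ as a byproduct of the individual limits together with the $O(q^{d-1})$ error bound, a harmless and slightly more self-contained shortcut.
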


\begin{proof}
By Theorem \ref{limits-of-pairs-reduced}, we know that the volume of $\I$ in $R$ and the volume of $\I$ modulo any minimal prime of $R$ exist.  If $\idealp_1, \dots, \idealp_n$ are the minimal primes of $R$, and $R_i = R/\idealp_i$ for each such prime, it then follows from this and Lemma \ref{domain} that
\[ \vol_R(\I) =  \lim_{q \to \infty} \( \sum_{i=1}^n  \frac{ \ell ( R_i / I_q R_i)}{q^{\dim R}} \)  =  \sum_{i=1}^n  \( \lim_{q \to \infty} \frac{ \ell ( R_i / I_q R_i)}{q^{\dim R}} \), \] and the theorem follows upon observing that the $i$-th term in the last sum above is zero whenever $\dim(R_i) \neq \dim(R)$.
\end{proof}

\begin{remark} 
\label{volume-as-sum-general: R}
 Proposition \ref{volume-as-sum: P} can be generalized as follows:  Suppose that $\I$ is a $p$-family of $\m$-primary ideals in a local ring $(R, \m)$ of characteristic $p>0$ such that the $R$-module dimension of the nilradical of the completion $\hat{R}$ of $R$ is less than $\dim(R)$.   If $A$ is the quotient of $\hat{R}$ by its nilradical, then as in the proof of Theorem \ref{m-primary limits exist: T}, we know that $\vol_R(\I) = \vol_{\hat{R}}( \I \hat{R}) =  \vol_A(\I A)$, and the latter volume can be described using Proposition~\ref{volume-as-sum: P}.
\end{remark}

\subsection{Consequences of convexity}

In this subsection, we include some results whose proofs rely strongly on convexity-based arguments.

\begin{theorem}
\label{vanishing: T}  Let $(R, \m)$ be an OK local domain of characteristic $p>0$ (e.g., $R$ is an excellent local domain of characteristic $p>0$) and dimension $d$, and let $\I$ be a $p$-family of $\m$-primary ideals in $R$. Then $\vol_R(\I)$ is positive if and only if there exists $q_{\circ}$ a power of $p$ such that  
$I_{qq_{\circ}} \subseteq \m^{[q]}$ for all $q$ a power of $p$.
\end{theorem}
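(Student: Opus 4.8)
The plan is to run the statement through the convexity machinery of Sections~\ref{semigroups-p-stuff} and~\ref{limits-as-volumes}, and then translate the resulting geometric condition back into ring-theoretic terms. I would fix an OK valuation $\nu$ relative to $R$ as in Setup~\ref{OK: SU}, with $S = \nu(R)$, cone $C = \cone(S)$, embedding vector $\vv{a}$, and set $\Delta = \Delta(S, \T(\I))$, recalling that $\T(I_q) = \nu(I_q)$. By Corollary~\ref{OK limit of m-primary ideals exists: E} we have $\vol_R(\I) = [\kk_V : \kk]\cdot \vol_{\RR^d}(C \setminus \Delta)$, so it suffices to decide when $\vol_{\RR^d}(C \setminus \Delta) > 0$. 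Now $\Delta$, being a $p$-body, is $\cone(S)$-convex, and writing $\Delta = \big( \bigcup_q \tfrac{1}{q}\,\nu(I_q)\big) + C$ exhibits it in the form $U = V + \Gamma$ of Setup~\ref{convexLemmas: SU} (with $\Gamma = C$ the empty truncation), so Corollary~\ref{measureZeroBoundary: C} gives $\vol_{\RR^d}(\partial \Delta) = 0$. Taking for $H$ the truncating halfspace $\{\vv{u} : \iprod{\vv{u}}{\vv{a}} < 1\}$ of $C$, Lemma~\ref{volume-zero} then yields: $\vol_R(\I) > 0$ if and only if there exists $\alpha > 0$ with $\Delta \cap \alpha H = \emptyset$.

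Next I would unwind the condition $\Delta \cap \alpha H = \emptyset$ combinatorially. Because $C$ is pointed by the order, $\iprod{\vv{c}}{\vv{a}} \geq 0$ for all $\vv{c} \in C$ with equality only at the origin, so for $\vv{u} \in \nu(I_q)$ the set $\tfrac1q\vv{u} + C$ meets $\alpha H$ if and only if $\iprod{\vv{u}}{\vv{a}} < \alpha q$. Hence $\Delta \cap \alpha H = \emptyset$ is equivalent to: $\iprod{\nu(f)}{\vv{a}} \geq \alpha q$ for every $q$ and every nonzero $f \in I_q$. Writing $v(f) := \iprod{\nu(f)}{\vv{a}}$ for the induced $\RR$-valued valuation, the conclusion so far is that $\vol_R(\I) > 0$ if and only if there is a fixed $\alpha > 0$ with $v(f) \geq \alpha q$ for all powers $q$ and all nonzero $f \in I_q$.

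It then remains to show that this ``linear growth'' condition is equivalent to the stated Frobenius containment. Here I would use the elementary inclusions $\m^{nq} \subseteq \m^{[q]} \subseteq \m^q$, where $n$ is the number of generators of $\m$ (the first by the pigeonhole principle, the second since $q$-th powers of generators lie in $\m^q$), together with the last clause of Definition~\ref{OKvaluation: D}. Suppose first that $I_{qq_\circ} \subseteq \m^{[q]}$ for all $q$; let $\beta = \min_i v(y_i)$ over a generating set $y_1, \dots, y_n$ of $\m$, which is positive since $\nu$ dominates $R$, and note $v(g) \geq \beta b$ for all nonzero $g \in \m^b$. For a power $q' \geq q_\circ$ write $q' = q q_\circ$; then $I_{q'} \subseteq \m^{[q]} \subseteq \m^q$ forces $v(f) \geq \beta q = (\beta/q_\circ)\,q'$ for nonzero $f \in I_{q'}$, while for the finitely many $q' < q_\circ$ one has $I_{q'} \subseteq \m$ and hence $v(f) \geq \beta \geq (\beta/q_\circ)\,q'$; so $\alpha = \beta/q_\circ$ works and $\vol_R(\I) > 0$. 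Conversely, suppose $v(f) \geq \alpha q$ for all $q$ and all nonzero $f \in I_q$. Choose a vector $\vv{v} \in S$ satisfying the last condition of Definition~\ref{OKvaluation: D} (any $\ZZ^d$-vector with that property is dominated in the order by such an element of $S$, since $S - S = \ZZ^d$, and necessarily $\iprod{\vv{v}}{\vv{a}} > 0$, as $\vv{v} \neq \vv{0}$), and pick a power $q_\circ$ of $p$ with $q_\circ \geq n\iprod{\vv{v}}{\vv{a}}/\alpha$. Then for any power $q$ and nonzero $f \in I_{qq_\circ}$ we get $\iprod{\nu(f)}{\vv{a}} = v(f) \geq \alpha q q_\circ \geq nq\iprod{\vv{v}}{\vv{a}} = \iprod{nq\vv{v}}{\vv{a}}$, so $\nu(f) \geq nq\vv{v}$ in the order, whence $f \in R \cap \FF_{\geq nq\vv{v}} \subseteq \m^{nq} \subseteq \m^{[q]}$; thus $I_{qq_\circ} \subseteq \m^{[q]}$, as needed.

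The conceptual core is the passage through Lemma~\ref{volume-zero}, which converts positivity of the volume into the statement that the $p$-body $\Delta$ stays clear of a neighborhood of the apex of $C$; after that the argument is a routine comparison of filtrations. I expect the only delicate points to be purely bookkeeping: keeping straight the directions of the inequalities relating $\alpha q$, $v(\m^b)$, and $\iprod{nq\vv{v}}{\vv{a}}$, remembering that $\vv{v}$ may be taken in $S$, and absorbing the finitely many small values of $q$ into the choice of constant.
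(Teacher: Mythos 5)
Your proposal is correct and follows essentially the same route as the paper's proof: both reduce via Corollary~\ref{OK limit of m-primary ideals exists: E} to positivity of $\vol_{\RR^d}(C\setminus\Delta)$, apply Lemma~\ref{volume-zero} to convert this into the $p$-body avoiding a neighborhood of the apex of $C$, and then translate back using the distinguished vector $\vv{v}$ from Definition~\ref{OKvaluation: D} together with the pigeonhole inclusion $\m^{nq}\subseteq\m^{[q]}$. The only differences are cosmetic (your explicit ``linear growth'' criterion and your direct bound on $v(f)$ for $f\in I_{qq_\circ}$, versus the paper's use of $I_q^{[q_\circ]}\subseteq I_{qq_\circ}$ to bound $q_\circ\nu(I_q)$), and your explicit check that $\vol_{\RR^d}(\partial\Delta)=0$ is a welcome detail the paper leaves implicit.
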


\begin{proof}  Fix a $\ZZ$-linear embedding $\ZZ^d \longhookrightarrow \RR$ induced by a vector $\vv{a} \in \RR^d$, and an OK valuation $\nu: \FF \longonto \ZZ^d$ on the fraction field $\FF$ of $R$ that is OK relative to $R$.  Let $C$ be the cone generated by $\nu(R)$, and let $\Delta$ be $p$-body associated to the $p$-system $\nu(\I)$ in $\nu(R)$.

Observe that if $\vv{w}$ is the least $\nu$-value among the values of some fixed finite generating set for $\m$, then every nonzero element of $\m^{q}$ has value at least $q \vv{w}$.  Thus, if $q_{\circ}$ is such that $I_{qq_{\circ}} \subseteq \m^{[q]}$ for all $q$, then 
\[ q_{\circ} \cdot \nu(I_q) \subseteq \nu(I_{qq_{\circ}}) \subseteq q H, \] 
where $H$ is the halfspace of all points $\vv{u}$ in $\RR^d$ with $\iprod{\vv{u}}{\vv{a}} \geq \iprod{\vv{w}}{\vv{a}}$.   In particular, $(1/q) \cdot \nu(I_q)$, and hence the full $p$-body $\Delta$, lies in $(1/q_{\circ})H$. 
It then follows from Lemma \ref{volume-zero} that $\vol_{\RR^d}(C \setminus \Delta)$ is positive, and Corollary~\ref{OK limit of m-primary ideals exists: E} allows us to conclude that the same is true for $\vol_R(\I)$.

Next, instead suppose that $\vol_R(\I)$, and therefore $\vol_{\RR^d}( C \setminus \Delta)$, is positive, and fix a distinguished point $\vv{v} \in S$ as in Definition \ref{OKvaluation: D}.  If $\mu$ is the minimal number of generators of $\m$, and $\mathscr{H}$ is the halfspace of points $\vv{u}$ in $\RR^d$ with $\iprod{\vv{u}}{\vv{a}} \geq \iprod{\vv{v}}{\vv{a}}$, then Lemma \ref{volume-zero} tells us that for all $q_{\circ}$ large enough, the $p$-body $\Delta$ is contained in $(\mu/q_{\circ}) \cdot \mathscr{H}$.  In particular, $\nu(I_{qq_{\circ}})$ lies in $q\mu \cdot \mathscr{H}$, and our choice of $\vv{v}$ then implies that $I_{qq_{\circ}}  \subseteq R \cap \FF_{ \geq \mu q \vv{v}} \subseteq \m^{\mu q} \subseteq \m^{[q]}$ for all~$q$.
\end{proof}

We record some extensions of Theorem \ref{vanishing: T} below.

\begin{remark}[Positivity of volumes, in general]  Adopt the context of Theorem \ref{vanishing: T}, but rather than assuming that $R$ is an OK local domain, instead suppose that $R$ is reduced and complete.  In this case, Proposition \ref{volume-as-sum: P} tells us that $\vol_R(\I)$ is positive if and only if there exists a minimal prime $\idealp$ of ${R}$ with $\dim(R/\idealp) = \dim(R)$, and such that the volume of $\I$ extended to $R/\idealp$ is positive.  One may then apply Theorem \ref{vanishing: T} to  see that this holds if and only if \[I_{qq_{\circ}}  \subseteq \m^{[q]} + \idealp\] for some $q_{\circ}$ and all $q$ a power of $p$.

Similarly, one may extend Theorem \ref{vanishing: T} to the most general setting in which $\vol_R(\I)$ must exist.  Indeed, if we assume only that the $R$-module dimension of the nilradical of $\hat{R}$ is less than $\dim(R)$, then as in the proof of Theorem \ref{m-primary limits exist: T}, if $A$ is the quotient of $\hat{R}$ by its nilradical, then $\vol_R(\I) = \vol_{\hat{R}}( \I \hat{R}) = \vol_{A}( \I A)$, and the positivity of the later volume was considered above.
\end{remark}

\begin{remark} The recent article \cite{PolstraTucker16} is concerned with sequences of ideals $\I = \{ I_{p^e} \}_{e=1}^{\infty}$ of a local ring $R$ of characteristic $p>0$ with the property that 
\begin{equation}
\tag{$\diamond$}
\label{antipfamily} 
 \exists \text{ an $R$-linear } \phi: F_{\ast} R \to R \text{ for which } \phi(I_{pq})\subseteq I_q \text{ for all $q$}.
 \end{equation}   

One may think of this condition as a dual condition to that of forming a $p$-family, in the sense that it guarantees that subsequent ideals in the sequence are \emph{small} in comparison to previous terms, while the condition of forming a $p$-family guarantees that subsequent ideals in the sequence are  \emph{large} in comparison to previous terms. 

Given a sequence of ideals $\I$ in a complete $F$-finite local domain $(R, \m)$ satisfying condition \eqref{antipfamily}, and such that $\m^{[q]} \subseteq I_q$ for all $q$, it is shown in \cite[Theorem 5.5]{PolstraTucker16} that $\vol_R(\I) = 0$ if and only if $\cap_q I_q = 0$.  Moreover, there exists a nonzero element $c \in R$ such that either of these conditions are equivalent to the condition that there exists $q_{\circ}$ a power of $p$ such that $I_{qq_{\circ}} \subseteq (\m^{[q]}: c)$ for all $q$ a power of $p$.  Contrast this with the situation for $p$-bodies, in which it is possible to have $p$-family $\I$ of  $\m$-primary ideals in $R$ with $\cap_{q} I_q = 0$ but such that $\vol_R(\I)$ is positive.  Indeed, the $p$-family given by $I_q= \langle x^{\lfloor \sqrt{q} \rfloor} \rangle$ in $\kk[x]_{\langle x \rangle}$ is one such example.

%
%
\end{remark}

We conclude this section with the Brunn-Minkowski type inequality mentioned in the introduction.

\begin{theorem}\label{BMineq}
Let $I_\bullet$ and $J_\bullet$ be $\idealm$-primary $p$-families in a $d$-dimensional local OK domain $(R,\idealm)$ of characteristic $p>0$. If $L_{\bullet}$ is the $p$-family whose $q$-th term is the product of  $I_q$ and $J_q$, then  
\[ \vol_R(\I)^{1/d}+ \vol_R(J_{\bullet})^{1/d} \geq \vol_R(L_{\bullet})^{1/d}.\]
\end{theorem}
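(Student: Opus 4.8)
The plan is to fix a single OK valuation for $R$, push all three volumes into Euclidean space using Corollary~\ref{OK limit of m-primary ideals exists: E}, and then quote the convex Brunn--Minkowski inequality of Kaveh and Khovanskii (Theorem~\ref{brunn-minkowski-convex}). Concretely, first I would fix a $\ZZ$-linear embedding $\ZZ^d \hookrightarrow \RR$ and a valuation $\nu$ on the fraction field of $R$ that is OK relative to $R$, and write $S = \nu(R)$, $C = \cone(S)$, and $\Delta_I = \Delta(S,\nu(\I))$, $\Delta_J = \Delta(S,\nu(\J))$, $\Delta_L = \Delta(S,\nu(L_{\bullet}))$ for the corresponding $p$-bodies. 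One checks that $L_{\bullet}$ is again an $\m$-primary $p$-family: it is a $p$-family because $(I_qJ_q)^{[p]} = I_q^{[p]}J_q^{[p]} \subseteq I_{pq}J_{pq}$, and it is $\m$-primary because Remark~\ref{uniform multiplier: R} gives constants $c,c'$ with $\m^{cq}\subseteq I_q$ and $\m^{c'q}\subseteq J_q$, so $\m^{(c+c')q}\subseteq I_qJ_q$. Hence Corollary~\ref{OK limit of m-primary ideals exists: E} applies to $\I$, $\J$, and $L_{\bullet}$ simultaneously, yielding $\vol_R(\I) = [\kk_V:\kk]\cdot\vol_{\RR^d}(C\setminus\Delta_I)$ and the analogous formulas for $\J$ and $L_{\bullet}$, all with the same constant $[\kk_V:\kk]$.

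Next I would record the relation between the three $p$-bodies. Since $R$ is a domain, for nonzero $a\in I_q$ and $b\in J_q$ one has $\nu(ab)=\nu(a)+\nu(b)$, so $\nu(I_q)+\nu(J_q)\subseteq\nu(I_qJ_q)=\nu(L_q)$ as ideals of $S$. Combining this with Example~\ref{sum of p-systems: E}, which gives $\Delta_q(S,\nu(\I)+\nu(\J))=\Delta_q(S,\nu(\I))+\Delta_q(S,\nu(\J))$, and with the elementary monotonicity $\Delta_q(S,\nu(\I))\subseteq\Delta_{pq}(S,\nu(\I))$ (and likewise for $\J$) of the truncations of a $p$-body, the ascending union commutes with the Minkowski sum and I obtain
\[ \Delta_I+\Delta_J = \bigcup_q\bigl(\Delta_q(S,\nu(\I))+\Delta_q(S,\nu(\J))\bigr)\subseteq\bigcup_q\Delta_q(S,\nu(L_{\bullet}))=\Delta_L, \]
whence $\vol_{\RR^d}(C\setminus\Delta_L)\le\vol_{\RR^d}\bigl(C\setminus(\Delta_I+\Delta_J)\bigr)$.

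Finally I would verify that Theorem~\ref{brunn-minkowski-convex} applies to $\Delta_I$ and $\Delta_J$. The cone $C$ is full-dimensional and pointed because $S$ is standard; each $p$-body $\Delta_I,\Delta_J$ is a $\cone(S)$-convex subset of $C$ directly from Definition~\ref{pBody: D}; and the complements $C\setminus\Delta_I$, $C\setminus\Delta_J$ are \emph{bounded}, which is the one point that needs care: applying the ``moreover'' clause of Theorem~\ref{OK limit existence: T} to the pair consisting of $\I$ and the constant $p$-family with all terms equal to $R$ (whose $p$-body is $C$), one learns that $\Delta_I$ and $C$ agree outside some truncation of $C$, so $C\setminus\Delta_I$ lies in that truncation and is bounded; the same argument handles $C\setminus\Delta_J$. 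Theorem~\ref{brunn-minkowski-convex} then gives
\[ \vol_{\RR^d}(C\setminus\Delta_I)^{1/d}+\vol_{\RR^d}(C\setminus\Delta_J)^{1/d}\ \ge\ \vol_{\RR^d}\bigl(C\setminus(\Delta_I+\Delta_J)\bigr)^{1/d}\ \ge\ \vol_{\RR^d}(C\setminus\Delta_L)^{1/d}, \]
and multiplying through by $[\kk_V:\kk]^{1/d}$ and substituting the volume formulas from Corollary~\ref{OK limit of m-primary ideals exists: E} yields exactly $\vol_R(\I)^{1/d}+\vol_R(\J)^{1/d}\ge\vol_R(L_{\bullet})^{1/d}$. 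The main (and essentially the only) obstacle is this boundedness verification; the rest is formal manipulation of the machinery already set up in Sections~\ref{semigroups-p-stuff} and~\ref{limits-as-volumes}.
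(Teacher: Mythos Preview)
Your proposal is correct and follows essentially the same approach as the paper's own proof: fix an OK valuation, observe $\nu(I_q)+\nu(J_q)\subseteq\nu(L_q)$ so that $\Delta_I+\Delta_J\subseteq\Delta_L$, apply Theorem~\ref{brunn-minkowski-convex} to the $C$-convex $p$-bodies, and translate back via the volume formula. You are in fact more careful than the paper in explicitly verifying that $L_\bullet$ is an $\m$-primary $p$-family, that the ascending union commutes with Minkowski sum via monotonicity of the $\Delta_q$, and that the complements $C\setminus\Delta_I$, $C\setminus\Delta_J$ are bounded (a hypothesis of Theorem~\ref{brunn-minkowski-convex} that the paper leaves implicit).
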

\begin{proof} Let $\nu$ be a valuation that is OK relative to $R$, and let $C$ be the cone in $\RR^d$ generated by $\nu(R)$.  Observe that $\nu(I_q) + \nu(J_q) \subseteq \nu(L_q)$ for every $q$.    Thus, if $\Delta', \Delta''$, and $\Delta$ are the $p$-bodies associated to the $p$-systems $\nu(I_{\bullet}), \nu( J_{\bullet})$, and $\nu(L_{\bullet})$ in $\nu(R)$, respectively, then  $\Delta' + \Delta '' \subseteq \Delta$.  

Therefore, 
\begin{align*}
 \vol_{\RR^d}( C \setminus \Delta)^{1/d} &\leq \vol_{\RR^d}( C \setminus ( \Delta' + \Delta''))^{1/d} \\ &\leq \vol_{\RR^d}( C \setminus \Delta')^{1/d} +  \vol_{\RR^d}( C \setminus \Delta'')^{1/d}
 \end{align*}
where the second inequality follows from Theorem~\ref{brunn-minkowski-convex} and the fact that the $p$-bodies under consideration are $C$-convex. The theorem then follows from Theorem~\ref{OK limit existence: T}.
\end{proof}

\section{Applications and examples}\label{applications}
\newcommand{\sat}{\operatorname{sat}}

We record some applications of the results in the previous sections. 

\subsection{Hilbert-Kunz multiplicity}   The following, originally due to Monsky \cite{Monsky83}, is a corollary of Corollary~\ref{OK limit of m-primary ideals exists: E}.

\begin{corollary}[Monsky] Let $(R,\idealm)$ be a complete local domain of prime characteristic, and $I$ and $\idealm$-primary ideal. Then $e_{HK}(I)$ exists.
\end{corollary}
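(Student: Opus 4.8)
The plan is to recognize $e_{HK}(I)$ as the volume of an explicit $p$-family of $\idealm$-primary ideals and then invoke the existence results of Section~\ref{limits-as-volumes}. First I would observe that the sequence $I_{\bullet} = \{ I^{[q]} \}_{q=1}^{\infty}$ of Frobenius powers of $I$ is a $p$-family: since $(I^{[q]})^{[p]} = I^{[pq]}$, the defining containment holds (in fact with equality). As $I$ is $\idealm$-primary, each $I^{[q]}$ is $\idealm$-primary as well (e.g. by Remark~\ref{uniform multiplier: R}). With $d = \dim R$, the definition of volume together with \eqref{HK} gives
\[ \vol_R(I_{\bullet}) = \lim_{q \to \infty} \frac{\ell_R(R/I^{[q]})}{q^d} = e_{HK}(I), \]
so it suffices to prove that $\vol_R(I_{\bullet})$ exists.

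Next I would verify that $R$ satisfies the hypotheses needed to apply Corollary~\ref{OK limit of m-primary ideals exists: E}. Since $R$ has prime characteristic $p$, it contains the prime field $\FF_p$; being a complete local domain containing a field, $R$ is OK by Corollary~\ref{complete local domain is OK: C}. Applying Corollary~\ref{OK limit of m-primary ideals exists: E} to the $p$-family $I_{\bullet}$ then shows that $\vol_R(I_{\bullet}) = [\kk_V : \kk] \cdot \vol_{\RR^d}(C \setminus \Delta)$ exists and is finite, where $\nu$ is any valuation OK relative to $R$, $C = \cone(\nu(R))$, and $\Delta = \Delta(\nu(R), \nu(I_{\bullet}))$. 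In view of the identification above, $e_{HK}(I)$ exists. (Alternatively, since $R$ is a domain its nilradical, and hence that of $\hat{R} = R$, is zero, so its $R$-module dimension is less than $d$, and Theorem~\ref{m-primary limits exist: T} applies directly.)

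There is essentially no technical obstacle here: the content lies entirely in the machinery already developed, and the only point requiring care is the verification that $R$ is OK. This is where the hypothesis of prime characteristic (rather than merely mixed or equal characteristic) is used, since it forces $R$ to contain a field and thus places $R$ within the scope of Corollary~\ref{complete local domain is OK: C}. One could replace the completeness hypothesis by excellence of $R$ and the argument would go through unchanged, again via Corollary~\ref{complete local domain is OK: C}.
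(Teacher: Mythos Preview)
Your proposal is correct and matches the paper's approach exactly: the paper simply records this as a corollary of Corollary~\ref{OK limit of m-primary ideals exists: E}, and you have supplied precisely the details needed to see why (the Frobenius powers form an $\idealm$-primary $p$-family, and a complete local domain of prime characteristic is OK by Corollary~\ref{complete local domain is OK: C}). There is nothing to add.
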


We note that Monsky's result is more general, in that it holds in an arbitrary local ring of prime characteristic.  However, his argument first reduces to the case of a complete local domain.  

The remainder of the present subsection is dedicated to establishing a new case of the existence of the generalized Hilbert-Kunz multiplicity.   Recall that the generalized Hilbert-Kunz multiplicity of an ideal $I$ of a $d$-dimensional local ring $(R, \m)$ is defined as \[  e_{gHK}(I) = \lim_{q \to \infty} \frac{\ell ( H^0_{\m}(R/I^{[q]}))}{q^{d}}.\]  

We also note that $H^0_{\m}(R/I^{[q]}) = (I^{[q]})^{\operatorname{sat}} / I^{[q]}$, where  $\idealb^{\operatorname{sat}} = (\idealb : \m^{\infty})$ denotes the saturation of an ideal $\idealb$ of $R$.


\begin{proposition}\label{gHK} Let $(R,\idealm)$ be either an analytically irreducible local ring, or a graded domain of characteristic $p>0$, and let $I$ be an ideal of $R$ that is homogeneous in the graded case.
If there exists a positive integer $c$ such that
 \begin{equation}\tag{$\dagger$}\label{eq-intersection-condition}  \idealm^{cq} \cap (I^{[q]})^{\mathrm{sat}}  =  \idealm^{cq} \cap I^{[q]} 
 \end{equation}
  for all $q$ a power of $p$, then $e_{gHK}(I)$ exists.
\end{proposition}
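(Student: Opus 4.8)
The plan is to recognize the two ideal sequences governing the generalized Hilbert-Kunz function as $p$-families to which Theorem~\ref{OK limit existence: T} applies, after passing to a suitable OK domain. Recall that $H^0_\m(R/I^{[q]}) = (I^{[q]})^{\mathrm{sat}}/I^{[q]}$, so $e_{gHK}(I) = \lim_{q\to\infty} \ell_R\bigl((I^{[q]})^{\mathrm{sat}}/I^{[q]}\bigr)/q^{d}$. Set $\I = \{ I^{[q]} \}_{q}$ and $\J = \{ (I^{[q]})^{\mathrm{sat}} \}_{q}$. Then $\I$ is a $p$-family since $(I^{[q]})^{[p]} = I^{[pq]}$, and $\J$ is a $p$-family because the termwise saturation of a $p$-family is again a $p$-family: if $x\m^{n}\subseteq I^{[q]}$ then $x^{p}(\m^{n})^{[p]}\subseteq I^{[pq]}$, and $(\m^{n})^{[p]}$ is $\m$-primary, whence $x^{p}\in (I^{[pq]})^{\mathrm{sat}}$. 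Moreover $I^{[q]}\subseteq (I^{[q]})^{\mathrm{sat}}$, and the hypothesis~\eqref{eq-intersection-condition} is exactly the condition $\m^{cq}\cap I^{[q]} = \m^{cq}\cap (I^{[q]})^{\mathrm{sat}}$ for all $q$. Thus it suffices to show that $\lim_{q\to\infty}\ell\bigl((I^{[q]})^{\mathrm{sat}}/I^{[q]}\bigr)/q^{d}$ exists, and by Theorem~\ref{OK limit existence: T}, applied to the pair $(\I,\J)$ whose hypotheses we have just checked, this holds once we are working inside a $d$-dimensional OK local domain in which these three properties persist.

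First I would treat the analytically irreducible case by completing: although $R$ itself need not be OK, the completion $\hat R$ is a complete local domain of dimension $d$ containing a field, hence OK by Corollary~\ref{complete local domain is OK: C}, so fixing an OK valuation on its fraction field puts us in the context of Setup~\ref{OK: SU}. Since $H^{0}_{\m}(R/I^{[q]})$ has finite length, its length is unchanged by completion; and the three properties above survive passage to $\hat R$ because Frobenius powers, the saturation operation --- equivalently $H^{0}_{\m}(-)$, which commutes with the flat map $R\to\hat R$ and therefore yields $(I^{[q]}\hat R)^{\mathrm{sat}} = (I^{[q]})^{\mathrm{sat}}\hat R$ --- and finite intersections of submodules all commute with the flat extension $R\to\hat R$. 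Theorem~\ref{OK limit existence: T} applied to $\hat R$ with $\I\hat R$ and $\J\hat R$ then finishes this case. For the graded case I would instead localize at the homogeneous maximal ideal $\m$; a Noetherian graded domain with $R_{0}$ a field is a finitely generated $R_{0}$-algebra, so $R_{\m}$ is an excellent local domain of dimension $d$ containing a field, hence OK by Corollary~\ref{complete local domain is OK: C}. Here $H^{0}_{\m}(R/I^{[q]})$ is a finite-length module supported only at $\m$, so its length is the same over $R$ and over $R_{\m}$; Frobenius powers, saturation, and intersections commute with localization; and the intersection hypothesis descends. Applying Theorem~\ref{OK limit existence: T} to $R_{\m}$ concludes the argument.

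This is essentially a bookkeeping reduction, so there is no genuinely difficult step; the two points that need care are the verification that $\{(I^{[q]})^{\mathrm{sat}}\}_{q}$ really is a $p$-family, and --- more importantly --- that the intersection condition~\eqref{eq-intersection-condition} is stable under the completion or localization used to reach an OK domain. Once these are in hand, the existence of $e_{gHK}(I)$ follows at once from Theorem~\ref{OK limit existence: T}.
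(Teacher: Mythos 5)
Your proposal is correct and follows essentially the same route as the paper: identify $\{I^{[q]}\}$ and $\{(I^{[q]})^{\mathrm{sat}}\}$ as nested $p$-families satisfying the uniform intersection hypothesis, then apply Theorem~\ref{OK limit existence: T} after passing to the completion (analytically irreducible case) or to $R_{\idealm}$ (graded case), noting that the relevant lengths and the condition \eqref{eq-intersection-condition} are unaffected. The extra verifications you supply (that termwise saturation gives a $p$-family, and that the data persist under the flat maps) are exactly the details the paper leaves implicit, the first being its earlier example on saturations of $p$-families.
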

\begin{proof} In the local case this follows immediately from Theorem~\ref{OK limit existence: T}, since completion does not affect the relevant lengths. In the graded case, one may reduce to the local case by localizing at $\idealm$, which again does not affect any relevant lengths.
\end{proof}

\begin{remark}   In his proof of the existence of epsilon multiplicity, Cutkosky made use of \cite[Theorem 3.4]{Swanson}
to show that for every ideal $I$ in a Noetherian local ring $(R, \m)$, there exists a positive integer $c$ such that \[ \m^{cn} \cap (I^n)^{\sat} = \m^{cq} \cap I^n\] for every positive integer $n$.  

In the same way, a version of Swanson's uniformity result \cite[Theorem 3.4]{Swanson} for the Frobenius powers of an ideal $I$ would imply that that ideal satisfies the condition \eqref{eq-intersection-condition} in Proposition \ref{gHK}.   However, this appears to be a subtle and complicated matter (but see \cite{SS} for some positive results). 
\end{remark}

\begin{remark}
The condition \eqref{eq-intersection-condition} on $I$ in Proposition \ref{gHK} is related to a notion of interest in tight closure theory. Namely, recall that an ideal $I$ is said to have the property (LC) if there exists a positive integer $c$ such that
\[ \idealm^{c q} (I^{[q]})^{\mathrm{sat}} \subseteq I^{[q]}\]
 for all $q$. Clearly, the condition  (\ref{eq-intersection-condition}) in Proposition \ref{gHK} implies (LC).  
\end{remark}

\begin{remark}
Vraciu has recently shown in \cite{Vraciu16} that if \emph{every} ideal of $R$ satisfies the condition (LC), then $e_{gHK}(I)$ exists for every ideal $I$ of $R$.  According to Proposition \ref{gHK}, it is only necessary for the given ideal $I$ to satisfy (\ref{eq-intersection-condition}) in order that $e_{gHK}(I)$ exist.
 \end{remark}

We stress that it is not well-understood when a given ideal satisfies the condition (LC), and the same is true for the condition  (\ref{eq-intersection-condition}) in Proposition~\ref{gHK}.  However, as we see in the proof of Corollary \ref{gHK: C} below, the latter condition does hold in the most general setting under which (LC) is known to hold.  In order to establish this, we need the following lemma, which was shown to us by Ilya Smirnov.

\begin{lemma}\label{LC-and-intersection-dim1} Let $(R,\idealm)$ be either a local or graded ring of characteristic $p>0$, and let $I$ be an ideal, homogeneous in the graded case, such that $\dim(R/I)=1$.  If there exists a positive integer $c$ such that
\[ \idealm^{c q} (I^{[q]})^{\mathrm{sat}} \subseteq I^{[q]}\]  for all $q$ a power of $p$, then there exists a positive integer $b$ such that \[ \idealm^{b q} \cap (I^{[q]})^{\mathrm{sat}} = \idealm^{b q} \cap I^{[q]}\] for all $q$ a power of $p$.
\end{lemma}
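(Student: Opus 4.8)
The plan is to exploit the condition $\dim(R/I)=1$ so that the issue is concentrated at the finitely many maximal-dimensional primes of $R/I$, together with the maximal ideal itself, and to compare the saturation of $I^{[q]}$ with $I^{[q]}$ modulo a high power of $\idealm$. First I would fix notation: let $J_q = I^{[q]}$ and $J_q^{\sat} = (I^{[q]})^{\sat} = (J_q : \idealm^{\infty})$. Since $\dim(R/I) = 1$, the ideal $I^{[q]}$ also has $\dim(R/I^{[q]}) = 1$, and its saturation satisfies $J_q^{\sat}/J_q = H^0_{\idealm}(R/J_q)$, which is a module of finite length. The given hypothesis says $\idealm^{cq} J_q^{\sat} \subseteq J_q$ for all $q$. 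The goal is an \emph{intersection} statement: $\idealm^{bq} \cap J_q^{\sat} = \idealm^{bq} \cap J_q$ for some uniform $b$ and all $q$.

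The key observation is that the inclusion ``$\supseteq$'' is trivial (as $J_q \subseteq J_q^{\sat}$), so only ``$\subseteq$'' needs proof: I must show that every element of $\idealm^{bq} \cap J_q^{\sat}$ already lies in $J_q$. The natural approach is an Artin–Rees / Briançon–Skoda style argument applied uniformly in $q$. Concretely, pick an element $x \in \idealm^{bq} \cap J_q^{\sat}$. I want to write $x \in J_q$. The hypothesis gives $\idealm^{cq} x \subseteq \idealm^{cq} J_q^{\sat} \subseteq J_q$, so $x \in (J_q : \idealm^{cq})$; combined with $x \in \idealm^{bq}$, one wants that for $b$ chosen appropriately in terms of $c$ (and the multiplicity/degree data of $R$ and $I$), membership in $\idealm^{bq}$ forces $x$ into $J_q$. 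Here is where I would use that $J_q^{\sat}/J_q$, being killed by $\idealm^{cq}$ and supported only at $\idealm$, is ``small'': I would try to show that the image of $\idealm^{bq} \cap J_q^{\sat}$ in $J_q^{\sat}/J_q$ is zero by a colength count. Since $J_q^{\sat}/J_q$ has length bounded by $O(q^{d-1})$ (it is $H^0_\idealm(R/I^{[q]})$, and by Lemma~\ref{basicBound: L} applied to the finite-length module $(I^{[q]})^{\sat}/I^{[q]} \hookrightarrow R/\idealm^{cq}$, say, this length is $\le \alpha q^{d-1}$) and is annihilated by $\idealm^{cq}$, while $\idealm^{bq}$ has been taken deep, one shows that $\idealm^{bq}$ must annihilate the generators needed.

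The cleaner route, which I expect to be the actual argument, goes through the associated graded / Rees-algebra uniformity: the condition $\dim(R/I) = 1$ means $R/I$ has dimension one, so the powers of $\idealm$ in $R/I$ eventually agree with a linear polynomial, and more to the point, there is a uniform Artin–Rees number for the family $\{I^{[q]}\}$ because $I^{[q]} \supseteq \idealm^{[q]} I^{(q/\text{something})}$-type containments together with the one-dimensionality let one bound how $\idealm$-adic filtrations interact with the $I^{[q]}$-adic data. I would set it up as: choose $b$ so that $b \ge$ (a constant depending on $c$, the number of generators of $I$, and the degree of the Hilbert polynomial of $R$), and then for $x \in \idealm^{bq} \cap J_q^{\sat}$, use the hypothesis to deduce $\idealm^{cq} x \subseteq J_q$ and then a prime-avoidance / length argument at the one-dimensional locus $V(I)$ to clear the $\idealm^{cq}$ factor and conclude $x \in J_q$. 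The main obstacle, and the step I'd spend the most care on, is obtaining the uniformity in $q$ of whatever Artin–Rees-type constant is used — i.e., making sure that the single integer $b$ works simultaneously for all Frobenius powers $I^{[q]}$ rather than depending on $q$; this is exactly the subtlety flagged in the remark after Proposition~\ref{gHK}, and the reason $\dim(R/I)=1$ is essential is that in dimension one the relevant length bounds are linear in $q$ and the structure of $J_q^{\sat}/J_q$ as a finite-length module supported at $\idealm$ is rigid enough to force this uniformity.
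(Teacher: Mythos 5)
There is a genuine gap: neither branch of your plan actually produces the containment $\idealm^{bq} \cap (I^{[q]})^{\mathrm{sat}} \subseteq I^{[q]}$. The colength-counting idea cannot work as stated, because a length bound on $(I^{[q]})^{\mathrm{sat}}/I^{[q]}$ (even a sharp one) gives no control over how deep in the $\idealm$-adic filtration elements of $(I^{[q]})^{\mathrm{sat}} \setminus I^{[q]}$ may lie; the assertion that ``$\idealm^{bq}$ must annihilate the generators needed'' is exactly the statement to be proved, restated. Your second route openly defers the key step (``clear the $\idealm^{cq}$ factor'') to an unspecified uniform Artin--Rees property of the family $\{I^{[q]}\}$ --- but such Frobenius-uniform Artin--Rees/linear-growth statements are precisely the subtle open issue flagged in the remark following Proposition~\ref{gHK}, so invoking one without a mechanism is circular. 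What is missing is a concrete way to use $\dim(R/I)=1$ beyond ``lengths grow like $q^{d-1}$.''

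The paper's proof uses the dimension-one hypothesis in a different and entirely elementary way: choose a single element $x$ that is a parameter modulo $I$, so that $I^{[q]}+(x^q)$ is $\idealm$-primary and $(I^{[q]}+(x^q))^{h} \subseteq \idealm^{qh} \subseteq \idealm^{cq}$ for $h \geq c$. Combined with the (LC) hypothesis this gives the colon description $(I^{[q]})^{\mathrm{sat}} = (I^{[q]} : (I^{[q]}+(x^q))^{\infty}) = (I^{[q]} : x^{hq})$ for all $h \geq c$, from which a three-line computation (write $f = y + x^{cq}z$ with $y \in I^{[q]}$, deduce $x^{2cq}z \in I^{[q]}$, hence $z \in (I^{[q]})^{\mathrm{sat}}$, hence $x^{cq}z \in I^{[q]}$) yields the exact identity $(I^{[q]})^{\mathrm{sat}} \cap (I^{[q]} + (x^{cq})) = I^{[q]}$. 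The uniformity in $q$ then costs nothing: choosing $a$ with $\idealm^{a} \subseteq I + (x^{c})$ and letting $\mu$ be the number of generators of $\idealm^a$, the pigeonhole bound $\idealm^{a\mu q} \subseteq (\idealm^{a})^{[q]} \subseteq I^{[q]} + (x^{cq})$ gives the lemma with $b = a\mu$. In other words, the only inputs uniform in $q$ are the hypothesis itself and the tautological compatibility of Frobenius powers with the fixed ideal $I + (x^c)$; no uniform Artin--Rees theorem and no length estimates are needed, and this is the idea your proposal lacks.
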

\begin{proof}
Fix a positive integer $c$ such that $\idealm^{c q} (I^{[q]})^{\mathrm{sat}} \subseteq I^{[q]}$ for all $q$, and an element $x \in R$ that forms a parameter modulo $I$.  If $h \geq c$ is an integer, then the $\m$-primary ideal $(I^{[q]} + x^q)^{h}$ lies in $\m^{cq}$, and hence, multiplies the saturation of $I^{[q]}$ into $I^{[q]}$.   In particular, if $h \geq c$, then
 \begin{equation}
\label{saturation comp: e}
(I^{[q]})^{\sat} = ( I^{[q]} : ( I^{[q]} + x^q)^{\infty}) = ( I^{[q]} : ( I^{[q]} + x^q)^{h})  = ( I^{[q]} : x^{hq})
 \end{equation}
 
Now, we claim that
\[ (I^{[q]})^{\sat} \cap (I^{[q]} + x^{cq}) = I^{[q]}.\]
It is obvious that $I^{[q]}$ is contained in the intersection above.  For the other containment, write an element in this intersection as $f=y+x^{cq} z$ with $y \in I^{[q]}$.  By \eqref{saturation comp: e}, we know that $x^{cq}f=x^{cq}y+x^{2cq}z$ must lie in $I^{[q]}$, so $x^{2cq}z \in I^{[q]}$.  On the other hand, this and  \eqref{saturation comp: e} implies that $x^{cq} z\in I^{[q]}$, which allows us to conclude that $f\in I^{[q]}$.

Finally, fix $a$ such that $\idealm^a\subseteq I+(x^c)$.  If $\mu$ is the minimal number of generators of $\idealm^a$, then $\idealm^{a\mu q} \subseteq (\idealm^a)^{[q]} \subseteq I^{[q]}+(x^{cq})$, so that 
\[ \idealm^{a \mu q} \cap (I^{[q]})^{\mathrm{sat}} = \idealm^{a \mu q} \cap (I^{[q]})^{\sat} \cap (I^{[q]} + (x^{cq})) = \idealm^{a \mu q} \cap I^{[q]}.\]
Thus the constant $b=a\mu$ satisfies the conclusion of the lemma.
\end{proof}

\begin{corollary} 
\label{gHK: C}
Let $R$ be a graded domain of characteristic $p>0$, and $I$ be a homogeneous ideal with $\dim(R/I)=1$. Then $e_{gHK}(I)$ exists.
\end{corollary}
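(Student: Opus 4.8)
The plan is to verify that $I$ satisfies Huneke's condition (LC) --- that is, that there exists a positive integer $c$ with $\m^{cq}(I^{[q]})^{\sat}\subseteq I^{[q]}$ for every $q$ --- and then to feed this into Lemma~\ref{LC-and-intersection-dim1} and Proposition~\ref{gHK}. Indeed, once (LC) is known, Lemma~\ref{LC-and-intersection-dim1} applies (as $R$ is graded, $I$ is homogeneous, and $\dim(R/I)=1$) and produces a positive integer $b$ with $\m^{bq}\cap (I^{[q]})^{\sat}=\m^{bq}\cap I^{[q]}$ for all $q$; this is exactly hypothesis~\eqref{eq-intersection-condition} of Proposition~\ref{gHK}, which then yields the existence of $e_{gHK}(I)$.

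So the real content is to establish (LC) when $\dim(R/I)=1$, and here is how I would argue it. Since $\dim(R/I)=1$, no minimal prime of $I$ equals $\m$ (else $I$ would be $\m$-primary), so homogeneous prime avoidance produces a homogeneous element $x\in\m$ lying in no minimal prime of $I$; then $\dim\!\big(R/(I+(x))\big)<1$, so $I+(x)$ is $\m$-primary. Fix $a$ with $\m^a\subseteq I+(x)$, let $\mu$ be the minimal number of generators of $\m^a$, and set $c_0=a\mu$; the pigeonhole argument from Remark~\ref{uniform multiplier: R} gives $\m^{c_0 q}\subseteq(\m^a)^{[q]}\subseteq I^{[q]}+(x^q)$ for all $q$. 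Multiplying this containment through and using that $x$ is a nonzerodivisor in the domain $R$, one checks that $(I^{[q]})^{\sat}=(I^{[q]}:x^\infty)$ for every $q$. The crux is to make this saturation \emph{uniform} in $q$: I claim there is an integer $k$, independent of $q$, with $x^k(I^{[q]})^{\sat}\subseteq I^{[q]}$ for all $q$. Granting the claim, $\m^{c_0 q k}\subseteq\big(I^{[q]}+(x^q)\big)^k\subseteq I^{[q]}+(x^k)$, so multiplying by $(I^{[q]})^{\sat}$ and invoking the claim gives $\m^{c_0 q k}(I^{[q]})^{\sat}\subseteq I^{[q]}$, i.e.\ (LC) holds with $c=c_0 k$.

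The main obstacle is therefore the uniform bound $k$, and for this I would invoke the uniform Artin--Rees theorem of Huneke: since $R$ is a finitely generated algebra over a field, there is an integer $k$, depending only on $R$, on $x$, and on a positive integer $r$, such that $\mathfrak{a}\cap(x^n)\subseteq x^{n-k}\mathfrak{a}$ for all $n\ge k$ and all ideals $\mathfrak{a}$ generated by at most $r$ elements. Taking $r$ to be the number of generators of $I$ and applying this with $\mathfrak{a}=I^{[q]}$ (which is generated by $r$ elements): given a homogeneous $f\in(I^{[q]})^{\sat}=(I^{[q]}:x^\infty)$, pick $M\ge k$ with $x^M f\in I^{[q]}$; then $x^M f\in I^{[q]}\cap(x^M)\subseteq x^{M-k}I^{[q]}$, so $x^M f=x^{M-k}g$ with $g\in I^{[q]}$, and cancelling the nonzerodivisor $x^{M-k}$ gives $x^k f=g\in I^{[q]}$. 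This proves the claim, hence (LC), hence the corollary. (Alternatively, one can bypass Lemma~\ref{LC-and-intersection-dim1} and verify~\eqref{eq-intersection-condition} directly: the same uniform $k$ gives $(I^{[q]})^{\sat}\cap\big(I^{[q]}+(x^k)\big)=I^{[q]}$ by an argument like the one in the proof of Lemma~\ref{LC-and-intersection-dim1}, and intersecting $\m^{c_0 q k}\subseteq I^{[q]}+(x^k)$ with $(I^{[q]})^{\sat}$ yields~\eqref{eq-intersection-condition} with $c=c_0 k$.) I expect the only genuinely nontrivial ingredient to be the appeal to uniform Artin--Rees; the rest is bookkeeping with the single parameter $x$.
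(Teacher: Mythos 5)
Your overall reduction is the same as the paper's: establish Huneke's condition (LC) for $I$, then apply Lemma~\ref{LC-and-intersection-dim1} and Proposition~\ref{gHK}. The paper, however, obtains (LC) by citing Vraciu's theorem on local cohomology of Frobenius powers over graded affine algebras, whereas you try to prove it by a formal Artin--Rees argument, and this is where there is a genuine gap. The uniform Artin--Rees theorem of Huneke is uniform in the ideal $J$ whose powers are being intersected, for a \emph{fixed} pair of modules $N\subseteq M$; there is no version that is uniform over all submodules $\mathfrak{a}$ with a bounded number of generators while the principal ideal $(x)$ stays fixed, which is what you invoke. Indeed, already for principal ideals the statement ``$\mathfrak{a}\cap(x^n)\subseteq x^{n-k}\mathfrak{a}$ for all $\mathfrak{a}$ generated by at most $r$ elements'' fails: taking $\mathfrak{a}=(x^s)$ forces $k\geq s$.

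More importantly, the conclusion you extract from it --- a single integer $k$ with $x^k(I^{[q]})^{\sat}\subseteq I^{[q]}$ for all $q$, i.e.\ a fixed power of a fixed element annihilating $H^0_{\m}(R/I^{[q]})$ uniformly in $q$ --- is simply false. Take $R=\mathbb{F}_p[u,v]$ and $I=(u^2,uv)$, so $\dim(R/I)=1$ and any homogeneous parameter modulo $I$ (say $x=v$, or any linear form not in $(u)$) fits your setup. Then $I^{[q]}=u^q(u^q,v^q)$, $(I^{[q]})^{\sat}=(u^q)$, and $(I^{[q]})^{\sat}/I^{[q]}\cong R/(u^q,v^q)$, whose annihilator $(u^q,v^q)\subseteq\m^q$ contains no fixed power of any fixed element once $q$ is large; concretely, $v^k u^q\notin I^{[q]}$ for $q>k$. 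The Artin--Rees containment you want also fails for this very family: $u^qv^n\in I^{[q]}\cap(v^n)$ but $u^qv^n\notin v^{n-k}I^{[q]}$ unless $k\geq q$. So the annihilation of $(I^{[q]})^{\sat}/I^{[q]}$ genuinely requires powers growing linearly in $q$ (this is exactly what (LC) asserts), and producing that growth is the nontrivial content of Vraciu's theorem; it cannot be recovered by bookkeeping with a single parameter $x$. Your alternative route that verifies \eqref{eq-intersection-condition} directly rests on the same false uniform $k$, so it has the same gap; the remainder of your argument (the pigeonhole containment $\m^{c_0q}\subseteq I^{[q]}+(x^q)$, the identification of the saturation with $(I^{[q]}:x^{\infty})$, and the passage through Lemma~\ref{LC-and-intersection-dim1} and Proposition~\ref{gHK}) is fine.
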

\begin{proof} It was shown by Vraciu~\cite{Vraciu00} that such an ideal $I$ satisfies the condition (LC). The result then follows from Lemma~\ref{LC-and-intersection-dim1} and Proposition~\ref{gHK}.
\end{proof}

\subsection{$F$-signature}

Suppose that $(R, \m)$ is a local ring of characteristic $p>0$, and let $\I$ be the sequence whose $p^e$-th term consists of all $z$ in $R$ such that $\phi(z) \in \m$ for every $\phi$ in $\operatorname{Hom}_R(F^e_{\ast} R, R)$.   As in Example \ref{F-sig family: E}, this sequence is a $p$-family,  and it is well-known that the terms in this sequence are all proper and $\m$-primary whenever $R$ is $F$-pure.  In this case, the volume of this $p$-family equals the $F$-signature of $(R, \m)$.  
More generally, if $x$ is a nonzero element of an $F$-pure local ring $(R, \m)$, and $\lambda$ is a positive real parameter, then the limit \[ \lim_{q \to \infty} \frac{ \ell ( R/ (I_q: x^{\lceil{\lambda q \rceil}}) ) }{q^d} \] is called the $F$-signature of the pair $(R, x^{\lambda})$.  

It is known that both versions of $F$-signature equal zero unless the ambient ring is strongly $F$-regular, in which case  it is a domain.  In the case of a local domain, Lemma \ref{F-sig: L} below shows that both these limits always exist.  We note that Corollary \ref{F-sig: C} is originally due to Tucker \cite{Tucker12}, and Corollary \ref{F-sig pairs: C} to Blickle, Schwede, and Tucker \cite{BST}.

\begin{corollary}[Tucker]
\label{F-sig: C}
The $F$-signature of a local domain of prime characteristic exists.
\end{corollary}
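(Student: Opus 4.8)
The plan is to realize the $F$-signature as the volume of a concrete $p$-family and then apply the existence theorem we have already proved for such volumes. Let $(R,\m)$ be a $d$-dimensional local domain of characteristic $p>0$, and let $\I=\{I_q\}_{q=1}^{\infty}$ be the $p$-family of Example~\ref{F-sig family: E} taken with $\idealb=\m$, so that $I_q$ consists of all $z\in R$ with $\phi(z)\in\m$ for every $\phi\in\operatorname{Hom}_R(F^{e}_{\ast}R,R)$, where $q=p^{e}$. By Yao's description of $a_q(R)$ as the length of a cyclic module one has $a_q(R)=\ell_R(R/I_q)$, so proving that the $F$-signature $s(R)=\lim_q a_q(R)/q^{d}$ exists is precisely proving that $\vol_R(\I)=\lim_q \ell_R(R/I_q)/q^{d}$ exists.

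The first step is to dispose of a degenerate case. If $R$ is not $F$-split (i.e.\ the Frobenius $R\to F_{\ast}R$ admits no $R$-linear retraction), then I claim $I_1=R$: if some $\phi(z)$ with $\phi\in\operatorname{Hom}_R(F_{\ast}R,R)$ and $z\in R$ were a unit, then composing $\phi$ with the $R$-linear endomorphism $x\mapsto zx$ of $F_{\ast}R$ and dividing by that unit would produce a retraction of the Frobenius, a contradiction. Since $\I$ is a $p$-family, $I_q\supseteq I_1^{[q]}=R$, so $I_q=R$ for every $q$ and $\vol_R(\I)=0$, which certainly exists. Hence we may assume $R$ is $F$-split, in particular $F$-pure, and then every term $I_q$ is a proper $\m$-primary ideal of $R$.

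It then remains to apply Theorem~\ref{m-primary limits exist: T} to the $p$-family $\I$ of $\m$-primary ideals in the $d$-dimensional local ring $R$; this requires only that the nilradical of $\hat{R}$ have $R$-module dimension less than $d$. Here the key point is that $F$-splitness ascends to the completion: regarding $F_{\ast}R$ as an $R$-module via $r\cdot x=r^{p}x$, one computes $\m^{n}\cdot F_{\ast}R=F_{\ast}\bigl((\m^{[p]})^{n}\bigr)$, and since $\m^{[p]}$ is $\m$-primary the $\m$-adic completion of the $R$-module $F_{\ast}R$ is $F_{\ast}\hat{R}$; applying $\m$-adic completion to an $R$-linear retraction of $R\to F_{\ast}R$ therefore yields an $\hat{R}$-linear retraction of $\hat{R}\to F_{\ast}\hat{R}$. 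Thus $\hat{R}$ is $F$-split, hence reduced, so its nilradical is $(0)$ and the hypothesis of Theorem~\ref{m-primary limits exist: T} holds vacuously; this gives that $\vol_R(\I)$ exists and is finite. (One may instead pass to $\hat{R}$ at the outset---which alters none of the lengths $\ell_R(R/I_q)$---and apply Theorem~\ref{limits-of-pairs-reduced} to the pair of $p$-families $\{I_q\hat{R}\}$ and the constant family $\{\hat{R}\}$, using Corollary~\ref{complete local domain is OK: C} to see that each minimal-prime quotient of the reduced complete local ring $\hat{R}$ is an OK domain.)

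The only genuinely delicate ingredient is the hypothesis check for Theorem~\ref{m-primary limits exist: T}, namely the reducedness of $\hat{R}$, which I would extract from $F$-splitness as above; identifying $s(R)$ with $\vol_R(\I)$, disposing of the non-$F$-pure case, and the $\m$-primariness of the $I_q$ are all routine. The same strategy, run with the colon $p$-family attached to $\I$ as in Example~\ref{parameter family: E} in place of $\I$, also yields the existence of the $F$-signature of a pair.
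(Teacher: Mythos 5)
Your argument is correct, and its skeleton is the paper's: realize the Yao--Tucker ideals $I_q=\{z\in R:\phi(z)\in\m\ \text{for all}\ \phi\in\operatorname{Hom}_R(F^e_\ast R,R)\}$ as a $p$-family of $\m$-primary ideals and feed it into the main existence theorem. Where you genuinely diverge is in the two reductions. The paper disposes of degenerate cases by quoting the external fact that the $F$-signature vanishes unless $R$ is strongly $F$-regular, and then simply invokes Lemma~\ref{F-sig: L}, whose hypothesis (the nilradical of $\hat R$ has dimension less than $d$) is not verified for a bare local domain and is in fact not automatic without excellence or $F$-finiteness; you instead argue both points from scratch: if $R$ is not $F$-split, every $\phi(z)$ is a nonunit, so the first nontrivial term of the family equals $R$ and the whole family is constantly $R$, giving volume $0$; and if $R$ is $F$-split, your computation $\m^n\cdot F_\ast R=F_\ast\bigl((\m^{[p]})^n\bigr)$ shows the $\m$-adic completion of $F_\ast R$ is $F_\ast\hat R$, so a splitting completes to a splitting, $\hat R$ is $F$-pure hence reduced, and Theorem~\ref{m-primary limits exist: T} applies. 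This buys a self-contained verification of the completion hypothesis that the paper leaves implicit (the only shared external input being Yao's identification $a_q(R)=\ell_R(R/I_q)$, which, exactly as in the paper, silently assumes $F$-finiteness and, strictly, a perfect residue field). What the paper's route through Lemma~\ref{F-sig: L} and Corollary~\ref{flexibility F-sig of pairs: C} buys in exchange is the colon-ideal flexibility needed for the pair version, Corollary~\ref{F-sig pairs: C}, which your write-up only gestures at in its last sentence. One cosmetic point: what you call $I_1$ is really the $e=1$ term $I_p$ (the honest $e=0$ term would be $\m$), but the iteration $I_{p^e}\supseteq I_p^{[p^{e-1}]}=R$ goes through verbatim, and in the $F$-split case the $\m$-primariness of the terms comes from $\m^{[q]}\subseteq I_q$, as the paper notes.
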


\begin{corollary} 
\label{F-sig pairs: C}
If $x$ is a nonzero element of a local domain $R$ of prime characteristic, then the $F$-signature of the pair $(R, x^{\lambda})$ exists for every positive real parameter $\lambda$.
\end{corollary}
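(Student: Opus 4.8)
The plan is to reduce the existence of the $F$-signature of the pair $(R, x^{\lambda})$ to the existence of $\vol_R$ of a single $p$-family of $\m$-primary ideals, and then to invoke the corresponding existence statement for local domains, which is Lemma~\ref{F-sig: L} below. With $\I = \{I_q\}$ the $p$-family whose $p^e$-th term is the set of $z \in R$ with $\phi(z) \in \m$ for every $\phi \in \operatorname{Hom}_R(F^e_{\ast} R, R)$ --- a $p$-family by Example~\ref{F-sig family: E} --- the $F$-signature of the pair is $\lim_{q \to \infty} q^{-d}\,\ell\bigl(R/(I_q : x^{\lceil \lambda q\rceil})\bigr)$. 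Since $\phi(m^q y) = m\,\phi(y) \in \m$ for $m \in \m$, we have $\m^{[q]} \subseteq I_q$, hence $\m^{cq} \subseteq I_q$ for a suitable $c$ by Remark~\ref{uniform multiplier: R}; in particular $\I$ is a $p$-family of $\m$-primary ideals.

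Next I would introduce the auxiliary $p$-family supplied by Example~\ref{parameter family: E}: with $f = x$ and $\lambda_q = \lceil q\lambda\rceil - 1$, the ideals $J_q := (I_q : x^{\lambda_q})$ form a $p$-family $\J$, and $\m^{cq} \subseteq I_q \subseteq J_q$. The purpose of this particular shift of exponent is that, writing $L_q := (I_q : x^{\lceil \lambda q\rceil})$, the identity $(I_q : x^{a+1}) = \bigl((I_q : x^{a}) : x\bigr)$ gives $L_q = (J_q : x)$, so $J_q \subseteq L_q \subseteq (J_q : x)$. Because $R$ is a domain, $x$ is a nonzerodivisor, and so Corollary~\ref{flexibility F-sig of pairs: C}, applied to the $p$-family $\J$ and the sequence $L_{\bullet}$, produces a constant $\gamma > 0$ with $0 \le \ell(R/J_q) - \ell(R/L_q) \le \gamma\, q^{d-1}$ for every $q$. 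Dividing by $q^{d}$ shows that $q^{-d}\ell(R/L_q)$ and $q^{-d}\ell(R/J_q)$ have the same limiting behavior; thus the $F$-signature of the pair exists if and only if $\vol_R(\J)$ exists, and the two coincide when they do.

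It remains to establish that $\vol_R(\J)$ exists, and this is the step I expect to carry all the weight --- it is precisely the assertion of Lemma~\ref{F-sig: L}. One cannot simply cite Theorem~\ref{m-primary limits exist: T} at this point: a local domain is reduced, but its completion need not be, and may even be analytically ramified with a nilradical of full dimension $d$, in which case the hypothesis on $\hat R$ in that theorem fails. Overcoming this requires exploiting the particular structure of $F$-signature-type families --- for example the containment $\m^{[q]} \subseteq I_q$, together with the invariance of these colengths under completion and their insensitivity to nilpotents via the free-rank interpretation of $\ell(R/I_q)$ --- so as to reduce to the case of an OK domain and then apply Corollary~\ref{OK limit of m-primary ideals exists: E}. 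Granting Lemma~\ref{F-sig: L}, the corollary follows immediately from the reduction above; Corollary~\ref{F-sig: C} is the degenerate case in which no shift is needed and $\J = \I$, handled identically.
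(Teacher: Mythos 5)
Your reduction is exactly the paper's: the paper deduces this corollary by specializing Lemma~\ref{F-sig: L} to $z=x$, and the proof of that lemma is precisely your argument --- the shifted family $J_q=(I_q:x^{\lceil\lambda q\rceil-1})$ from Example~\ref{parameter family: E}, the sandwich $J_q\subseteq L_q\subseteq (J_q:x)$ with $x$ a nonzerodivisor, Corollary~\ref{flexibility F-sig of pairs: C} to equate the two normalized colengths up to an error of order $q^{d-1}$, and then the volume-existence machinery (Corollary~\ref{OK limit of m-primary ideals exists: E}). The one place you depart from the paper is your final paragraph: Lemma~\ref{F-sig: L} does not contain the ``insensitivity to nilpotents / free-rank'' argument you anticipate; it simply carries the hypothesis that the nilradical of $\hat R$ has $R$-module dimension less than $d$, and the paper's proof of the corollary invokes the lemma for a local domain without further comment. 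Your observation is correct that a non-excellent local domain can be analytically ramified with completed nilradical of full dimension, so ``granting Lemma~\ref{F-sig: L}'' does not literally cover that case --- but this is a caveat about the paper's own statement (which is tacitly in the excellent or $F$-finite setting of Tucker and Blickle--Schwede--Tucker, where $\hat R$ is reduced and the hypothesis holds automatically, so that the OK-domain results apply), not a gap peculiar to your argument. Modulo that shared caveat, your proof is correct and coincides with the paper's; one small simplification is that $\m^{[q]}\subseteq I_q$ already gives $\m^{\mu q}\subseteq I_q$ by pigeonhole without appealing to Remark~\ref{uniform multiplier: R}, which presumes $I_1$ proper.
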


We end this subsection with the lemma referenced above.

\begin{lemma}
\label{F-sig: L}
 Let $(R, \m)$ be a $d$-dimensional local ring of characteristic $p>0$.  Fix a $p$-family of $\m$-primary ideals $\I$ of $R$, a positive real number $\lambda$, and a nonzerodivisor $x$ in $R$.  If the $R$-module dimension of the nilradical of $\hat{R}$ is less than $d$, then the limit
\[ \lim_{q \to \infty}  \frac{ \ell (R / (I_q : x^{\lceil \lambda q \rceil - 1} z ) }{q^d} \] 
exist for every nonzerodivisor $z$ in $R$.   Furthermore, the value of this limit does not depend on the choice of the nonzerodivisor $z$.
\end{lemma}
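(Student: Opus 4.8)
The plan is to reduce the existence of the limit to the existence of the volume of a single auxiliary $p$-family, and then to observe that the extra colon by the nonzerodivisor $z$ perturbs lengths only by a term of lower order in $q$.

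Write $\lambda_q = \lceil \lambda q \rceil - 1$ and set $K_q := (I_q : x^{\lambda_q})$. By Example~\ref{parameter family: E} applied with $f = x$, the sequence $K_{\bullet}$ is a $p$-family. Since $I_q \subseteq K_q$ for all $q$, and since Remark~\ref{uniform multiplier: R} provides a positive integer $c$ with $\m^{cq} \subseteq I_q \subseteq K_q$ for all $q$, each $K_q$ is $\m$-primary. The hypothesis that the $R$-module dimension of the nilradical of $\hat R$ is less than $d$ then allows me to apply Theorem~\ref{m-primary limits exist: T} and conclude that $\vol_R(K_{\bullet}) = \lim_{q\to\infty} \ell_R(R/K_q)/q^d$ exists and is finite.

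Now fix a nonzerodivisor $z$. Using the identity $(I_q : x^{\lambda_q} z) = (K_q : z)$, I would note that the $R$-linear map $R/K_q \xrightarrow{\,\cdot z\,} R/K_q$ has kernel $(K_q:z)/K_q$ and cokernel $R/(K_q + zR)$; as $R/K_q$ has finite length these two modules have equal length, and the exact sequence
\[ 0 \to (K_q : z)/K_q \to R/K_q \to R/(K_q : z) \to 0 \]
then gives
\[ \ell_R\!\left( R/(I_q : x^{\lambda_q} z) \right) = \ell_R(R/K_q) - \ell_R\!\left( R/(K_q + zR) \right). \]
To control the correction term, set $A = R/zR$, a local ring of Krull dimension $d-1$ (this is where $z$ being a nonzerodivisor is used). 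Since $R/(K_q + zR) \cong A/K_q A$ and $\m_A^{cq} \subseteq K_q A$, Lemma~\ref{basicBound: L} yields a constant $\alpha > 0$ with $\ell_R(R/(K_q + zR)) \le \alpha\, q^{d-1}$ for all $q$. Dividing by $q^d$ and letting $q \to \infty$, the correction term drops out, so
\[ \lim_{q\to\infty} \frac{ \ell_R\!\left( R/(I_q : x^{\lceil \lambda q\rceil - 1} z) \right) }{q^d} = \vol_R(K_{\bullet}). \]
This limit exists by the previous paragraph, and its value is $\vol_R(K_{\bullet})$, which is manifestly independent of $z$; this proves both assertions at once.

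The argument is essentially formal once the auxiliary family $K_{\bullet}$ is identified, so there is no serious obstacle. The only points that need care are that Example~\ref{parameter family: E} must be invoked with $f = x$ rather than $f = xz$ (so that $K_{\bullet}$, not the sequence we actually wish to study, is the $p$-family), the elementary identity $(I_q : x^{\lambda_q} z) = ((I_q : x^{\lambda_q}) : z)$, and the fact that the $z$-torsion of $R/K_q$ has length $O(q^{d-1})$ — for which the nonzerodivisor hypothesis on $z$ is exactly what forces $\dim R/zR < d$. One could instead observe that $\{K_q + zR\}$ is a $p$-family of $\m$-primary ideals whose volume is zero and quote Theorem~\ref{m-primary limits exist: T} a second time, but the direct bound via Lemma~\ref{basicBound: L} is shorter.
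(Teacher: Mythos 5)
Your proof is correct and follows essentially the same route as the paper: both introduce the auxiliary $p$-family $J_q=(I_q:x^{\lceil \lambda q\rceil-1})$ via Example~\ref{parameter family: E}, use the existence of its volume, and show that the further colon by the nonzerodivisor $z$ changes lengths only by $O(q^{d-1})$. The only cosmetic difference is that you reprove this last bound inline with the multiplication-by-$z$ exact sequence and Lemma~\ref{basicBound: L}, whereas the paper quotes Corollary~\ref{flexibility F-sig of pairs: C} (whose proof is exactly your computation), and your citation of Theorem~\ref{m-primary limits exist: T} for the auxiliary volume is if anything the more precise reference in this generality.
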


\begin{proof}
By Remark \ref{uniform multiplier: R}, there exists a positive integer $c$ such that $\m^{cq}$ lies in $I_q$ for all $q$ a power of $p$.  For every such $q$, set \[ L_q = (I_q : x^{\lceil \lambda q \rceil - 1} z ).\] 

As in Example \ref{parameter family: E}, the sequence whose $q$-term is \[ J_q = (I_q : x^{\lceil \lambda q \rceil - 1}) \] is a $p$-family in $R$, and by definition, $J_q \subseteq L_q \subseteq (J_q : z)$ for every $q$.  The Lemma then follows from Corollary \ref{flexibility F-sig of pairs: C} and Corollary~\ref{OK limit of m-primary ideals exists: E}.
\end{proof}

\subsection{Explicit computations in the toric case}

Fix a closed full-dimensional pointed cone $C$ in $\RR^d$ generated by a finite subset of $\QQ^d$.   Let $S$ be the semigroup $C \cap \ZZ^d$,  let $\kk$ be a  field of characteristic $p>0$, and let $R$ be the semigroup ring $\kk[S]$.   $D$ will denote the localization of $R$ at $\idealm$, the maximial ideal generated by the monomials with nonzero exponent, and $\FF$ its fraction field.

Consider the valuation  $\nu: \FF^{\times} \longonto \ZZ^d$ that is OK relative to $D$ described in Example \ref{toric ring is OK: E}.  By construction, the semigroup $\nu(D)$ equals $S$, the cone generated by $S$ is the ambient cone $C$, and the extension of residue fields $\kk \hookrightarrow \kk_{V}$ is an isomorphism.  Moreover, if $U$ is a subset of $S$, and $I$ is the ideal of $D$ generated by the monomials whose exponents lie in $U$, then it is straightforward to verify that
\begin{equation}
\label{image of a monomial ideal: e}
\nu ( I ) = U + S. 
\end{equation}

Finally, in this context,  if $\I$ is a $p$-family of $\idealm$-primary ideals in $D$, and $\Delta$ is the $p$-body associated the $p$-system of semigroup ideals $\nu(\I)$ in $S$, then Corollary~\ref{OK limit of m-primary ideals exists: E} tells us that
\begin{equation}
\label{p-body in toric case: e}
\vol_D(\I) = \lim_{q \to \infty}  \frac{\ell ( D/ I_q )}{q^d} = \vol_{\RR^d} ( C \setminus \Delta).
\end{equation} 

\begin{example}[The Hilbert-Kunz multiplicity of a monomial ideal]
Let $\idealb$ be an $\idealm$-primary ideal of $R$ generated by monomials whose exponents belong to some subset $U$ of $S$,  and let $\I$ be the $p$-family of ideals in $D$ whose $q$-th term is the expansion of $\idealb^{[q]}$ to $D$.  It follows from \eqref{image of a monomial ideal: e} that 
$\nu( I_q) = q U + S$, and Example \ref{staircase: E} then tells us that the $p$-body associated to $\nu(\I)$ is $\Delta = U + C$.  In this case, \eqref{p-body in toric case: e} recovers the formula of  Watanabe  \cite{Watanabe99} and Eto \cite{ Eto01},  which states that the  Hilbert-Kunz multiplicity of the expansion of $\idealb$ to the local ring $D$ equals the Euclidean volume of $C \setminus ( U + C)$.  
\end{example} 

\begin{example}[The $F$-signature of a local toric ring] It is well-known that every finitely generated rational cone is rational polyhedral.  In particular, there exist $\vv{v}_1, \dots, \vv{v}_n$ in $\QQ^d$ such that a point $\vv{u}$ in $\RR^d$ lies in $C$ if and only if $\iprod{\vv{u}}{\vv{v}_i}$ is nonnegative for every $\vv{v}_i$.  We may assume that no proper subset of these vectors defines $C$, and after rescaling each vector by a positive rational number, we may also assume that the coordinates of each $\vv{v}_i$ are relatively prime integers.  With these assumptions,  the vectors $\vv{v}_1, \dots, \vv{v}_n$ are uniquely determined by $C$.  

Let $P$ be the polyhedral set consisting of all points $\vv{u}$ in $\RR^d$ such that $0 \leq \iprod{\vv{u}}{\vv{v}_i}  < 1$ for every $\vv{v}_i$, and let $\I$ be the $p$-family ideals in $D$ that define the $F$-signature of $D$.  It is not difficult to verify (see, e.g., \cite[Lemma 6.6(2)]{VonKorff11}) that each $I_q$ is a monomial ideal, and it is shown in Lemma 3.8 and Lemma 3.11 of \cite{VonKorff11} that a monomial $x^{\vv{u}}$ lies in $I_q$ if and only if $\vv{u}$ lies in $ S \setminus qP$.   In light of this, \eqref{image of a monomial ideal: e} tells us that 
$\nu(I_q) =  S \setminus qP  + S$.   Consequently, 
\[ \Delta_q(S, \nu(\I)) = \left( (1/q) \ZZ^d \cap C \right) \setminus P + C,\] from which it follows that the union $\Delta(S, \nu(\I)) = \cup_{q} \, \Delta_q(S, \nu(\I))$ agrees with $C \setminus P$, up to closure.    
In this case, \eqref{p-body in toric case: e} recovers the results of Watanabe and Yoshida \cite{WatanabeYoshida04} and Von Korff \cite{VonKorff11}, which state that the $F$-signature of $D$ equals the Euclidean volume of $C \setminus P$. 
\end{example}

\section*{Acknowedgements}
The authors thank Ilya Smirnov for showing us Lemma~\ref{LC-and-intersection-dim1}, and Thomas Polstra for his questions that inspired us to prove Theorem~\ref{vanishing: T}. We also thank Karl Schwede, Anurag Singh, Karen Smith, Kevin Tucker, and Emily Witt for helpful conversations.

\bibliographystyle{plain}

\end{document}